\newlength{\leftstackrelawd}
\newlength{\leftstackrelbwd}
\def\leftstackrel#1#2{\settowidth{\leftstackrelawd}%
	{${{}^{#1}}$}\settowidth{\leftstackrelbwd}{$#2$}%
	\addtolength{\leftstackrelawd}{-\leftstackrelbwd}%
	\leavevmode\ifthenelse{\lengthtest{\leftstackrelawd>0pt}}%
	{\kern-.5\leftstackrelawd}{}\mathrel{\mathop{#2}\limits^{#1}}}
\numberwithin{equation}{section}
\theoremstyle{plain}
\newtheorem{maintheorem}{Theorem}
\newtheorem{theorem}{Theorem}[section]
\newtheorem{lemma}[theorem]{Lemma}
\newtheorem{proposition}[theorem]{Proposition}
\newtheorem{corollary}[theorem]{Corollary}
\newtheorem{definition}[theorem]{Definition}
\newtheorem{assumption}{Assumption}
\newtheorem{remark}[theorem]{Remark}
\newcommand{\bR}{\mathbb{R}}
\newcommand{\bNp}{\mathbb{N}_+}
\newcommand{\bN}{\mathbb{N}}
\newcommand{\bZ}{\mathbb{Z}}
\newcommand{\bE}{\mathbb{E}}
\newcommand{\bP}{\mathbb{P}}
\newcommand{\calC}{\Omega_s}
\newcommand{\calA}{\mathrm{Crit}_s(f)}
\newcommand{\crit}{\mathrm{Crit}(f)}
\newcommand{\norm}[1]{\left\Vert #1\right\Vert}
\DeclareMathOperator{\ran}{ran}
\title[Global convergence of randomized coordinate gradient descent]{On the global convergence of randomized coordinate gradient descent for non-convex optimization}
\author{Ziang Chen}
\address{(ZC) Department of Mathematics, Duke University, Box 90320, Durham, NC 27708, USA.}
\email{ziang@math.duke.edu}
\author{Yingzhou Li}
\address{(YL) School of Mathematical Sciences, Fudan University, Shanghai 200433, China}
\email{yingzhouli@fudan.edu.cn}
\author{Jianfeng Lu}
\address{(JL) Departments of Mathematics, Physics, and Chemistry, Duke University, Box 90320, Durham, NC 27708, USA.}
\email{jianfeng@math.duke.edu}
\date{\today}
\thanks{This work is supported in part by the National Science Foundation via grants DMS-2012286 and CHE-2037263, and by US Department of Energy via grant DE-SC0019449. Y. Li is partially supported by National Natural Science Foundation of China under Grant No. 12271109. We thank Jonathan Mattingly, Zhe Wang, and Stephen J.{} Wright for helpful discussions.}
\begin{document}

	\begin{abstract}
		In this work, we analyze the global convergence property of coordinate gradient descent with random choice of  coordinates and stepsizes for non-convex optimization problems. Under generic assumptions, we prove that the algorithm iterate will almost surely escape strict saddle points of the objective function. As a result, the algorithm is guaranteed to converge to local minima if all saddle points are strict. Our proof is based on viewing coordinate descent algorithm as a nonlinear random dynamical system and a quantitative finite block analysis of its linearization around saddle points.
	\end{abstract}
	
	\maketitle
	
	\section{Introduction}
	In this paper, we analyze the global convergence of coordinate gradient descent algorithm for smooth but non-convex optimization problem
	\begin{equation}\label{obj-fun}
		\min_{x\in \bR^d} f(x).
	\end{equation}
	More specifically, we consider coordinate gradient descent with random coordinate selection and random stepsizes, as Algorithm~\ref{alg:random CGD}.
	\begin{algorithm}[H]
		\caption{Randomized coordinate gradient descent}
		\label{alg:random CGD}
		\begin{algorithmic}
			\State Initialization: $x_0\in\bR^d$, $t=0$.
			\While {not convergent}
			\State Draw a coordinate $i_t$ uniformly random from $\{1,2,\dots,d\}$.
			\State Draw a stepsize $\alpha_t$ uniformly random in $[\alpha_{\min},\alpha_{\max}]$.
			\State $x_{t+1}\leftarrow x_t- \alpha_t e_{i_t} \partial_{i_t} f(x_t)$.
			\State $t\leftarrow t+1$.
			\EndWhile
		\end{algorithmic}
	\end{algorithm}
	The main result of this paper, Theorem~\ref{thm: main},  is that for any initial guess $x_0$ that is not a strict saddle point of $f$, under some mild conditions, with probability $1$, Algorithm~\ref{alg:random CGD} will escape any strict saddle points; and thus under some additional structural assumption of $f$, the algorithm will globally converge to a local minimum. 
	
	In order to establish the global convergence, we view the algorithm as a random dynamical system and carry out the  analysis based on the theory of random dynamical systems. This might be of separate interest, in particular, to the best of our knowledge, the theory of random dynamical system has not been utilized in analyzing randomized algorithms; while it offers a natural framework to establish long time behavior of such algorithms. Let us now briefly explain the random dynamical system view of the algorithm and our analysis; more details can be found in Section~\ref{sec: main result}. 
	
	\smallskip

	Let $(\Omega,\mathcal{F},\bP)$ be the probability space for all randomness used in the algorithm, such that each $\omega\in\Omega$ is a sequence of coordinates and stepsizes. The iterate of Algorithm~\ref{alg:random CGD} can be described as a random  dynamical system  $x_t=\varphi(t,\omega)x_0$ where $\varphi(t,\omega):\bR^d\rightarrow\bR^d$ is a nonlinear map for any given $t\in\bN$ and $\omega\in\Omega$. 
	
	Consider an isolated stationary point $x^*$ of the dynamical system, which corresponds to a critical point of $f$. Near $x^*$, the dynamical system can be approximated by its linearization: $x_t=\Phi(t,\omega)x_0$, where $\Phi(t,\omega)\in\bR^{d\times d}$. The limiting behavior of the linear dynamical system can be well understood by the celebrated multiplicative ergodic theorem: Under some assumptions, the limit $\Lambda(\omega)=\lim_{t\rightarrow \infty}\left(\Phi(t,\omega)^{\top} \Phi(t,\omega)\right)^{1/2t}$ exists  almost surely. The eigenvalues of the matrix $\Lambda(\omega)$,  $e^{\lambda_1(\omega)}>e^{\lambda_2(\omega)}>\cdots>e^{\lambda_{p(\omega)}(\omega)}$, characterize the long time behavior of the system. 
	In particular, if the largest Lyapunov exponent $\lambda_1(\omega)$ is strictly positive, then if $x_0$ has some non-trivial component in the unstable subspace,  $x_t=\Phi(t,\omega)x_0$ would exponentially diverge from $x^*$. More details of preliminaries of linear random dynamical system can be found in Section~\ref{sec: RDS}.
	
	Intuitively, one expects that the nonlinear dynamical system can be approximated by its linearization around a critical point $x^*$, and would hence escape the strict saddle point, following the linearized system. However, the approximation by linear dynamical system cannot hold for infinite time horizon, due to error accumulation. Therefore, we cannot naively conclude using the multiplicative ergodic theorem and the linear approximation. Instead, a major part of analysis is devoted to establish a quantitative finite block analysis of the behavior of the dynamical system over finite time interval. In particular, we will prove that when the iterate is in a neighborhood of $x^*$, the distance $\norm{x_t - x^*}$ will
	be exponentially amplified for a duration $T$ with high probability. This would then be used to prove that with probability $1$ the nonlinear system will escape strict saddle points. 
	
	\subsection{Related work} 
	Coordinate gradient descent is a popular approach in optimization, see e.g., review articles \cite{Wright-15, shi2016primer}. Advantages of coordinate gradient method include that compared with the full gradient descent, it allows larger stepsize \cite{Nesterov-12} and enjoys faster convergence \cite{Saha-13}, and it is also friendly for parallelization \cite{Liu-15, Richtarik-16}. 
	
	The convergence of coordinate gradient descent has been analyzed in several settings on the property of the objective function and on the strategy of coordinate selection. 
	The understanding of convergence for convex problems is quite complete: For methods with cyclic choice of coordinates, the convergence has been established in \cite{Beck-13, Saha-13, Sun-15} and the worst-case complexity is investigated when the objective function is convex and quadratic in \cite{Sun-19}. For methods with random choice of coordinates, it is shown in \cite{Nesterov-12} that $\bE f(x_t)$ converges to $f^*=\min_{x\in\bR^d}f(x)$ sublinearly in the convex case and linearly in the strongly convex case. Convergence of objective function in high probability has also been established in  \cite{Nesterov-12}. We also refer to \cite{Richtarik-14, Liu-14, Liu-15, Wright-15} for further convergence results for random coordinate selection for convex problems. 
	More recently, convergence of methods with random permutation of coordinates (i.e., a random permutation of the $d$ coordinates is used for every $d$ step of the algorithm) have been analyzed, mostly for the case of quadratic objective functions \cite{Lee-19, Oswald-17, Gurbuzbalaban-20, Wright-20}. It has been an ongoing research direction to compare various coordinate selection strategies in various settings. In addition, in the non-convex and non-smooth setting, the convergence of coordinate/alternating descent methods can be analyzed for tame/semi-algebraic functions with Kurdyka-{\L}ojasiewicz property (see e.g. \cite{attouch2013convergence, attouch2010proximal, bolte2014proximal, boct2020inertial}).
	
	For non-convex objective functions, the global convergence analysis is less developed, as the situation becomes more complicated. Escaping strict saddle points has been a focused research topic in non-convex optimization, motivated by applications in machine learning. It has been established that various first-order algorithms with gradient noise or added randomness to iterates would escape strict saddle points, see e.g., \cite{Ge-15, Levy-16, Jin-17, Jin-18, Jin-19, Guo-20} for works in this direction.
	
	Among previous works for escaping saddle points, perhaps the closest in spirit to our current result are \cite{Lee-16, ONeill-19, Lee-2019, leadeigen-19}, where algorithms without gradient or iterate randomness are studied.
	It is proved in \cite{Lee-16} that for almost every initial guess, the trajectory of the gradient descent algorithm (without any randomness) with constant stepsize would not converge to a strict saddle point. The result has been extended in \cite{Lee-2019} to a broader class of deterministic first-order algorithms, including coordinate gradient descent with cyclic choice of coordinate. The global convergence result for cyclic coordinate gradient descent is also proved in \cite{leadeigen-19} under slightly more relaxed conditions. Similar convergence result is also obtained for heavy-ball method in \cite{ONeill-19}. Let us emphasize that in the case of coordinate algorithms, it is not merely a technical question whether the algorithm can escape the strict saddle points without randomly perturbing gradients or iterates. In fact, one simply cannot employ such random perturbations, e.g., adding a random Gaussian vector to the iterate, since doing so would destroy the coordinate nature of the algorithm. 
	
	The analysis in works 
	\cite{Lee-16,Lee-2019,ONeill-19,leadeigen-19} is based on viewing the algorithm as a deterministic dynamical system, and applying center-stable manifold theorem for deterministic dynamical system \cite{Shub-87}, which characterizes the local behavior near a stationary point of nonlinear dynamical systems. Such a framework obviously does not work for randomized algorithms. 
	To some extent, our analysis can be understood as a natural generalization to the framework of random dynamical systems, which allows us to analyze the long time behavior of randomized algorithms, in particular coordinate gradient descent with random coordinate selection.  
	
	Let us mention that various stable, unstable, and center manifolds theorems have been established in the literature of random dynamical systems, see e.g., \cite{Arnold_RDS, Ruelle-1979, Ruelle-82, Boxler-89-center, Liu-06}. These sample-dependent random manifolds also characterize the local behavior of random dynamical systems. However, as far as we can tell, one cannot simply apply such ``off-the-shelf results'' for the analysis of Algorithm~\ref{alg:random CGD}. Instead, for study of the algorithm, we have to carry out a quantitative finite block analysis for the random dynamical system near the stationary points. Our proof technique is inspired by stability analysis of Lyapunov exponent of random dynamical systems, as in  \cite{Ledrappier-91, Froyland-15}.

	\subsection{Organization} The rest of this paper will be organized as follows. In Section~\ref{sec: RDS}, we review the preliminaries of random dynamical system, for convenience of readers. Our main result is stated in Section~\ref{sec: main result}. The proofs can be found in Section~\ref{sec: proof}.

	\section{Preliminaries of random dynamical systems}\label{sec: RDS}
	
	In this section, we recall basic notions and results of random dynamical systems, for more details, we refer the readers to standard reference, such as \cite{Arnold_RDS}. After introducing the preliminaries in this section, we will define the random dynamical system associated with Algorithm~\ref{alg:random CGD} in Section~\ref{sec: notation}.
	Let $(\Omega,\mathcal{F},\bP)$ be a probability space and let $\mathbb{T}$ be a semigroup with $\mathcal{B}(\mathbb{T})$ being its Borel $\sigma$-algebra. $\mathbb{T}$ serves as the notion of time. In the setting of Algorithm~\ref{alg:random CGD}, we have $\mathbb{T}=\bN$, corresponding to the one-sided discrete time setting. Other possible examples of $\mathbb{T}$ include $\mathbb{T}=\bZ$, $\mathbb{T}=\bR_{\geq 0}$, and $\mathbb{T}=\bR$, with the assumption that $0\in\mathbb{T}$.
	
	Let us first define random dynamical system. As we have mentioned in the introduction, the dynamics starting from $x_0$ can be determined once a sample $\omega\in\Omega$ is fixed. From the viewpoint of random dynamical system, specifying the dynamics of $x$ is equivalent to specifying the dynamics of $\omega$: Suppose at time $0$, the dynamics corresponds to $\omega$, then to prescribe the future dynamics starting from time $t$, we can specify the corresponding $\theta(t) \omega \in \Omega$ for some map $\theta(t): \Omega \to \Omega$. More precisely, we have the following definition of dynamics on $\Omega$. 
	\begin{definition}[Metric dynamical system] A metric dynamical system on a probability space $(\Omega,\mathcal{F},\bP)$ is a family of maps $\{\theta(t):\Omega\rightarrow\Omega\}_{t\in\mathbb{T}}$ satisfying that
		\begin{itemize}
			\item[(i)] The mapping $\mathbb{T}\times\Omega\rightarrow\Omega,\ (t,\omega)\mapsto\theta(t)\omega$ is measurable;
			
			\item[(ii)] It holds that $\theta(0)=\mathrm{Id}_{\Omega}$ and $\theta(t+s)=\theta(t)\circ\theta(s),\ \forall\ s,t\in\mathbb{T}$;
			
			\item[(iii)] $\theta(t)$ is $\bP$-preserving for any $t\in\mathbb{T}$, where we say a map $\theta:\Omega\rightarrow\Omega$ is $\bP$-preserving if 
			\begin{equation*}
				\bP(\theta^{-1}B)=\bP(B),\quad \forall\ B\in\mathcal{F}.
			\end{equation*}
		\end{itemize}
	\end{definition}
	
	The random dynamical system can then be defined as follows. 
	
	\begin{definition}[Random dynamical system]
		Let $(X,\mathcal{F}_X)$ be a measurable space and let $\{\theta(t):\Omega\rightarrow\Omega\}_{t\in\mathbb{T}}$ be a metric dynamical system on $(\Omega,\mathcal{F},\bP)$. Then a random dynamical system on $(X,\mathcal{F}_X)$ 
		over $\{\theta(t)\}_{t\in\mathbb{T}}$ is a measurable map 
		\begin{equation*}
			\begin{split}
				\varphi:\mathbb{T}\times\Omega\times X&\rightarrow\quad X,\\
				(t,\omega,x)\ \  & \mapsto \varphi(t,\omega,x),
			\end{split}
		\end{equation*}
		satisfying the following cocycle property: for any $\omega\in\Omega$, $x\in X$, and $s,t\in \mathbb{T}$, it holds that 
		\begin{equation*}
			\varphi(0,\omega,x)=x,
		\end{equation*}
		and that
		\begin{equation}\label{cocycle}
			\varphi(t+s,\omega,x)=\varphi(t,\theta(s)\omega,\varphi(s,\omega,x)).
		\end{equation}
	\end{definition}
	
	The cocycle property \eqref{cocycle} is a key property of random dynamical system: After time $s$, if we restart the system at $x_s$, the future dynamic corresponds to the sample $\theta(s)\omega$. Note that $\varphi(t,\omega,\cdot)$ is a map on $X$, with some ambiguity of notation, we also use $\varphi(t, \omega)$ to denote this map on $X$ and write $\varphi(t,\omega)x=\varphi(t,\omega,x)$. Then the cocycle property \eqref{cocycle} can be written as 
	\begin{equation*}
		\varphi(t+s,\omega)=\varphi(t,\theta(s)\omega)\circ\varphi(s,\omega).
	\end{equation*}
	
	In this work, we will focus on the one-sided discrete time $\mathbb{T}=\bN$ and $\theta(t)=\theta^t$, where $\theta$ is $\bP$-preserving and $\theta^t$ is the $t$-fold composition of $\theta$. 
	Suppose that $X=\bR^d$ and $A:\Omega\rightarrow\text{GL}(d,\bR)$ is measurable. Consider a linear random dynamical system defined as (we use $\Phi$ for the linear system, while reserving $\varphi$ for nonlinear dynamics considered later)
	\begin{equation*}
		\Phi(t,\omega)=A(\theta^{t-1}\omega)\cdots A(\theta\omega)A(\omega),
	\end{equation*}
	where the right-hand side is the product of a sequences of random matrices. In this setting, the behavior of the linear system $x_t=\Phi(t,\omega)x_0$ is well understood by the celebrated multiplicative ergodic theorem, also known as the Oseledets theorem, which we recall in Theorem~\ref{thm: MET}. Such type of results was first established by V.I.{} Oseledets~\cite{oseledets} and was further developed in many works such as \cite{raghunathan, Ruelle-1979, Walters-93}.
	
	\begin{theorem}[Multiplicative ergodic theorem, {\cite[Theorem 3.4.1]{Arnold_RDS}}]
		\label{thm: MET}Suppose that 
		\begin{equation*}
			\left(\log\norm{A(\cdot)}\right)_+,\ \left(\log\norm{A(\cdot)^{-1}}\right)_+\in L^1(\Omega,\mathcal{F},\bP),
		\end{equation*}
		where we have used the short-hand $a_+:=\max\{a,0\}$. Then there exists an $\theta$-invariant $\widetilde{\Omega}\in \mathcal{F}$ with $\bP(\widetilde{\Omega})=1$, such that the followings hold for any $\omega\in\widetilde{\Omega}$:
		\begin{itemize}
			\item[(i)] It holds that the limit
			\begin{equation}\label{conv: Lamb}
				\Lambda(\omega)=\lim_{t\rightarrow\infty}\left(\Phi(t,\omega)^{\top}\Phi(t,\omega)\right)^{1/2t},
			\end{equation}
			exists and is a positive definite matrix. Here $\Phi(t, \omega)^{\top}$ denotes the transposition of the matrix (as $\Phi(t, \omega)$ is a linear map on $X$).
			\item[(ii)] Suppose  $\Lambda(\omega)$ has $p(\omega)$ distinct eigenvalues, which are ordered as $e^{\lambda_1(\omega)}>e^{\lambda_2(\omega)}>\cdots> e^{\lambda_{p(\omega)}(\omega)}$. Denote $V_i(\omega)$ the corresponding eigenspace, with dimension $d_i(\omega)$, for  $i=1,2,\dots,p(\omega)$. Then the functions $p(\cdot)$, $\lambda_i(\cdot)$, and $d_i(\cdot)$, $i=1,2,\dots,p(\cdot)$, are all measurable and  $\theta$-invariant on $\widetilde{\Omega}$.
			\item[(iii)] Set $W_i(\omega)=\bigoplus_{j\geq i}V_j(\omega),\ i=1,2,\dots,p(\omega)$,  
			and $W_{p(\omega)+1}(\omega)=\{0\}$. Then it holds that
			\begin{equation}\label{conv: Lya-exp}
				\lim_{t\rightarrow\infty}\frac{1}{t}\log\norm{\Phi(t,\omega)x}=\lambda_i(\omega),\quad\forall\ x\in W_i(\omega)\backslash W_{i+1}(\omega),
			\end{equation}  
			for $i=1,2,\dots,p(\omega)$. The maps $V(\cdot)$ and $W(\cdot)$ from $\widetilde{\Omega}$ to the Grassmannian manifold are measurable. 
			\item[(iv)] It holds that
			\begin{equation*}
				W_i(\theta\omega)=A(\omega)W_i(\omega).
			\end{equation*}
			\item[(v)] When $(\Omega,\mathcal{F},\bP,\theta)$ is ergodic, i.e., every $B\in\mathcal{F}$ with $\theta^{-1}B=B$ satisfies $\bP(B)=0$ or $\bP(B)=1$, the functions  
			$p(\cdot)$, $\lambda_i(\cdot)$, and $d_i(\cdot)$, $i=1,2,\dots,p(\cdot)$, are constant on $\widetilde{\Omega}$.
		\end{itemize}
	\end{theorem}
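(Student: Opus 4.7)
The plan is to combine Kingman's subadditive ergodic theorem with exterior algebra, following the classical strategy of Oseledets and Raghunathan. First, I would observe that for each $k\in\{1,\dots,d\}$, the map $\omega\mapsto \wedge^k A(\omega)$ induces a linear cocycle $\wedge^k\Phi(t,\omega)=\wedge^k A(\theta^{t-1}\omega)\cdots \wedge^k A(\omega)$, and the sequence $g_t^{(k)}(\omega):=\log\norm{\wedge^k\Phi(t,\omega)}$ is subadditive under $\theta$, i.e.\ $g_{t+s}^{(k)}(\omega)\le g_t^{(k)}(\theta^s\omega)+g_s^{(k)}(\omega)$. The hypothesis $(\log\norm{A})_+\in L^1$ yields $(g_1^{(k)})_+\in L^1$, so Kingman's theorem gives the almost sure existence and $\theta$-invariance of
\begin{equation*}
\mu_k(\omega)=\lim_{t\to\infty}\tfrac{1}{t}\log\norm{\wedge^k\Phi(t,\omega)}.
\end{equation*}
Since $\norm{\wedge^k\Phi(t,\omega)}$ equals the product $\sigma_1(t)\cdots\sigma_k(t)$ of the top $k$ singular values of $\Phi(t,\omega)$, setting $\tilde\lambda_k:=\mu_k-\mu_{k-1}$ (with $\mu_0=0$) gives $\tfrac{1}{t}\log\sigma_k(t)\to\tilde\lambda_k$ almost surely; listed without repetition this produces the distinct exponents $\lambda_1>\lambda_2>\cdots>\lambda_p$ with multiplicities $d_i$.

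Next, I would construct the limit matrix $\Lambda(\omega)$ and the filtration. The eigenvalues of $(\Phi(t,\omega)^\top\Phi(t,\omega))^{1/2t}$ are exactly $\sigma_i(t)^{1/t}$, which converge to $e^{\tilde\lambda_i}$ by the previous step, so the spectrum of the candidate limit is determined. For the eigenspaces, I would define the slow subspaces intrinsically by
\begin{equation*}
W_i(\omega):=\Bigl\{x\in\bR^d:\limsup_{t\to\infty}\tfrac{1}{t}\log\norm{\Phi(t,\omega)x}\le\lambda_i\Bigr\},
\end{equation*}
and prove by downward induction on $i$ that $W_i(\omega)$ is a linear subspace of dimension $\sum_{j\ge i}d_j$ on which the $\limsup$ is actually a limit equal to $\lambda_i$ off $W_{i+1}(\omega)$; here the second integrability hypothesis $(\log\norm{A^{-1}})_+\in L^1$ is essential to rule out extra contraction and to control backward orbits. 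Taking $V_i(\omega)$ to be the orthogonal complement of $W_{i+1}(\omega)$ in $W_i(\omega)$ then produces a candidate spectral decomposition for $\Lambda(\omega)$; a quantitative gap estimate between consecutive $\mu_k$'s at indices where $\tilde\lambda_k>\tilde\lambda_{k+1}$, combined with standard perturbation bounds for symmetric matrices, shows that the corresponding eigenspaces of $(\Phi(t,\omega)^\top\Phi(t,\omega))^{1/2t}$ converge in the Grassmannian to $V_i(\omega)$, and hence that the full matrix limit \eqref{conv: Lamb} exists and is positive definite.

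Finally, the equivariance $W_i(\theta\omega)=A(\omega)W_i(\omega)$ follows directly from the cocycle identity $\Phi(t,\theta\omega)A(\omega)=\Phi(t+1,\omega)$ together with the defining property of $W_i$, and the measurability of $p,\lambda_i,d_i,V_i,W_i$ is inherited from the almost sure limits used in their construction. In the ergodic case, every $\theta$-invariant measurable function is almost surely constant, giving part (v). The main obstacle, in my view, is \emph{not} the existence of the Lyapunov exponents (which is a fairly mechanical application of Kingman), but the convergence of the eigenspaces of $(\Phi(t,\omega)^\top\Phi(t,\omega))^{1/2t}$ to a deterministic-in-$\omega$ decomposition: this requires turning the qualitative separation $\tilde\lambda_k>\tilde\lambda_{k+1}$ into a quantitative control of the angles between neighboring singular subspaces along the orbit, which is the technical core of the Oseledets--Raghunathan argument and the step most resistant to a short proof.
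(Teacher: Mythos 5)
The paper does not prove this theorem: it is imported verbatim as Theorem 3.4.1 from Arnold's book \cite{Arnold_RDS}, and the authors simply verify its hypotheses for the cocycle $A^H$ arising from Algorithm~\ref{alg:random CGD}. So there is no ``paper proof'' to compare against. What you have written is a reasonable high-level roadmap of the classical Raghunathan-style proof of the one-sided Oseledets theorem, and the overall architecture (Kingman on $\log\norm{\wedge^k\Phi}$ to get the exponents; slow filtration $W_i$ defined by $\limsup$ growth; gap plus perturbation to get convergence of singular subspaces and hence of $(\Phi^\top\Phi)^{1/2t}$; equivariance from the cocycle identity; ergodicity forcing constants) is the standard one, and you correctly identify the subspace-convergence step as the real work.

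Two small cautions if this sketch were to be expanded into an actual proof. First, the role of $(\log\norm{A^{-1}})_+\in L^1$ in the one-sided discrete-time setting is not about ``backward orbits'' (there are none for $\mathbb{T}=\bN$); rather, it is what bounds the bottom exponent $\mu_d$ away from $-\infty$ so that $\Lambda(\omega)$ is genuinely positive definite (hence invertible) and not merely positive semidefinite. Second, defining $V_i(\omega)$ as the orthogonal complement of $W_{i+1}(\omega)$ inside $W_i(\omega)$ and separately claiming that the eigenspaces of $(\Phi^\top\Phi)^{1/2t}$ converge ``to $V_i(\omega)$'' presupposes the very identification you are trying to prove: a priori the limiting singular subspaces and the orthogonal complements of the $\limsup$-filtration are two different objects, and showing they coincide is precisely the content of the Grassmannian convergence argument (the Raghunathan lemma and its consequences). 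So the sentence should be read as a statement of the goal, not a derivation; as a proof it is circular until the uniform angle/gap control is actually established. With that understood, your account is consistent with the cited source.
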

	
	In Theorem~\ref{thm: MET}, $\lambda_1(\omega)>\lambda_2(\omega)>\cdots>\lambda_{p(\omega)}(\omega)$ are known as Lyapunov exponents and $\{0\}\subseteq W_{p(\omega)}(\omega)\subseteq \cdots\subseteq W_1(\omega)\subseteq \bR^d$ is the Oseledets filtration. We can see from the above theorem that both the Lyapunov exponents and the Oseledets filtration are  $A$-forward invariant. 
	
	The Lyapunov exponents describe the asymptotic growth rate of $\norm{\Phi(t,\omega)x}$ as $t\rightarrow\infty$. More specifically, \eqref{conv: Lya-exp} implies that when $x\in W_i(\omega)\backslash W_{i+1}(\omega)$, for any $\epsilon>0$, there exists some $T>0$, such that
	\begin{equation*}
		e^{t(\lambda_i(\omega)-\epsilon)}\leq\norm{\Phi(t,\omega)x}\leq e^{t(\lambda_i(\omega)+\epsilon)},
	\end{equation*}
	holds for any $t>T$. The subspaces spanned by eigenvectors of $\Lambda(\omega)$ corresponding to eigenvalues smaller than, equal to, and greater than $0$ are the stable subspace, center subspace, and unstable subspace, respectively. The stable and unstable subspaces correspond to exponential convergence and exponential divergence, respectively. When starting from the center subspace, we would get some sub-exponential behavior.
	
	The multiplicative ergodic theorem also generalizes to continuous time and two-sided time. We refer interested readers to \cite[Theorem 3.4.1, Theorem 3.4.11]{Arnold_RDS} for details.
	
	The stable, unstable, and center subspaces can be generalized to stable, unstable, and center manifolds when considering nonlinear systems, see e.g., \cite{Arnold_RDS, Ruelle-1979, Ruelle-82, Boxler-89-center, Weigu-Sternberg, Liu-06, Lian-10, Li-13, Guo-16}. These manifolds  play  similar roles in characterizing the local behavior of nonlinear random dynamical systems, as the subspaces for linear random dynamical systems. In particular, Hartman–Grobman theorem establishes the topological conjugacy between a nonlinear system and its linearization \cite{Wanner-95}. There are also other conjugacy results for random dynamical systems, see e.g., \cite{Weigu-Sternberg, Weigu-05, Weigu-08, Weigu-16}.

	\section{Main results}\label{sec: main result}
	\subsection{Setup of the random dynamical system} \label{sec: notation}
	
	Let us first specify the random dynamical system corresponding to the Algorithm~\ref{alg:random CGD}. 
	\begin{itemize}[wide]
		\item{\emph{Probability space}.} For each $t \in \bN$, denote  $(\Omega_t,\Sigma_t,\bP_t)$ the usual probability space for the distribution $\mathcal{U}_{\{1,2,\dots,n\}}\times \mathcal{U}_{[\alpha_{\min},\alpha_{\max}]}$, where $\mathcal{U}_{\{1,2,\dots,n\}}$ and $\mathcal{U}_{[\alpha_{\min},\alpha_{\max}]}$ are the uniform distributions on the set $\{1,2,\dots,n\}$ and interval $[\alpha_{\min},\alpha_{\max}]$ respectively. 
		Let $(\Omega,\mathcal{F},\bP)$ be the product probability space of all $(\Omega_t,\Sigma_t,\bP_t),\ t\in\bN$. Denote $\pi_t$ as the projection from $(\Omega,\mathcal{F},\bP)$ onto $(\Omega_t,\Sigma_t,\bP_t), \ t \in \bN$. Thus, a sample $\omega\in\Omega$ can be represented as a sequence  $((i_0,\alpha_0),(i_1,\alpha_1),\dots)$, where $(i_t,\alpha_t)=\pi_t(\omega),\ t\in\bN$. Let $\{\mathcal{F}_t\}_{t\in \bN}$ be the filtration defined by $$\mathcal{F}_t=\sigma\left\{(B_0\times\cdots\times B_t)\times \left(\prod_{j> t}\Omega_j\right):B_i\in \Sigma_i,\ i=0,1,\dots,t\right\}.$$
		
		\item{\emph{Metric dynamical system}.}
		The metric dynamical system on $\Omega$ is constructed by the (left) shifting operator $\tau:\Omega\rightarrow\Omega$ defined as 
		\begin{equation*}
			\tau(\omega)=\tau(\pi_0(\omega),\pi_1(\omega),\cdots) := (\pi_1(\omega),\pi_2(\omega),\cdots),
		\end{equation*}
		which is clearly measurable and $\bP$-preserving. The metric dynamical system is then given by $\theta(t)=\tau^t$ for $t\in\bN$. 
		
		\item{\emph{Random dynamical system}.}
		For any $\omega\in\Omega$ and $t\in\bN$, we define $\phi(\omega)$ to be a (nonlinear) map on $\bR^d$ as 
		\begin{equation*}
			\begin{split}
				\phi(\omega):\bR^d&\rightarrow\quad\quad\quad \bR^d\\
				x\ &\mapsto x-\alpha e_i e_i^{\top} \nabla f(x),
			\end{split}
		\end{equation*}
		where $(i,\alpha)=\pi_0(\omega)$ is the first pair/element in the sequence $\omega$, and we define the map $\varphi(t,\omega)$ via 
		\begin{equation*}
			\varphi(t,\omega)=\phi(\tau^{t-1}\omega)\circ\cdots \circ \phi(\tau\omega)\circ \phi(\omega), \qquad \text{for } t \geq 1,
		\end{equation*}
		while $\varphi(0, \omega)$ is the identity operator. It is clear that $\varphi(t,\omega)$ satisfies the cocycle property \eqref{cocycle} and hence defines a random dynamical system on $X=\bR^d$ over $\{\tau^t\}_{t\in\bN}$.
		The iterate of Algorithm~\ref{alg:random CGD} follows the random dynamical system as
		\begin{equation*}
			x_t=\phi(\tau^{t-1}\omega)x_{t-1}=\cdots=\phi(\tau^{t-1}\omega)\circ\cdots \circ \phi(\tau\omega)\circ \phi(\omega)x_0=\varphi(t,\omega)x_0.
		\end{equation*}
		It can be seen that $\{x_t\}_{t\in\bN}$ is $\{\mathcal{F}_t\}$-predictable, i.e., $x_t$ is $\mathcal{F}_{t-1}$-measurable for any $t\in\bNp$, since $x_t$ is determined by samples $(i_0,\alpha_0),(i_1,\alpha_1),\dots,(i_{t-1},\alpha_{t-1})$.
	\end{itemize}
	
	\smallskip 
	
	In our analysis, we will use linearization of the dynamical system $\varphi(t,\omega)$ at a critical point $x^*$ of $f$. Without loss of generality, we assume $x^*=0$; otherwise we consider the system with state being $x-x^*$. The resulting linear system, which depends on $H=\nabla^2 f(x^*)=(H_{ij})_{1\leq i,j\leq d}$, is given by (here and in the sequel, we use the superscript $H$ to indicate dependence on the matrix)
	\begin{equation}\label{linear: Phi}
		\Phi^H(t,\omega)=A^H(\tau^{t-1}\omega)\cdots A^H(\tau\omega)A^H(\omega),
	\end{equation}
	where 
	\begin{equation}\label{linear: A}
		A^H(\omega)=I-\alpha e_i e_i^{\top} H,\quad (i,\alpha)=\pi_0(\omega). 
	\end{equation}
	Note that $A^H(\cdot)$ is bounded in $\Omega$. 
	We know that $\left(\log\norm{A^H(\cdot)}\right)_+$ is integrable. When $\alpha< 1/|H_{ii}|$, 
	the matrix $A^H(\omega)=I-\alpha e_i e_i^{\top} H$ is invertable, and the inverse is given explicitly by applying the Sherman-Morrison formula:
	\begin{equation}\label{inverse-AH}
		\begin{split}
			A^H(\omega)^{-1}&=\left(I-\alpha e_i e_i^{\top} H\right)^{-1} = I + \frac{\alpha e_i e_i^{\top} H}{1 - \alpha H_{ii}}. 
		\end{split}
	\end{equation}
	In particular, we have 
	\begin{equation}
		\norm{A^H(\omega)^{-1}} \leq 1 + \frac{\alpha \norm{H}}{1 - \alpha |H_{ii}|}. 
	\end{equation}
	Thus, if we take the maximal stepsize $\alpha_{\max}$ such that $\alpha_{\max} < 1/\max_{1\leq i\leq d}|H_{ii}|$,  $\norm{A^H(\cdot)^{-1}}$ is bounded in $\Omega$, and as a result $\left(\log\norm{A^H(\cdot)^{-1}}\right)_+$ is also integrable. 
	Therefore, the assumptions of Theorem~\ref{thm: MET} hold. The shifting operator $\tau$ is ergodic on $(\Omega,\mathcal{F},\bP)$ by Kolmogorov's $0$--$1$ law. Then Theorem~\ref{thm: MET} applies for $\theta=\tau$ with $p^H(\cdot)$, $\lambda_i^H(\cdot)$, and $d_i^H(\cdot)$ all being a.e. constant. For any $\omega\in\widetilde{\Omega}$ that is the set in Theorem~\ref{thm: MET} satisfying $\bP(\widetilde{\Omega})=1$, 
	we denote
	\begin{equation}\label{W_pm}
		W_+^H(\omega)=\bigoplus_{\lambda_i>0}V_i^H(\omega),\ \text{and}\ W_-^H(\omega)=\bigoplus_{\lambda_i\leq 0}V_i^H(\omega).
	\end{equation}
	Then the following invariant property holds 
	\begin{equation*}
		W_-^H(\tau\omega)=A^H(\omega)W_-^H(\omega).
	\end{equation*}
	Note that $W_-^H(\omega)$ works as a center-stable subspace. That is, for any $x\in W_-^H(\omega)$ and any $\epsilon>0$, it holds that $\norm{\Phi(t,\omega)x}\leq e^{t\epsilon}$, for sufficiently large $t$, and for $x\notin W_-^H(\omega)$, $\norm{\Phi(t,\omega)x}$ grows exponentially as $t\rightarrow\infty$ with rate greater than $\min_{\lambda_i>0}\lambda_i -\epsilon$ for any $\epsilon>0$.

	\subsection{Assumptions}
	
	In this section, we specify the assumptions of the objective function $f$ in this paper. 
	The first is a standard smoothness assumption of $f$. 
	\begin{assumption}\label{Assump: Hess_bdd} $f \in C^2(\bR^d)$ and the Hessian $\nabla^2 f$ is uniformly bounded, i.e., there exists $M>0$ such that $\norm{\nabla^2 f(x)}\leq M$ for all $x\in\bR^d$.
	\end{assumption}
	
	An optimization algorithm is expected to converge, under some reasonable assumptions, to a critical point of $f$ where the gradient vanishes. Our aim is to further characterize the possible limits of the algorithm iterates. For this purpose, we distinguish $\calA$, the set of all strict saddle points (including local maxima with non-degenerate Hessian) of $f$:
	\begin{equation*}
		\calA := \{x\in\bR^d:\nabla f(x)=0,\
		\lambda_{\min}(\nabla^2 f(x))<0\},
	\end{equation*}
	where we use the subscript $s$ to emphasize that it is strict. 
	Due to the presence of negative eigenvalue of Hessian, if we were considering the gradient dynamics near the critical point, the saddle point would be an unstable equilibrium. Our first result is that this instability also occurs in the linear random dynamical system $\Phi^H(t,\omega)$ where $H=\nabla^2 f(x^*)$. In other words, the dimension of $W^H_+(\omega)$ defined in \eqref{W_pm} is greater than $0$. 
	While this would mainly serve as a preliminary step for our analysis of the nonlinear dynamics, the conclusion by itself might be of interest, and is stated as follows. The proof will be deferred to Section~\ref{sec: positive-Lya}. 
	
	\begin{proposition}\label{prop: posi-lamb1}
		Let $H$ have a negative eigenvalue and
		$0<\alpha_{\min}<\alpha_{\max}<1/\max_{1\leq i\leq d}|H_{ii}|$, then the largest Lyapunov exponent of $\Phi^H(t,\omega)$ is positive. 
	\end{proposition}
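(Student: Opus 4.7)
The plan is to exhibit a direction in which $\Phi^H(t,\omega)$ amplifies almost surely. Let $u$ be a unit eigenvector of $H$ for a negative eigenvalue $\mu < 0$, and set $v_t := \Phi^H(t,\omega)u$. The sequence $v_t$ is exactly the randomized CGD iterate applied to the quadratic $f(x) = \tfrac{1}{2}x^\top H x$ starting at $u$, and a direct computation gives
\begin{equation*}
f(v_{t+1}) - f(v_t) \,=\, -\alpha_t (Hv_t)_{i_t}^{2}\bigl(1 - \tfrac{\alpha_t}{2} H_{i_t i_t}\bigr).
\end{equation*}
The stepsize assumption $\alpha_{\max} < 1/\max_i |H_{ii}|$ forces the parenthesized factor to lie in $(\tfrac12, \tfrac32)$, so $\{f(v_t)\}$ is monotonically non-increasing with $f(v_t) \leq f(u) = \mu/2 < 0$. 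Equivalently, the cone $\mathcal{K} = \{v \neq 0 : v^\top H v < 0\}$ is forward-invariant under every $A^H(\omega)$, and therefore its projective image $\overline{\mathcal{K}} \subset \mathbb{P}^{d-1}$ is closed, compact, invariant under the projective Markov chain $[w]\mapsto[A^H(\omega)w]$, and nonempty because $H$ has a negative eigenvalue.

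Next, I would invoke the Furstenberg--Khasminskii variational principle: any probability measure $\nu$ on $\Omega \times \mathbb{P}^{d-1}$ invariant under the skew-product $(\omega,[v]) \mapsto (\tau\omega,[A^H(\omega)v])$ yields a Lyapunov exponent $\gamma(\nu) = \int \log\bigl(\|A^H(\omega)v\|/\|v\|\bigr)\, d\nu$ in the spectrum of $\Phi^H$, with $\lambda_1^H$ given as the supremum of $\gamma(\nu)$ over ergodic invariant $\nu$. Krylov--Bogolyubov on the compact invariant set $\Omega \times \overline{\mathcal{K}}$ produces such a $\nu$ supported in $\overline{\mathcal{K}}$. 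Using the identity $\|A^H(\omega)v\|^2 = \|v\|^2 + \alpha^2 (Hv)_i^2 - 2\alpha(Hv)_i v_i$ and averaging over $(i,\alpha)$ gives, for each unit $w$,
\begin{equation*}
\bE_\omega\bigl[\|A^H(\omega)w\|^2\bigr] - 1 \,=\, \tfrac{\bE[\alpha^2]}{d}\|Hw\|^2 - \tfrac{2\bE[\alpha]}{d}\, w^\top H w,
\end{equation*}
which is strictly positive for $w$ in the interior of $\overline{\mathcal{K}}$ (where $w^\top H w < 0$).

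The main obstacle is converting this second-moment positivity into strict positivity of $\gamma(\nu)$, since Jensen's inequality applied to the concave logarithm goes the wrong way. I would handle this by uniformly bounding $|\alpha(Hw)_i(\alpha(Hw)_i - 2w_i)|$ over $w \in S^{d-1}$ and $\omega$ via the stepsize constraint, yielding a quantitative expansion $\log(1+x) \geq x - C_0 x^2$ with an explicit $C_0$ depending only on $\alpha_{\max}$ and $\|H\|$; integrating this bound against $\nu$ leaves a strictly positive linear contribution on the interior of $\overline{\mathcal{K}}$. The remaining delicate point is to rule out $\nu$ concentrating on the boundary $\partial \overline{\mathcal{K}} = \{[w] : w^\top Hw = 0\}$ (which contains $\ker H \cap \mathbb{P}^{d-1}$), where the linear term vanishes. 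This is resolved by observing that $\mathcal{K} \cap \ker H = \emptyset$ and that, for any $w \in \partial \overline{\mathcal{K}} \setminus \ker H$, the iterate $[A^H(\omega)w]$ lies strictly in the interior of $\overline{\mathcal{K}}$ with probability at least $1/d$ (namely, whenever the random coordinate satisfies $(Hw)_{i_t} \neq 0$, which must hold for at least one $i$ since $Hw \neq 0$). Consequently, initializing the Krylov--Bogolyubov construction from $\delta_{[u]}$ produces an invariant measure charging the interior of $\overline{\mathcal{K}}$, forcing $\gamma(\nu) > 0$ and hence $\lambda_1^H > 0$.
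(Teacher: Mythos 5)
Your proposal takes a genuinely different route from the paper: instead of the paper's direct energy argument (which tracks the geometric decrease of $f^H(x_t)=\tfrac12 x_t^\top H x_t$ along coordinate blocks that cover all $d$ indices, via Proposition~4.1 and Lemma~4.2, and then invokes the strong law of large numbers), you appeal to the Furstenberg--Khasminskii variational characterization of the top exponent via skew-product invariant measures on $\Omega\times\mathbb{P}^{d-1}$, combined with Krylov--Bogolyubov on the forward-invariant projective cone $\overline{\mathcal{K}}$. The cone-invariance observation and the one-step second-moment identity are correct, and the general strategy is sensible. What the paper's approach buys is a completely elementary argument that avoids any invariant-measure machinery and, in particular, handles degenerate $H$ cleanly through the decomposition $x_0 = y^* + y_0$ with $y^*\in\ker H$ and $y_0\in\ran H$.

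There are, however, two genuine gaps. First, the step converting second-moment positivity into $\gamma(\nu)>0$ via $\log(1+x)\geq x-C_0x^2$ does not close as stated. Writing $x=\alpha (Hw)_i\bigl(\alpha(Hw)_i-2w_i\bigr)$ for unit $w$, the constant $C_0$ must be chosen to control $\log(1+x)$ over the full admissible range of $x$ (which extends down toward $(1-M\alpha_{\max})^2-1$), and then the quadratic term $C_0\,\bE[x^2]$ is of the same order in $\|Hw\|^2$ as $\bE[x]$ and is not obviously dominated by it; nothing in the argument forces $\bE[x]-C_0\bE[x^2]>0$ uniformly on the interior of $\overline{\mathcal{K}}$. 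The inequality $\bE\bigl[\log\|A^H(\omega)w\|\bigr]>0$ may well be true there, but this Taylor bound does not establish it. Second, and more seriously, the proposition does not assume $H$ is nondegenerate, and $\ker H\cap\mathbb{P}^{d-1}$ is a nonempty closed invariant subset of $\partial\overline{\mathcal{K}}$ on which every $A^H(\omega)$ acts as the identity, so any invariant measure concentrated there has $\gamma(\nu)=0$. Your final sentence asserts, without justification, that the Krylov--Bogolyubov limit initialized at $\delta_{[u]}$ charges the interior of $\overline{\mathcal{K}}$; the observation that mass leaves $\partial\overline{\mathcal{K}}\setminus\ker H$ with probability at least $1/d$ does not preclude the Ces\`aro averages from leaking onto $\ker H$ (which is consistent with $f^H(v_t)\leq f^H(u)<0$ so long as $\|v_t\|\to\infty$, possibly subexponentially). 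Without an argument analogous to the paper's $\ker H$-versus-$\ran H$ splitting, the proposal does not rule out the degenerate case and hence does not prove the proposition as stated.
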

	
	Our goal is to generalize such results to the nonlinear dynamics near strict saddle points of $f$, for which, we would require two additional assumptions, as the following. 
	\begin{assumption}\label{Assump: non-degenerate}
		For every $x^*\in \calA$, $\nabla^2 f(x^*)$ is non-degenerate, i.e., $x^*$ is a non-degenerate critical point of $f$ in the sense that any eigenvalue of $\nabla^2 f(x^*)$ is nonzero.
	\end{assumption}
	Assumption~\ref{Assump: non-degenerate} is also a standard assumption, which in particular  guarantees that each strict saddle point is isolated, due to the non-degenerate Hessian. For each strict saddle point, Proposition~\ref{prop: posi-lamb1} guarantees that the corresponding unstable subspace $W^H_+(\omega)$ is non-trivial (has dimension at least $1$). We would in fact require a stronger technical assumption on its structure. 
	\begin{assumption}\label{Assump: non-zero-proj}
		For every $x^*\in \calA$, it holds that $\mathcal{P}_+^H(\omega) e_i\neq 0$, for every $i\in\{1,2,\dots,d\}$ and almost every $\omega\in\Omega$, where $\mathcal{P}_+^H(\omega)$ is the orthogonal projection onto $W_+^H(\omega)$ with $H=\nabla^2 f(x^*)$.
	\end{assumption}
	We expect that Assumption~\ref{Assump: non-zero-proj} holds generically. We also show in Appendix~\ref{sec: ex-asp3}, Assumption~\ref{Assump: non-zero-proj} can be verified when $H$ has no zero off-diagonal elements (and $W^H_+(\omega)$ is not trivial). However, there exist cases that Assumption~\ref{Assump: non-zero-proj} might not hold. One example is $H=\nabla^2 f(x^*)=\text{diag}(H_1,H_2)$ where $H_1\in\bR^{d_1\times d_1}$ only has positive eigenvalues and $H_2\in\bR^{d_2\times d_2}$ only has negative eigenvalues, which implies that $W_-^H(\omega)=\text{span}\{e_i:1\leq i\leq d_1\}$ and $W_+^H(\omega)=\text{span}\{e_i:d_1+1\leq i\leq d\}$.
	
	We also remark that Assumption~\ref{Assump: Hess_bdd} is essential in our framework, since the linearized system is defined using the Hessian matrix. Analysis of randomized coordinate method for non-smooth optimization problems requires new techniques and deserves future research.

	\subsection{Main results}
	
	Given an initial guess $x_0$, the behavior of the algorithm, in particular, the limit of $x_t$, depends on the particular sample $\omega \in \Omega$.
	For any $x^*\in\calA$, we denote the set of all $\omega$ such that the algorithm starting at $x_0$ would converge to $x^*$:
	\begin{equation*}
		\calC(x^*,x_0) :=\left\{\omega\in\Omega:\lim_{t\rightarrow\infty}x_t=\lim_{t\rightarrow\infty}\varphi(t,\omega)x_0=x^*\right\}.
	\end{equation*}
	We further define the set $\calC(\calA,x_0)$ as the union of all $\calC(x^*,x_0)$ over $x^*\in\calA$,
	\begin{equation*}
		\calC(\calA,x_0):=\bigcup_{x^*\in\calA}\calC(x^*,x_0).
	\end{equation*}
	Thus, if $\omega \notin \calC(\calA,x_0)$, the limit $\lim_{t \to \infty} x_t$, if exists, will not be one of the strict saddle points. Our main result in this paper proves that the set is of measure zero, i.e., for any initial guess $x_0$ that is not a strict saddle point, with probability $1$, Algorithm~\ref{alg:random CGD} will not converge to a strict saddle point.
	\begin{maintheorem}\label{thm: main}
		Suppose that Assumptions~\ref{Assump: Hess_bdd}, \ref{Assump: non-degenerate}, and \ref{Assump: non-zero-proj} hold and that $0 < \alpha_{\min} < \alpha_{\max} < 1/M$, then  
		for any $x_0\in \bR^d\backslash\calA$, it holds that
		\begin{equation*}
			\bP(\calC(\calA,x_0))=0.
		\end{equation*}
	\end{maintheorem}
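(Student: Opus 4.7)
The plan is to argue as follows. By Assumption~\ref{Assump: non-degenerate} every strict saddle is isolated, so $\calA$ is at most countable and the result reduces via countable subadditivity to showing $\bP(\calC(x^*,x_0))=0$ for each fixed $x^*\in\calA$; by translating coordinates I may take $x^*=0$. Writing $H=\nabla^2 f(0)$, which has a negative eigenvalue by the strict-saddle condition, Proposition~\ref{prop: posi-lamb1} gives $\lambda_1^H>0$ for the linearized cocycle $\Phi^H$.

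The heart of the proof is a \emph{quantitative finite-block amplification lemma} for the nonlinear cocycle: there exist $T\in\bNp$, $\delta>0$, $\rho>1$, $c>0$ and $p\in(0,1]$ such that for every $y\in\bR^d$ with $0<\norm{y}\leq\delta$,
\begin{equation*}
\bP\bigl(\norm{\varphi(T,\omega)y}\geq \rho\norm{y}\bigr)\geq p,\qquad \text{and}\qquad \norm{\varphi(T,\omega)y}\geq c\norm{y}\ \text{for all }\omega\in\Omega,
\end{equation*}
with the additional calibration $p\log\rho+(1-p)\log c>0$. To establish it I would split $\varphi(T,\omega)y=\Phi^H(T,\omega)y+R(T,\omega,y)$ with $\norm{R(T,\omega,y)}=O(\norm{y}^2)$ uniformly in $\omega$ by Taylor expansion and Assumption~\ref{Assump: Hess_bdd}. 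The deterministic lower bound on the linear part follows from the explicit inverse formula \eqref{inverse-AH} for $A^H(\omega)^{-1}$. For the probabilistic amplification, Theorem~\ref{thm: MET} combined with $\lambda_1^H>0$ yields, for $T$ chosen sufficiently large, a set of $\omega$'s of measure at least $p$ on which $\norm{\Phi^H(T,\omega)v}\geq e^{T(\lambda_1^H-\epsilon)}\norm{v}$ for every $v$ having a non-negligible component along $W_+^H(\omega)$. Assumption~\ref{Assump: non-zero-proj} then enters crucially: by absorbing a few initial random coordinate updates into the block, the fact that each $\mathcal{P}_+^H(\omega)e_{i_t}$ is non-zero almost surely forces any starting $y$ to acquire a uniformly non-trivial unstable component, giving the required uniformity in $y$. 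Choosing $\delta$ small enough then makes the quadratic remainder $R$ negligible against the exponential amplification of the linear part.

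Given the lemma, I would close the argument by a block-wise drift estimate of length $T$. On the event $\calC(0,x_0)$ the iterate eventually stays in $B_\delta(0)$, and since each $\alpha_t$ has a continuous distribution, almost surely $x_t\neq 0$ for all $t$. By the cocycle property, $x_{(k+1)T}=\varphi(T,\tau^{kT}\omega)x_{kT}$, and the first $T$ coordinates of $\tau^{kT}\omega$ are independent of $\mathcal{F}_{kT}$, so applying the lemma pointwise at $y=x_{kT}$ yields
\begin{equation*}
\bP\bigl(\norm{x_{(k+1)T}}\geq \rho\norm{x_{kT}}\,\bigm|\,\mathcal{F}_{kT}\bigr)\geq p\qquad\text{on }\{0<\norm{x_{kT}}\leq\delta\}.
\end{equation*}
Setting $r_k=\log\norm{x_{kT}}$, L\'evy's conditional Borel--Cantelli lemma together with the deterministic lower bound $r_{k+1}-r_k\geq\log c$ produces $\liminf_k (r_k-r_0)/k\geq p\log\rho+(1-p)\log c>0$ almost surely on the event that $\norm{x_{kT}}\leq\delta$ eventually, contradicting $r_k\to-\infty$. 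Hence $\bP(\calC(0,x_0))=0$, and summing over the countable set $\calA$ yields $\bP(\calC(\calA,x_0))=0$.

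The main obstacle is the finite-block amplification lemma, and specifically its uniformity in the starting vector $y$. Theorem~\ref{thm: MET} is purely asymptotic and the Lyapunov exponents are attained only on the direction-dependent Oseledets subspaces, so turning it into a finite-$T$ probability lower bound that is uniform over all small $y$ demands a genuinely quantitative concentration argument, and it requires Assumption~\ref{Assump: non-zero-proj} precisely to rule out that random coordinate updates could preserve the center-stable subspace $W_-^H(\omega)$. This is the ``quantitative finite block analysis'' the introduction highlights as the main new technical input, and is the step that prevents a direct appeal to off-the-shelf stable-manifold results for random dynamical systems.
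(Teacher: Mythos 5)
Your proposal follows essentially the same route as the paper: reduce to a single strict saddle $x^*$ by countability (isolatedness from Assumption~\ref{Assump: non-degenerate}), prove a quantitative finite-block amplification estimate (the paper's Theorem~\ref{thm: local-growth}) that combines linearization error control with a high-probability lower bound on the unstable component using Assumption~\ref{Assump: non-zero-proj} and the random stepsize, and then close by a block-wise drift/large-deviations argument (the paper uses a binomial tail estimate via the weak law of large numbers where you invoke L\'evy's conditional Borel--Cantelli, but these play the same role and your calibration $p\log\rho+(1-p)\log c>0$ is exactly the paper's condition). One small remark: your appeal to the continuity of the stepsize distribution to ensure $x_t\neq x^*$ is unnecessary, since the deterministic bound $\norm{x_{t+1}}\geq(1-M\alpha_{\max})\norm{x_t}$ already guarantees $x_t\neq x^*$ for every $\omega$ once $x_0\neq x^*$.
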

	
	The intuition behind the proof of Theorem~\ref{thm: main} is to compare the nonlinear dynamics around a strict saddle point $x^* \in \calA$ with its linearization, as the linear dynamics has non-trivial unstable subspace, thanks to Proposition~\ref{prop: posi-lamb1}. Ideally, one would hope that the nonlinear dynamics would closely follow the linear dynamics, and thus leave the neighborhood of $x^*$ eventually; the obstacle for such argument is however that the approximation of the linearization is only valid for a finite time interval. Therefore, to establish the instability behavior of the nonlinear dynamics, we would need a much more refined and quantitative argument using the instability of the linear system. In particular, we would need to show that over a finite interval, with high probability, the linear system, and hence the nonlinear system, would drive $x_t$ away from the strict saddle point with quantitative bounds; see Theorem~\ref{thm: local-growth} in Section~\ref{sec: finitecomp}.
	Theorem~\ref{thm: main} then follows from an argument with a similar spirit as law of large numbers; see Section~\ref{sec: proofmain}.
	
	\begin{remark}\label{random-alpha}
		The technical Assumption~\ref{Assump: non-zero-proj} and the randomness in stepsizes are made so that the iterate $x_t = x_{t-1} - \alpha_{t-1} e_{i_{t - 1}} e_{i_{t-1}}^{\top} \nabla f(x_{t-1})$ would obtain some non-trivial component in the unstable subspace, which would be further amplified within a sufficiently long but finite time interval. When $\norm{\mathcal{P}_+^H(\tau^t \omega) e_{i_{t - 1}}}$ and $\lvert e_{i_{t-1}}^{\top} \nabla f(x_{t-1})\rvert$ are fixed and relatively large, a random $\alpha_{t-1}$ would keep  $\norm{\mathcal{P}_+^H(\tau^t \omega) x_t}$ away from $0$ with high probability; see Section~\ref{sec: finitecomp} for more details. It is an interesting open question whether it is possible to establish similar results without such assumptions. Our conjecture is that $\bP(\Omega_s(x_0))=0$ still holds for $x_0\in \bR^d\backslash\calA$ unless $x_0$ is located in a set with Lebesgue measure zero, similar to the results established in \cite{Lee-2019}.
	\end{remark}

	\smallskip 
	
	As an application of our main result Theorem~\ref{thm: main}, we can obtain the global convergence to stationary points with no negative Hessian eigenvalues for Algorithm~\ref{alg:random CGD}.
	More specifically, denote by
	\begin{equation*}
		\crit :=\{x\in\bR^d:\nabla f(x)=0\},
	\end{equation*}
	the set of all critical points of $f$, we have the following corollary, which will also be proved in Section~\ref{sec: proofmain}.
	\begin{corollary}\label{cor: global-conv}
		Under the same assumptions as in Theorem~\ref{thm: main} and assume further that every $x^*\in\crit$ is an isolated critical point. For any $x_0\in\bR^d\backslash \calA$ with bounded level set $L(x_0)=\{x\in\bR^d:f(x)\leq f(x_0)\}$, with probability $1$, $\{x_t\}_{t\in\bN}$ is convergent with limit in $\crit\backslash\calA$.
	\end{corollary}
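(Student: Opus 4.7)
The plan is to combine three ingredients: the strict-descent property of Algorithm~\ref{alg:random CGD} guaranteed by $\alpha_{\max}<1/M$, the isolation of critical points in the compact level set $L(x_0)$, and Theorem~\ref{thm: main} to rule out strict saddles as the limit. I work throughout on the almost sure event provided by Theorem~\ref{thm: main}.

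First I would establish a quadratic descent estimate and summability of gradients. Applying the $M$-smoothness from Assumption~\ref{Assump: Hess_bdd} along the coordinate step $x_{t+1} = x_t - \alpha_t e_{i_t} \partial_{i_t} f(x_t)$ yields
\begin{equation*}
	f(x_{t+1}) \leq f(x_t) - \alpha_t\bigl(1-\tfrac{M\alpha_t}{2}\bigr)(\partial_{i_t} f(x_t))^2 \leq f(x_t) - c\,(\partial_{i_t} f(x_t))^2,
\end{equation*}
with $c:=\alpha_{\min}(1-M\alpha_{\max}/2)>0$. In particular $f(x_t)$ is non-increasing, so $\{x_t\}\subset L(x_0)$, which is compact by hypothesis; telescoping gives $\sum_t (\partial_{i_t} f(x_t))^2<\infty$ deterministically. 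Since $i_t$ is uniform on $\{1,\dots,d\}$ and $x_t$ is $\mathcal{F}_{t-1}$-measurable,
\begin{equation*}
	\bE\bigl[(\partial_{i_t} f(x_t))^2\,\big|\,\mathcal{F}_{t-1}\bigr] = \tfrac{1}{d}\norm{\nabla f(x_t)}^2,
\end{equation*}
so $\bE\sum_t\norm{\nabla f(x_t)}^2<\infty$ by monotone convergence, giving $\nabla f(x_t)\to 0$ almost surely. The same bound also yields $\norm{x_{t+1}-x_t}\leq \alpha_{\max}\lvert\partial_{i_t} f(x_t)\rvert\to 0$ almost surely.

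Second I would upgrade ``every limit point of $\{x_t\}$ is critical'' to convergence of the entire sequence. By compactness of $L(x_0)$ and continuity of $\nabla f$, every limit point of $\{x_t\}$ lies in $\crit\cap L(x_0)$, which is finite by the isolation hypothesis. Suppose for contradiction that $\{x_t\}$ has two distinct limit points $x^*,y^*$; choose $r>0$ small enough that $B_{2r}(x^*)\cap \crit=\{x^*\}$ and $B_{2r}(x^*)$ is disjoint from a neighborhood of $y^*$. Since $x_t\in B_r(x^*)$ infinitely often and $x_t\notin B_{2r}(x^*)$ infinitely often, and the one-step displacement is eventually smaller than $r$, there are infinitely many indices $s$ with $x_s\in \overline{B_{2r}(x^*)}\setminus B_r(x^*)$. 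Extracting a convergent subsequence in this compact annulus produces a limit point there, contradicting that every limit point is critical. Hence $x_t\to x^{\infty}$ almost surely for some $x^{\infty}\in\crit\cap L(x_0)$.

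Finally, Theorem~\ref{thm: main} gives $\bP(\calC(\calA,x_0))=0$, so with probability one $x^{\infty}\notin\calA$, i.e.\ $x^{\infty}\in\crit\setminus\calA$. The main technical step is the annulus argument of the second paragraph; the descent inequality and the invocation of Theorem~\ref{thm: main} are then routine.
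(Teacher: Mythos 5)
Your proof is correct and reaches the same conclusion, but the route to ``every accumulation point of $\{x_t\}$ is critical'' is genuinely different from the paper's. The paper's Proposition~\ref{prop: converge-stationary} covers the compact set $L(x_0,\eta)=\{x\in L(x_0):\norm{\nabla f(x)}\geq\eta\}$ by finitely many neighborhoods $U_{x^k}$, on each of which a fixed coordinate $i_{x^k}$ (drawn with probability $1/d$ at every step) forces a function decrease exceeding the oscillation of $f$ on $U_{x^k}$, so each $U_{x^k}$ is visited only finitely often almost surely; intersecting over $\eta=1/n$ then shows $\nabla f(x_t)\to 0$ a.s. Your argument instead telescopes the one-step descent $f(x_{t+1})\leq f(x_t)-c\,(\partial_{i_t}f(x_t))^2$ to get deterministic summability of $(\partial_{i_t}f(x_t))^2$, then passes through $\bE[(\partial_{i_t}f(x_t))^2\mid\mathcal{F}_{t-1}]=\frac{1}{d}\norm{\nabla f(x_t)}^2$ and Tonelli to conclude $\sum_t\norm{\nabla f(x_t)}^2<\infty$ a.s.\ directly --- a cleaner, more standard stochastic-optimization style argument that avoids the finite covering entirely and gives the stronger $\ell^2$ summability for free. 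Your annulus argument for upgrading to convergence of the whole sequence is essentially the paper's Proposition~\ref{prop: converge-stationary2}, though you are more explicit than the paper in noting that the eventually-small step length $\norm{x_{t+1}-x_t}\to 0$ is what prevents the iterate from hopping over the gap between neighborhoods of distinct critical points. The concluding invocation of Theorem~\ref{thm: main} is identical.
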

	
	\begin{remark}
		In Corollary~\ref{cor: global-conv}, if we further assume that all saddle points of $f$ are strict, then the algorithm iterate converges to a local minimum with probability $1$. Let us also mention that for many non-convex problems, saddle points are suboptimal while there do not exist ``bad" local minima, e.g. phase retrieval \cite{sun2018geometric}, deep learning \cite{kawaguchi2016deep, lu2017depth}, and low-rank matrix problems \cite{ge2017no}. For these problems, convergence to local minima suffice to guarantee good performance. 
	\end{remark}

	\begin{remark}
		In our setting without adding noise to gradient or iterate, we cannot hope for good convergence rates for arbitrary initial iterate. In fact as shown in \cite{Du-17}, the convergence of deterministic gradient descent algorithm to a local minimum might take exponentially long time; we expect similar behavior for the randomized coordinate gradient descent Algorithm~\ref{alg:random CGD}. Let us also remark that while we need the random stepsize as discussed in Remark~\ref{random-alpha}, the interval $[\alpha_{\min}, \alpha_{\max}]$ could be made arbitrarily small; the result holds as long as $0<\alpha_{\min} < \alpha_{\max}<1/M$. 
	\end{remark}
	
	\section{Proofs}\label{sec: proof}
	
	We collect all proofs in this section. 
	
	\subsection{Analysis of the linearized system}\label{sec: positive-Lya}
	We will first study the linear dynamical system, for which we assume the objective function is given by 
	\begin{equation}\label{obj: quadratic}
		f^H(x)=\frac{1}{2}x^{\top} H x,
	\end{equation}
	where $H$ is a symmetric matrix in $\bR^{d\times d}$ with at least one negative eigenvalue. In this case, the coordinate descent algorithm is given by 
	\begin{equation*}
		x_{t+1}=\left(I-\alpha_t e_{i_t} e_{i_t}^{\top} H\right)x_t,
	\end{equation*}
	which corresponds to the linear dynamical system  $\Phi^H(t,\omega)$ with single step map $A^H(\omega)$,  defined in \eqref{linear: Phi} and \eqref{linear: A} respectively. 
	
	Our main goal in this subsection is to prove Proposition~\ref{prop: posi-lamb1} for this linear dynamical system which states that at least one Lyapunov exponent of $\Phi^H(t,\omega)$ is positive. It suffices to show that there exists some $x_0$, such that $\norm{x_t}$ grows exponentially to infinity, which will follow from an energy argument, similar to the proof of \cite[Proposition 5]{Lee-2019}. Although we consider a randomized coordinate gradient descent algorithm instead of a cyclic one, one step, i.e., Lemma~\ref{lem: linear_decay}, in the proof of Proposition~\ref{prop: linear_decay} follows closely the proof in \cite[Appendix A]{Lee-2019}.  We start from $x_0$ with $f^H(x_0)<0$ and consider a finite time interval with length $m\geq d$. Proposition~\ref{prop: linear_decay} establishes a quantitative decay estimate for $f^H(x_{t+m})$ compared with $f^H(x_t)$, which leads to our desired  result Proposition~\ref{prop: posi-lamb1}. 
	
	\begin{proposition}\label{prop: linear_decay}
		Let $m\geq d$ be fixed. For the objective function \eqref{obj: quadratic} with $\lambda_{\min}(H)<0$, suppose 
		that $0<\alpha_{\min}<\alpha_{\max}<1/\max_{1\leq i\leq d}|H_{ii}|$, there exists $c\in(0,1)$ depending on $m$, $H$, $\alpha_{\min}$, and $\alpha_{\max}$, 
		such that 
		\begin{equation*}
			f^H(x_{t+m})-f^H(x_t)\leq c\, f^H(x_t),
		\end{equation*}
		holds as long as $f^H(x_t)<0$ and $\{1,2,\dots,d\}=\{i_t,i_{t+1},\dots,i_{t+m-1}\}$ (in the sense of sets). 
	\end{proposition}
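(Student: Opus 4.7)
My plan is an energy argument in the spirit of \cite{Lee-2019}: first show that each step of the algorithm decreases $f^H$ by an amount proportional to the squared partial derivative along the chosen coordinate, then sum these decreases over an $m$-step window in which every coordinate has been touched to recover a definite fraction of $\lvert f^H(x_t)\rvert$. The first ingredient (presumably the content of Lemma~\ref{lem: linear_decay}) is the single-step identity obtained by expanding the quadratic,
\begin{equation*}
f^H(x_{t+1}) - f^H(x_t) = -\alpha_t\bigl(1 - \tfrac{\alpha_t H_{i_t i_t}}{2}\bigr)\bigl(\partial_{i_t} f^H(x_t)\bigr)^2,
\end{equation*}
where the hypothesis $\alpha_{\max} < 1/\max_i \lvert H_{ii}\rvert$ ensures $(1-\alpha_t H_{i_t i_t}/2) \ge \tfrac12$, so $f^H(x_{t+1}) - f^H(x_t) \le -\tfrac{\alpha_{\min}}{2}(\partial_{i_t} f^H(x_t))^2$. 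Telescoping yields
\begin{equation*}
f^H(x_t) - f^H(x_{t+m}) \;\ge\; \tfrac{\alpha_{\min}}{2}\, S, \qquad S := \sum_{k=0}^{m-1}\bigl(\partial_{i_{t+k}} f^H(x_{t+k})\bigr)^2,
\end{equation*}
which reduces the proposition to producing a constant $c_0=c_0(m,H,\alpha_{\min},\alpha_{\max})>0$ with $S \ge c_0\,\lvert f^H(x_t)\rvert$.

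For the lower bound on $S$ the key tension is that its summands are partial derivatives at the intermediate iterates $x_{t+k}$, not at $x_t$. The idea is to exploit that a small $S$ forces the trajectory to stay close to $x_t$: from $x_{t+k} - x_t = -\sum_{j<k}\alpha_{t+j} e_{i_{t+j}} \partial_{i_{t+j}} f^H(x_{t+j})$ and Cauchy--Schwarz, $\norm{x_{t+k}-x_t}^2 \le m\alpha_{\max}^2\, S$. Since $\nabla f^H$ is $\norm{H}$-Lipschitz (it is the linear map $H$), the elementary inequality $(a-b)^2 \ge \tfrac12 a^2 - b^2$ gives, for every $i\in\{1,\dots,d\}$ and any $k_i$ with $i_{t+k_i}=i$ (existing by hypothesis),
\begin{equation*}
\bigl(\partial_i f^H(x_{t+k_i})\bigr)^2 \;\ge\; \tfrac12\bigl(\partial_i f^H(x_t)\bigr)^2 - \norm{H}^2 m\alpha_{\max}^2\, S.
\end{equation*}
Summing over $i$, using that $\sum_i (\partial_i f^H(x_{t+k_i}))^2 \le S$ and $\sum_i (\partial_i f^H(x_t))^2 = \norm{Hx_t}^2$, I rearrange to
\begin{equation*}
S \;\ge\; \frac{\norm{Hx_t}^2}{2\bigl(1 + d\,\norm{H}^2 m\alpha_{\max}^2\bigr)}.
\end{equation*}

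The last step is the linear estimate $\norm{Hx_t}^2 \ge c'\,\lvert f^H(x_t)\rvert$. Decomposing $x_t = p+q$ with $p\in \ker H$ and $q\perp \ker H$, one has $Hx_t = Hq$ and $f^H(x_t) = \tfrac12 q^\top H q$, whence $\norm{q}^2 \ge 2\lvert f^H(x_t)\rvert/\norm{H}$ and $\norm{Hq}^2 \ge \sigma_{\min}^2 \norm{q}^2$, where $\sigma_{\min}>0$ is the smallest nonzero singular value of $H$ (strictly positive because $\lambda_{\min}(H)<0$ forces $H\neq 0$). Combining everything produces $c = \alpha_{\min}c_0/2$ with $c_0 = \sigma_{\min}^2/\bigl(\norm{H}(1+d\norm{H}^2 m \alpha_{\max}^2)\bigr)$, shrunk if necessary so that $c\in(0,1)$.

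The main obstacle is the circularity in the estimate for $S$: bounding the perturbation $\partial_i f^H(x_{t+k_i}) - \partial_i f^H(x_t)$ requires control of $\norm{x_{t+k_i}-x_t}$, which itself is controlled only in terms of $S$. The self-referential inequality must be rearranged to absorb $S$ on the right, and one must check that the perturbation term can actually be absorbed---this forces the ``each coordinate hit'' condition to be used together with $m\ge d$. A secondary subtlety is the linear-in-$\lvert f^H\rvert$ bound on $\norm{Hx}^2$, which is immediate when $H$ is non-degenerate but requires the kernel decomposition above in the general case permitted by the proposition.
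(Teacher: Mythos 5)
Your proof is correct, and it takes a genuinely different route from the paper's. The paper's argument isolates a single ``good step'': after decomposing $x_0 = y^* + y_0$ with $y^* \in \ker H$, $y_0 \in \ran H$, and tracking $y_t$, it proves Lemma~\ref{lem: linear_decay} by contradiction---if every step satisfies $\alpha_t\lvert e_{i_t}^\top H y_t\rvert < \delta\norm{y_t}$ then the iterates stay within $2m\delta\norm{y_0}$ of $y_0$, so every $\alpha_t\lvert e_{i_t}^\top H y_0\rvert$ is small, contradicting $y_0 \in \ran H$ together with the coordinate-covering hypothesis and the explicit choice of $\delta$ in \eqref{def:delta}---and then plugs the resulting single large decrease at time $T$ into the one-step decay estimate. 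You instead telescope the per-step decrease into $f^H(x_t)-f^H(x_{t+m}) \ge \tfrac{\alpha_{\min}}{2}S$, derive the self-referential inequality $S \ge \tfrac12\norm{Hx_t}^2 - d\norm{H}^2 m\alpha_{\max}^2 S$ by perturbing each gradient component back to $x_t$, and absorb $S$ to get a closed-form lower bound without ever choosing a threshold $\delta$. You also invoke the kernel decomposition only at the very end (to pass from $\norm{Hx_t}^2$ to $\lvert f^H(x_t)\rvert$) rather than carrying $y_t$ throughout. The two approaches buy slightly different things: the paper's yields a quantitative ``one coordinate step has a large partial derivative'' statement (which reappears verbatim in the cyclic-coordinate analysis of~\cite{Lee-2019}), while yours yields an explicit formula for $c$ with no case analysis and a cleaner absorption structure. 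One small misattribution: your parenthetical identifies the single-step decrease identity as ``presumably the content of Lemma~\ref{lem: linear_decay}''; in fact that identity is the standalone calculation \eqref{decay: quad}, and Lemma~\ref{lem: linear_decay} is precisely the contradiction argument your telescoping-plus-absorption step replaces.
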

	
	\begin{remark}
		The condition $\{1,2,\dots,d\}=\{i_t,i_{t+1},\dots,i_{t+m-1}\}$ above is known as ``generalized Gauss-Seidel rule'' in the literature of coordinate methods \cite{Tseng-09, Wright-12}.
	\end{remark}

	\begin{proof} 
		Without loss of generality, we assume that $t=0$. 
		Due to the choice of $\alpha_{\max}$, we have the following simple non-increasing property for any $t'\in\bN$:
		\begin{equation}\label{decay: quad}
			\begin{split}
				f^H(x_{t'+1})&=\frac{1}{2}x_{t'+1}^{\top} H x_{t'+1}\\
				&=\frac{1}{2}x_{t'}^{\top} \left(I-\alpha_{t'} e_{i_{t'}} e_{i_{t'}}^{\top} H\right)^{\top} H \left(I-\alpha_{t'} e_{i_{t'}} e_{i_{t'}}^{\top} H\right) x_{t'}\\
				&=f^H(x_{t'})-\alpha_{t'} \left(e_{i_{t'}}^{\top} H x_{t'}\right)^2+\frac{1}{2}\alpha_t^2 e_{i_{t'}}^{\top} H e_{i_{t'}} \left(e_{i_{t'}}^{\top} H x_{t'}\right)^2\\
				&\leq f^H(x_{t'})-\frac{\alpha_{t'}}{2} \left(e_{i_{t'}}^{\top} H x_{t'}\right)^2.
			\end{split}
		\end{equation}
		
		Write $x_0=y^*+y_0$ with $y^*\in\ker(H)$ and $y_0\in\ran(H)$. Let 
		\begin{equation}
			\label{def: y}
			y_{t'+1}=y_{t'}-\alpha_{t'} e_{i_{t'}} e_{i_{t'}}^{\top} H y_{t'}, \qquad t'=0,1,\dots,m-1.
		\end{equation} 
		Then $x_{t'}=y^*+y_{t'}$ holds for any $t'=0,1,\dots,m$. Using \eqref{decay: quad}, to give an upper bound for $f^H(x_{t+m})-f^H(x_t)$, we would like a nontrivial lower bound for $\alpha_{t'}\bigl(e_{i_{t'}}^{\top} H x_{t'}\bigr)^2=\alpha_{t'}\bigl(e_{i_{t'}}^{\top} H y_{t'}\bigr)^2$ for some $t'\in\{t,t+1,\dots,t+m-1\}$, which is guaranteed by Lemma~\ref{lem: linear_decay}, whose proof will be postponed. 
		
		\begin{lemma} \label{lem: linear_decay}
			Suppose that $\{1,2,\dots,d\}=\{i_0,i_1,\dots,i_{m-1}\}$. For any
			\begin{equation}\label{def:delta}
				0<\delta\leq\min\left\{\frac{1}{2m},\frac{\alpha_{\min}\sigma_{\min}(H)}{2\sqrt{m}(m\alpha_{\max}\sigma_{\max}(H)+1)}\right\},
			\end{equation}
			where $\sigma_{\min}(H)$ and $\sigma_{\max}(H)$ are the smallest and largest positive singular values of $H$ respectively, 
			if $0\neq y_0\in \ran(H)$, then there exists $T\in \{0,1,\dots,m-1\}$ such that $\alpha_{T}\left\vert e_{i_{T}}^{\top} H y_{T}\right\vert\geq \delta\norm{y_{T}}$, where the sequence $y_t$ is given as in \eqref{def: y}. 
		\end{lemma}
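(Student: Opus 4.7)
My plan is to argue by contradiction: assume that $\alpha_{t'}\lvert e_{i_{t'}}^{\top} H y_{t'}\rvert < \delta\|y_{t'}\|$ for every $t'\in\{0,1,\dots,m-1\}$, and derive that $\|Hy_0\|$ must be strictly smaller than $\sigma_{\min}(H)\|y_0\|$, contradicting $0\neq y_0\in\ran(H)$. The intuition is that the assumption forces each coordinate $|e_i^{\top}H y_{t'}|$ (when $i_{t'}=i$) to be small, and the iterate $y_{t'}$ drifts only slightly from $y_0$ over $m$ steps, so every coordinate of $Hy_0$ must be small; this contradicts the fact that on $\ran(H)$, $H$ is bounded below by $\sigma_{\min}(H)$.

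First, I would establish a simple uniform bound on $\|y_{t'}\|$. From the recursion \eqref{def: y} together with the contradiction hypothesis, $\|y_{t'+1}\|\leq (1+\delta)\|y_{t'}\|$, so $\|y_{t'}\|\leq (1+\delta)^{t'}\|y_0\|\leq e^{\delta m}\|y_0\|\leq e^{1/2}\|y_0\|<2\|y_0\|$ using $\delta\leq 1/(2m)$. Then I would estimate the drift
\begin{equation*}
\|y_{t'}-y_0\|\leq\sum_{s=0}^{t'-1}\alpha_s\lvert e_{i_s}^{\top}Hy_s\rvert<\sum_{s=0}^{t'-1}\delta\|y_s\|\leq 2\delta m\|y_0\|.
\end{equation*}

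Next, for any fixed coordinate $i\in\{1,\dots,d\}$, the assumption $\{1,2,\dots,d\}=\{i_0,\dots,i_{m-1}\}$ yields some $t'\in\{0,\dots,m-1\}$ with $i_{t'}=i$. For that $t'$, I split
\begin{equation*}
\lvert e_i^{\top}Hy_0\rvert\leq \lvert e_i^{\top}Hy_{t'}\rvert + \|H\|\,\|y_0-y_{t'}\|
< \frac{\delta\|y_{t'}\|}{\alpha_{\min}} + 2\delta m\,\sigma_{\max}(H)\|y_0\|
\leq 2\delta\|y_0\|\left(\frac{1}{\alpha_{\min}}+m\,\sigma_{\max}(H)\right),
\end{equation*}
and summing over $i=1,\dots,d$ gives $\|Hy_0\|^2<4d\delta^2\|y_0\|^2\bigl(\tfrac{1}{\alpha_{\min}}+m\sigma_{\max}(H)\bigr)^2$, hence
\begin{equation*}
\|Hy_0\|<2\sqrt{d}\,\delta\,\|y_0\|\!\left(\frac{1}{\alpha_{\min}}+m\,\sigma_{\max}(H)\right)\leq \frac{2\sqrt{m}\,\delta\,(1+m\alpha_{\max}\sigma_{\max}(H))}{\alpha_{\min}}\|y_0\|,
\end{equation*}
using $m\geq d$ and $\alpha_{\min}\leq\alpha_{\max}$. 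By the assumed upper bound on $\delta$, the right-hand side is at most $\sigma_{\min}(H)\|y_0\|$. Since $y_0\in\ran(H)$, expanding in an orthonormal eigenbasis of $H$ gives $\|Hy_0\|\geq\sigma_{\min}(H)\|y_0\|$, producing the strict inequality $\sigma_{\min}(H)\|y_0\|<\sigma_{\min}(H)\|y_0\|$, a contradiction.

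The routine parts are the growth/drift estimates, which just use the triangle inequality and $\delta\leq 1/(2m)$. The place that needs care is coordinating the constants: one must note that $m\geq d$ to replace $\sqrt{d}$ by $\sqrt{m}$, and insert $\alpha_{\min}$ into the bound on $|e_i^{\top}H y_{t'}|$ (via $\alpha_{t'}\geq\alpha_{\min}$) and then use $\alpha_{\min}\leq\alpha_{\max}$ to match the exact expression $\alpha_{\min}\sigma_{\min}(H)/(2\sqrt{m}(m\alpha_{\max}\sigma_{\max}(H)+1))$ appearing in \eqref{def:delta}. The only structural ingredient beyond bookkeeping is the lower bound $\|Hy_0\|\geq\sigma_{\min}(H)\|y_0\|$ on $\ran(H)$, which crucially uses $y_0\in\ran(H)$ and is the reason we work with $y$ rather than directly with $x$.
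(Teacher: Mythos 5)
Your proof is correct and follows essentially the same strategy as the paper's: argue by contradiction, establish that the iterates $y_{t'}$ drift only $O(\delta m)\|y_0\|$ from $y_0$ over the block, deduce that every coordinate of $Hy_0$ is small, and contradict the lower bound $\|Hy_0\|\geq\sigma_{\min}(H)\|y_0\|$ valid on $\ran(H)$. The only differences are bookkeeping: you first bound $\|y_{t'}\|\leq (1+\delta)^{t'}\|y_0\|<2\|y_0\|$ and then sum increments, whereas the paper proves $\|y_t-y_0\|<2t\delta\|y_0\|$ directly by induction; and you divide out the stepsizes and sum over coordinates $i$ (picking up a $\sqrt{d}\leq\sqrt{m}$), whereas the paper keeps $\alpha_t$ in the estimate and sums over times $t$ (picking up $\sqrt{m}$ directly and using $\alpha_{\min}\|Hy_0\|\leq(\sum_t(\alpha_t|e_{i_t}^{\top}Hy_0|)^2)^{1/2}$). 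Both routes land on the same constant in \eqref{def:delta}.
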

		
		Assuming the lemma, there exists $T\in\{0,1,\dots,m-1\}$ that 
		\begin{equation*}
			\alpha_T\left\vert e_{i_{T}}^{\top} H y_{T}\right\vert\geq \delta\norm{y_{T}},
		\end{equation*}
		with a fixed $\delta>0$ satisfying \eqref{def:delta}. We can further constrain that $\frac{\delta^2}{\alpha_{\min}\sigma_{\max}(H)}<1$. Thus, we have
		\begin{equation*}\label{decay esti}
			\begin{split}
				f^H(x_m)&\leq f^H (x_{T+1})\leq f^H(x_T)- \frac{\alpha_T}{2}\left(e_{i_{T}}^{\top} H x_{T}\right)^2\leq f^H (x_{T}) -\frac{\delta^2}{2\alpha_T}\norm{y_{T}}^2,
			\end{split}
		\end{equation*}
		which combined with $\sigma_{\max}(H)\norm{y_{T}}^2\geq -y_{T}^{\top} H y_{T}=-x_{T}^{\top} H x_{T}=-2f^H(x_T)$,
		yields that 
		\begin{equation*}
			f^H (x_m) \leq\left( 1 +\frac{\delta^2}{\alpha_T \sigma_{\max}(H)}\right)f^H (x_{T}) \leq\left( 1 +\frac{\delta^2}{\alpha_T \sigma_{\max}(H)}\right)f^H (x_0). 
		\end{equation*}
		Set $c=\frac{\delta^2}{\alpha_{\max} \sigma_{\max}(H)}$ and we get that $f^H(x_m)-f^H(x_0)\leq c f^H(x_0)$.
	\end{proof}

	We finish the proof by establishing Lemma~\ref{lem: linear_decay} below. 
	
	\begin{proof}[Proof of Lemma~\ref{lem: linear_decay}] Suppose on the contrary that $\alpha_t\left\vert e_{i_t}^{\top} H y_t\right\vert< \delta\norm{y_t}$ for any $t\in \{0,1,\dots,m-1\}$. It holds that
		\begin{equation*}
			\norm{y_1-y_0}=\alpha_0\left\vert e_{i_0}^{\top} H y_0\right\vert<\delta\norm{y_0}<2\delta\norm{y_0}.
		\end{equation*}
		We claim that 
		\begin{equation} \label{eq: disty}
			\norm{y_t-y_0}<2 t\delta\norm{y_0},
		\end{equation}
		for any $t=1,2,\dots,m-1$. By induction, assume that  $\norm{y_t-y_0}<2t\delta\norm{y_0}$ holds for some $t\in\{1,2,\dots,m-2\}$, then 
		\begin{equation*}
			\norm{y_{t+1}-y_t}=\alpha_t\left\vert e_{i_t}^{\top} H y_t\right\vert< \delta\norm{y_t}<\delta(2t\delta+1)\norm{y_0}<2\delta\norm{y_0},
		\end{equation*}
		where the last inequality uses $2 t \delta < 2m \delta \leq 1 $. It follows $\norm{y_{t+1}-y_0}\leq \norm{y_t-y_0}+\norm{y_{t+1}-y_t}<2(t+1)\delta\norm{y_0}$.
		
		Using \eqref{eq: disty} and $\max_{1\leq i\leq d} \norm{H e_i}\leq \sigma_{\max}(H)$, we have 
		\begin{equation*}
			\begin{split}
				\alpha_t\bigl\lvert e_{i_t}^{\top} H y_0\bigr\rvert&\leq \alpha_t\bigl\lvert e_{i_t}^{\top} H (y_t-y_0)\bigr\rvert+\alpha_t\bigl\lvert e_{i_t}^{\top} H y_t\bigr\rvert\\
				&<\alpha_{\max}\sigma_{\max}(H)\cdot 2 t\delta\norm{y_0}+2\delta\norm{y_0}\\
				&<2\delta(m\alpha_{\max}\sigma_{\max}(H)+1)\norm{y_0},
			\end{split}
		\end{equation*}
		for $t=0,1,\dots,m-1$. Since $\text{span}\{e_{i_k}:k=0,1,\dots,m-1\}=\bR^d$, noticing that $y_0\in\ran{H}$, we have
		\begin{equation*}
			\begin{split}
				\alpha_{\min}\sigma_{\min}(H) \norm{y_0}&\leq \alpha_{\min}\norm{H y_0}\leq\left(\sum_{t=0}^{m-1} \left(\alpha_t\bigl\lvert e_{i_t}^{\top} H y_0\bigr\rvert\right)^2\right)^{1/2}\\
				& <2\delta\sqrt{m}(m\alpha_{\max}\sigma_{\max}(H)+1)\norm{y_0},
			\end{split}
		\end{equation*}
		which contradicts with the choice of $\delta$ in \eqref{def:delta}.
	\end{proof}

	We are now ready to prove Proposition~\ref{prop: posi-lamb1} which states the existence of positive Lyapunov exponent of the linear dynamical system. 
	
	\begin{proof}[Proof of Proposition~\ref{prop: posi-lamb1}] It suffices to show that for almost every $\omega\in\Omega$ there exist some $x_0\in \bR^d$, $\epsilon>0$, and $T>0$, such that $x_t=\Phi(t,\omega)x_0$ satisfies $\norm{x_t}\geq e^{\epsilon t}$ for any $t>T$. Let $x_0$ be an eigenvector corresponding to a negative eigenvalue of $H$. Then it holds that $f^H(x_0)< 0$. Consider a fixed $m\geq d$. For any $k\in\bN$, set $I_k=1$ if $\{1,2,\dots,d\}=\{i_{km},i_{km+1},\dots,i_{km+m-1}\}$ 
		and $I_k=0$ otherwise. We can see that the random variables, $I_0,I_1,I_2,\dots$, are independent and identically distributed with $\bE I_0=\bP(I_0=1)\in(0,1)$. By Proposition~\ref{prop: linear_decay}, we obtain that
		\begin{equation*}
			f^H(x_{(k+1)m})\leq\begin{cases}
				(1+c)f^H(x_{km}),&\text{if }I_k=1,\\
				f^H(x_{km}),&\text{if }I_k=0,
			\end{cases}
		\end{equation*}	
		where $c$ is the constant from Proposition~\ref{prop: linear_decay}.
		Therefore, 
		\begin{equation*}
			\frac{\lambda_{\min}(H)}{2}\norm{x_{km}}^2\leq f^H(x_{km})\leq(1+c)^{\sum_{j=0}^{k-1} I_j}f^H(x_0),
		\end{equation*}
		which implies that
		\begin{equation}\label{norm:xkm}
			\norm{x_{km}}\geq \left(\frac{2f^H(x_0)}{\lambda_{\min}(H)}\right)^{1/2}\cdot (1+c)^{\frac{1}{2}\sum_{j=0}^{k-1} I_j}.
		\end{equation}
		Note that $\bE |I_0|=\bE I_0<\infty$. The strong law of large number suggests that for almost every $\omega\in\Omega$, there exists some $K$, such that for all $k \geq K$
		\begin{equation}\label{SLLN}
			\sum_{j=0}^{k-1} I_j\geq \frac{\bE I_0}{2}k.
		\end{equation}
		Combining \eqref{norm:xkm} and \eqref{SLLN}, we arrive at
		\begin{equation*}
			\norm{x_{km}}\geq \left(\frac{2f^H(x_0)}{\lambda_{\min}(H)}\right)^{1/2}\cdot (1+c)^{\frac{\bE I_0}{4m}\cdot km}.
		\end{equation*}
		Noticing that $(1+c)^{\frac{\bE I_0}{4m}}$ is greater than $1$, $\norm{x_{km}}$ grows exponentially in $km$ and we complete the proof.
	\end{proof}
	
	\subsection{Finite block analysis}\label{sec: finitecomp}
	
	In this subsection, we study the behavior of the nonlinear dynamical system near a strict saddle point of $f$, which without loss of generality can be assumed to be $x^*=0$. As mentioned above, in a small neighborhood of $x^*$, while it is not possible to control the difference between nonlinear and linear systems for infinite time,  the nonlinear system can be approximated by the linear system during a finite time horizon. 
	
	The main conclusion of this subsection is the following theorem which states that after a finite time interval with length $T$, the distance of the iterate from $x^*=0$ will be amplified exponentially with high probability.
	\begin{theorem}\label{thm: local-growth}
		Suppose that Assumptions~\ref{Assump: Hess_bdd}, \ref{Assump: non-degenerate}, and \ref{Assump: non-zero-proj} hold and that $0<\alpha_{\min}<\alpha_{\max}<1/M$. There exists $\epsilon_*\in(0,1/6)$ such that for any $\epsilon\in(0,\epsilon_*)$, we have $T_* = T_*(\epsilon) \in\bNp$ that for any $T \in \bNp$ with $T\geq T_*$ and any $t\in\bN$, conditioned on $\mathcal{F}_{t-1}$, with probability at least $1-4\epsilon$, it holds for all $x_t \in V$ that
		\begin{equation}\label{local-growth}
			\norm{x_{t+T}}\geq \exp\biggl(\frac{6\epsilon}{1 - 6 \epsilon} \bigl\lvert \log(1-M\alpha_{\max}) \bigr\rvert \, T\biggr) \norm{x_t},
		\end{equation}
		where $V$ is a neighborhood of $x^* = 0$, depending on $\epsilon$, $T$, and $f$ near $x^*$. 
	\end{theorem}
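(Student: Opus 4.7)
I plan to prove Theorem~\ref{thm: local-growth} by comparing the nonlinear iterate $\varphi(T,\tau^t\omega)x_t$ with its linearization $\Phi^H(T,\tau^t\omega)x_t$, where $H=\nabla^2 f(x^*)=\nabla^2 f(0)$, and by combining three ingredients: (i) exponential expansion of $\Phi^H$ on the unstable subspace $W_+^H$, furnished by the multiplicative ergodic theorem together with Proposition~\ref{prop: posi-lamb1}; (ii) control of the nonlinear Taylor error on a sufficiently small neighborhood $V$; and (iii) a high-probability lower bound on the projection of the iterate onto $W_+^H$, exploiting the randomness of $(i_s,\alpha_s)$ along the trajectory. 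For each fixed realization of the $\mathcal{F}_{t-1}$-measurable quantity $x_t\in V$, the samples $\pi_t(\omega),\pi_{t+1}(\omega),\dots$ are independent of $\mathcal{F}_{t-1}$ and drive the analysis.

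For (i), Proposition~\ref{prop: posi-lamb1} supplies $\lambda_1^H>0$; let $\lambda_+$ denote the smallest positive Lyapunov exponent of $\Phi^H$. An Egorov-type refinement of Theorem~\ref{thm: MET} yields an event $\Omega_1$ with $\bP(\Omega_1)\geq 1-\epsilon$ and a threshold $T_*^{(1)}(\epsilon)$ such that
\begin{equation*}
  \norm{\Phi^H(T,\omega)v}\geq e^{T(\lambda_+-\epsilon)}\norm{v}\qquad\forall\,v\in W_+^H(\omega),\ T\geq T_*^{(1)},\ \omega\in\Omega_1.
\end{equation*}
For (ii), Taylor expansion of $\nabla f$ at the origin together with Assumption~\ref{Assump: Hess_bdd} gives $\varphi(1,\omega)x=A^H(\omega)x+\rho(\omega,x)$ with $\norm{\rho(\omega,x)}\leq C_1\norm{x}^2$; iterating $T$ steps with the deterministic bound $\norm{\Phi^H(T,\omega)}\leq(1+M\alpha_{\max})^T$ yields $\norm{\varphi(T,\omega)x_t-\Phi^H(T,\omega)x_t}\leq C(T)\norm{x_t}^2$, with $C(T)$ at most exponential in $T$. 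Taking $V=\{x:\norm{x}\leq\rho(T,\epsilon)\}$ with $\rho$ sufficiently small makes this error a negligible fraction of the linear term.

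The main technical step, and the expected obstacle, is ingredient (iii): showing that from an arbitrary $x_t\in V$ the iterate develops a non-trivial $W_+^H$-component within a short prefix block of steps. The crucial observation is that $P_{s+1}:=\mathcal{P}_+^H(\tau^{s+1}\omega)$ depends only on $\pi_{s+1}(\omega),\pi_{s+2}(\omega),\dots$, and is therefore independent of $(i_s,\alpha_s)$. Expanding
\begin{equation*}
  P_{s+1}x_{s+1}=P_{s+1}x_s-\alpha_s\bigl(e_{i_s}^{\top}\nabla f(x_s)\bigr)\,P_{s+1}e_{i_s}
\end{equation*}
displays the left-hand side as an affine function of the uniform random variable $\alpha_s$. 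A quantitative version of Assumption~\ref{Assump: non-zero-proj} supplies $\beta=\beta(\epsilon)>0$ such that $\min_i\norm{P_{s+1}e_i}\geq\beta$ with probability at least $1-\epsilon$; Assumption~\ref{Assump: non-degenerate} combined with $x_s\neq 0$ ensures that some coordinate of $Hx_s$ has magnitude at least $\sigma_{\min}(H)\norm{x_s}/\sqrt{d}$, so the uniform draw of $i_s$ hits such a coordinate with probability at least $1/d$. Over a block of $k=O(\log(1/\epsilon))$ consecutive steps, the probability of no good coordinate draw is $(1-1/d)^k\leq\epsilon$. Conditioned on a good step $s_0$, the probability that $\alpha_{s_0}$ drives $\norm{P_{s_0+1}x_{s_0+1}}$ below a threshold $\eta$ is bounded by the Lebesgue measure of a small interval of length proportional to $\eta/(\beta\sigma_{\min}(H)\norm{x_{s_0}}/\sqrt d)$, which is $\leq\epsilon$ for $\eta$ proportional to $\epsilon\beta\norm{x_{s_0}}$. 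Four failure modes — MET breakdown, $\min_i\norm{P_{s+1}e_i}$ too small, no good coordinate drawn in $k$ steps, and unlucky $\alpha_{s_0}$ — each have probability at most $\epsilon$, and their union accounts for the $4\epsilon$ in the bound.

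On the intersection of the four good events, the MET lower bound applied to $\Phi^H$ starting from $x_{s_0+1}$, whose $W_+^H$-component has magnitude $\gtrsim\epsilon\beta(1-M\alpha_{\max})^{k}\norm{x_t}$, together with the invariance $\Phi^H(\cdot,\tau^{s_0+1}\omega)W_+^H(\tau^{s_0+1}\omega)=W_+^H(\tau^{t+T}\omega)$, yields exponential growth of the projection; absorbing the linearization error from (ii) then delivers the lower bound on $\norm{x_{t+T}}/\norm{x_t}$. The specific exponent $\frac{6\epsilon}{1-6\epsilon}\lvert\log(1-M\alpha_{\max})\rvert$ in \eqref{local-growth} arises from a careful accounting in which the MET-driven amplification over the "expanding" steps is balanced against the worst-case per-step contractive factor $(1-M\alpha_{\max})$ absorbed over the $O(\epsilon T)$ "priming" steps, with the factor $6$ packaging the slacks from the four $\epsilon$-failure events and the linearization residual.
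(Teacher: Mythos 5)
Your proposal reproduces the paper's argument in its essential structure, including the same decomposition of the $4\epsilon$ failure probability into four modes: a short prefix block fails to hit a coordinate $i_s$ with $\lvert e_{i_s}^{\top}\nabla f(x_s)\rvert\geq\mu\norm{\nabla f(x_s)}$ (paper's Lemma~\ref{lem: L}), the quantitative form of Assumption~\ref{Assump: non-zero-proj} fails (Lemma~\ref{lem: Omega1}), the Egorov-uniformized MET approximation fails (Lemma~\ref{lem: T-Omega2}), and the uniform stepsize $\alpha_{\varrho_t-1}$ lands in a bad interval of measure $\leq\epsilon(\alpha_{\max}-\alpha_{\min})$ (Lemma~\ref{lem: nontrial-proj}). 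Two technical points in your sketch deserve care, though neither is fatal. First, you project onto the asymptotic unstable subspace $W_+^H(\tau^{s+1}\omega)$ and then invoke a uniform lower bound $\norm{\Phi^H(T,\omega)v}\geq e^{T(\lambda_+-\epsilon)}\norm{v}$ for all $v\in W_+^H(\omega)$; this is not an immediate consequence of the MET statement, since the one-sided Oseledets subspaces $V_i$ are not $A$-invariant and the restriction of $\Phi^H$ to $W_+^H(\omega)$ does not have an a priori spectral-gap bound. The paper sidesteps this by projecting onto the top-$d_+$ \emph{right singular subspace} of the finite-time propagator $\Phi^H\bigl((i,\alpha)_{\varrho_t:\varrho_t+S-1}\bigr)$, for which the amplification is an exact Pythagorean identity $\norm{\Phi^H x}^2=\norm{\Phi^H\mathcal{P}_+x}^2+\norm{\Phi^H(I-\mathcal{P}_+)x}^2$ with $\norm{\Phi^H\mathcal{P}_+x}\geq s_{d_+}\norm{\mathcal{P}_+x}$, and then uses the (Egorov-uniformized) convergence $\mathcal{P}_+^H(S,\omega)\to\mathcal{P}_+^H(\omega)$ to import the lower bound on $\norm{\mathcal{P}_+^H(\omega)e_i}$ from Assumption~\ref{Assump: non-zero-proj}; your version can be made rigorous but must perform this same transfer inside ingredient (i). Second, under the paper's $C^2$ hypothesis the one-step linearization error is only $o(\norm{x})$ near $x^*$, not $O(\norm{x}^2)$ as you write; this is harmless because (as the paper does) one only needs the per-step error $\leq\norm{x}/\bigl(S(r_+)^{S-1}\bigr)$ on a sufficiently small neighborhood $U_0$, which follows from $o(\norm{x})$ alone.
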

	
	The lower bound \eqref{local-growth} quantifies the amplification of $\norm{x_{t+T}}$: While we always have $\norm{x_{t+T}}\geq (1-M\alpha_{\max})^{T}\norm{x_t}$ (see \eqref{r-minus} below), the result states that with probability at least $1 - 4\epsilon$, the amplification factor is at least the right-hand side of \eqref{local-growth}, which is exponentially large in $T$. Hence on average $\norm{x_{t+T}}$ would be much larger than $\norm{x_t}$. This would lead to escaping of the iterate from the neighborhood of $x^* = 0$.
	
	To prove Theorem~\ref{thm: local-growth}, we would require a more quantitative characterization of the behavior of its linearization at $x^*$. In particular, we need a high probability estimate of the distance of the iterate from $x^*$ after some time interval. For this purpose, conditioned on $\mathcal{F}_{t-1}$ with the iterate $x_t$, we will first show in Lemma~\ref{lem: nontrial-proj} that, after some finite time, the orthogonal projection of the iterate $x_{\varrho_t}$ onto the unstable subspace, where $t < \varrho_t \leq t + L$ for some constant $L$, is significant. The component in the unstable subspace would then be further amplified subsequently by $\Phi^H(S, \tau^{\varrho_t} \omega)$, where $H=\nabla^2 f (x^*)$. Here the time duration $S$ 
	would be chosen sufficiently large such that the distance from $x_{\varrho_t + S}$ to $x^*$ is exponentially amplified. Theorem~\ref{thm: local-growth} follows by setting $T = L + S$. 
	In the second step above, we would need to control the closeness between the linear and nonlinear systems within a time horizon with length $S$. 
	
	Such a finite block analysis approach has been used to establish the stability of Lyapunov exponent of random dynamical systems \cite{Ledrappier-91, Froyland-15}, which inspired our proof technique for Theorem~\ref{thm: local-growth} and Theorem~\ref{thm: main}.
	
	\medskip

	We first set the small constant $\epsilon$ in Theorem~\ref{thm: local-growth} which controls the failure probability of the amplification bound. Let $\lambda_1>\lambda_2>\cdots>\lambda_p$ be the Lyapunov exponents of the linearized system at $x^*=0$. We set
	\begin{equation}\label{lambplus-gamma}
		\lambda_+=\min_{\lambda_i>0}\lambda_i\quad\text{and}\quad 0<\gamma<\frac{1}{2}\min\left\{\min_{1\leq i< p}|\lambda_i-\lambda_{i+1}|,\lambda_+\right\}. 
	\end{equation}
	Note that $\gamma<\lambda_+$.
	Let $\epsilon_*\in(0,1/6)$ be sufficiently small such that
	\begin{equation}\label{def: eps}
		(1-6\epsilon)(\lambda_+-\gamma)+6\epsilon \cdot \log(1-M\alpha_{\max})>0,\quad \forall\ \epsilon\in(0,\epsilon_*). 
	\end{equation}
	The reason for such choice will become clear later (see \eqref{large S2}). For the rest of the section, we will consider a fixed $\epsilon\in(0,\epsilon_*)$. 
	
	We now state and prove several lemmas for Theorem~\ref{thm: local-growth}. First, in the following Lemma~\ref{lem: L}, we construct a stopping time $\varrho_t-1$ that is bounded almost surely and  that the component of the gradient $\bigl\lvert e_{i_{\varrho_t-1}}^{\top} \nabla f(x_{\varrho_t-1})\bigr\rvert$ is comparable with 
	$\norm{\nabla f(x_{\varrho_t-1})}$ in amplitude 
	with high probability. 
	
	\begin{lemma}\label{lem: L}
		Let $0<\mu\leq\frac{1}{\sqrt{d}}$ be a fixed constant. 
		There exists some constant $L>0$, such that for any $t\in\bN$, there exists a measurable $\varrho_t:\Omega\rightarrow \bNp$ such that $t < \varrho_t \leq t + L$ and
		\begin{equation}\label{eq: rho_t}
			\bP\Bigl(\bigl\lvert e_{i_{\varrho_t-1}}^{\top} \nabla f(x_{\varrho_t-1})\bigr\rvert \geq \mu\norm{\nabla f(x_{\varrho_t-1})}\;\Big\vert\; \mathcal{F}_{t-1}\Bigr)\geq 1-\epsilon.
		\end{equation}
	\end{lemma}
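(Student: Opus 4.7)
\textbf{Proof plan for Lemma~\ref{lem: L}.} The plan rests on the elementary observation that for any $v\in\bR^d$ the coordinate attaining $\|v\|_\infty$ satisfies $|v_i|\geq \|v\|/\sqrt{d}\geq \mu\|v\|$, since $\mu\leq 1/\sqrt{d}$. Applied to $v=\nabla f(x_s)$ this says that at every step at least one coordinate is ``good,'' so that, since $i_s$ is drawn uniformly and independently of $\mathcal{F}_{s-1}$ while $x_s$ is $\mathcal{F}_{s-1}$-measurable,
$$\bP\bigl(\bigl|e_{i_s}^\top \nabla f(x_s)\bigr|\geq \mu\|\nabla f(x_s)\|\,\bigm|\,\mathcal{F}_{s-1}\bigr)\geq \frac{1}{d},$$
regardless of the value of $x_s$ (the inequality is trivial when $\nabla f(x_s)=0$).

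First I would fix
$$L=\left\lceil \frac{\log(1/\epsilon)}{\log\bigl(d/(d-1)\bigr)}\right\rceil,$$
so that $(1-1/d)^L\leq \epsilon$, and then define the stopping time
$$\varrho_t=\inf\bigl\{s\in\{t+1,\dots,t+L\}:\bigl|e_{i_{s-1}}^\top \nabla f(x_{s-1})\bigr|\geq \mu\|\nabla f(x_{s-1})\|\bigr\},$$
with the convention $\varrho_t=t+L$ whenever the set above is empty. By construction $t<\varrho_t\leq t+L$ and $\{\varrho_t=s\}\in\mathcal{F}_{s-1}$ for every admissible $s$, so $\varrho_t$ is measurable.

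The core estimate is then a telescoping conditional probability bound. The event that \eqref{eq: rho_t} fails is contained in the event
$$B:=\bigcap_{s=t}^{t+L-1}\Bigl\{\bigl|e_{i_s}^\top \nabla f(x_s)\bigr|<\mu\|\nabla f(x_s)\|\Bigr\},$$
because on $B^c$ the infimum set defining $\varrho_t$ is nonempty and $\varrho_t-1$ selects an index where the desired inequality holds. Writing $B=\bigcap_{s=t}^{t+L-1} B_s$ and conditioning sequentially on $\mathcal{F}_{t+L-2},\mathcal{F}_{t+L-3},\dots,\mathcal{F}_{t-1}$, the displayed pointwise bound $\bP(B_s\mid\mathcal{F}_{s-1})\leq 1-1/d$ gives
$$\bP(B\mid \mathcal{F}_{t-1})\leq \bigl(1-1/d\bigr)^L\leq \epsilon,$$
which is exactly \eqref{eq: rho_t}.

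I do not expect a genuine obstacle here; the argument is a standard geometric stopping-time estimate, and the only care required is that the conditional probability bound $\bP(B_s\mid\mathcal{F}_{s-1})\leq 1-1/d$ is uniform in $\omega$, which follows because $i_s$ is independent of $\mathcal{F}_{s-1}$ and because the bound $\|v\|_\infty\geq \|v\|/\sqrt{d}$ is dimension-free in the strength of its dependence on $v$.
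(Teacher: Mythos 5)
Your argument is correct and matches the paper's proof essentially line for line: both use the fact that the coordinate achieving $\|\nabla f(x_s)\|_\infty$ is always ``good'' (so each step succeeds with conditional probability at least $1/d$), define $\varrho_t$ via the first success time truncated at $L$, and bound the failure probability by $(1-1/d)^L \le \epsilon$. You spell out the telescoping conditional estimate and the explicit choice of $L$, which the paper leaves implicit, but the underlying idea and structure are the same.
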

	
	\begin{proof}
		For any $t\in\bN$, use $\ell_0$ to denote the smallest non-negative integer $\ell$, such that
		\begin{equation*}
			\ell_0 = \arg\min_{\ell} \Bigl\{ \ell \in \mathbb{N}_+ \,:\,	\bigl\lvert e_{i_{t+\ell-1}}^{\top} \nabla f(x_{t+\ell-1})\bigr\rvert\geq \mu\norm{\nabla f(x_{t+\ell-1})} \Bigr\}.
		\end{equation*}
		It is clear that $\bP(\ell_0 >\ell\mid\mathcal{F}_{t-1})\leq (1-1/d)^\ell$, since for each step the coordinate is randomly chosen. Hence, there exists some $L>0$, such that 
		\begin{equation*}
			\bP(\ell_0 \leq L\mid\mathcal{F}_{t-1})\geq 1-\epsilon.
		\end{equation*}
		We finish the proof by setting $\varrho_t = t + \min\{ \ell_0, L \}$
		which has the desired property. 
	\end{proof}
	
	\medskip
	
	We now carry out the amplification part of the finite block analysis 
	for the linearized dynamics at $x^* = 0$. To simplify expressions in the following, for $t_1<t_2$, we introduce the short-hand notation
	\begin{equation*}
		(i,\alpha)_{t_1:t_2-1}=\big((i_{t_1},\alpha_{t_1}),\dots,(i_{t_2-1},\alpha_{t_2-1})\big)\in\Omega_{t_1}\times\cdots\times\Omega_{t_2-1},
	\end{equation*}
	and the finite time transition matrix (i.e., composition of linear maps)
	\begin{equation*}
		\Phi^H\bigl((i,\alpha)_{t_1:t_2-1}\bigr)=(I-\alpha_{t_2-1}e_{i_{t_2-1}}e_{i_{t_2-1}}^{\top} H)\cdots (I-\alpha_{t_1} e_{i_{t_1}}e_{i_{t_1}}^{\top} H).
	\end{equation*}
	Recall that $(\Omega_t,\Sigma_t,\bP_t)$ is the probability space for $\mathcal{U}_{\{1,2,\dots,d\}}\times \mathcal{U}_{[\alpha_{\min},\alpha_{\max}]}$ for $t\in\bN$. We also denote $\mathcal{P}_+^H\bigl((i,\alpha)_{t_1:t_2-1}\bigr)$ as the projection operator onto the subspace spanned by the right singular vectors of $\Phi^H\bigl((i,\alpha)_{t_1:t_2-1}\bigr)$ corresponding to $d_+$ largest singular values, where $d_+=\sum_{\lambda_i>0}d_i$ and $d_i$ is the dimension of the $i$-th eigenspace as in Theorem~\ref{thm: MET} (ii) for the linearized system at $x^*$.

	As we mentioned in the proof sketch, we want $\Phi^H(S,\tau^{\varrho_t}\omega)=\Phi^H\left((i,\alpha)_{\varrho_t:\varrho_t+S-1}\right)$ to amplify $x_{\varrho_t}$, for which we need to establish a non-trivial lower-bound for the unstable component $\norm{\mathcal{P}_+^H\left((i,\alpha)_{\varrho_t:\varrho_t+S-1}\right)x_{\varrho_t}}$. 
	This is achieved by several lemmas. We will establish three lower bound in the sequel: 
	\begin{itemize}
		\item $\norm{\mathcal{P}_+^H\left(\tau^{\varrho_t}\omega\right)e_{i_{\varrho_t-1}}}$ using Lemma~\ref{lem: Omega1};
		\item $\norm{\mathcal{P}_+^H\left((i,\alpha)_{\varrho_t:\varrho_t+S-1}\right)e_{i_{\varrho_t-1}}}$ in Lemma~\ref{lem: T-Omega2}; and finally the desired
		\item $\norm{\mathcal{P}_+^H\left((i,\alpha)_{\varrho_t:\varrho_t+S-1}\right)x_{\varrho_t}}$ in Lemma~\ref{lem: nontrial-proj}.
	\end{itemize} 
	\smallskip 
	Let us first control $\norm{\mathcal{P}_+^H\left(\tau^{\varrho_t}\omega\right)e_{i_{\varrho_t-1}}}$ in the following lemma, which utilizes Assumption~\ref{Assump: non-zero-proj}.
	For simplicity of notation, in Lemma~\ref{lem: Omega1} and Lemma~\ref{lem: T-Omega2}, we state the results for $\norm{\mathcal{P}_+^H(\omega)e_i}$ and $\norm{\mathcal{P}_+^H((i,\alpha)_{0:S-1})e_j}$ instead, which is slightly more general.

	\begin{lemma}\label{lem: Omega1}
		Under Assumption~\ref{Assump: non-zero-proj}, 
		there exist $\delta>0$ and measurable $\Omega_1^\epsilon\subset\widetilde{\Omega}$, where $\widetilde{\Omega}$ is from Theorem~\ref{thm: MET}, 
		such that $\bP(\Omega_1^\epsilon)\geq 1-\epsilon$ 
		and 
		\begin{equation*}
			\norm{\mathcal{P}^H_+(\omega) e_i}\geq\delta,\quad \forall\ \omega\in\Omega_1^\epsilon,\ i\in\{1,2,\dots,d\}.
		\end{equation*}
	\end{lemma}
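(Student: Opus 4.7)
The plan is to exploit Assumption~\ref{Assump: non-zero-proj} together with basic measure-theoretic continuity. Assumption~\ref{Assump: non-zero-proj} asserts that for every fixed $i \in \{1,\dots,d\}$, the scalar-valued map $\omega \mapsto \norm{\mathcal{P}^H_+(\omega) e_i}$ is strictly positive almost surely. The measurability of this map is inherited from Theorem~\ref{thm: MET}(iii), which guarantees that $W_+^H(\cdot)$ is a measurable map from $\widetilde{\Omega}$ into the Grassmannian; the associated orthogonal projection is then a measurable matrix-valued function of $\omega$, and composing with the fixed unit vector $e_i$ and taking the Euclidean norm yields a measurable scalar.

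Given measurability and almost-sure positivity, the plan is to invoke continuity of measure from below: the nested family of events
\begin{equation*}
\Bigl\{\omega \in \widetilde{\Omega} : \norm{\mathcal{P}^H_+(\omega) e_i} \geq 1/n \Bigr\}, \qquad n \in \bNp,
\end{equation*}
increases as $n\to\infty$ to the almost-sure event $\{\omega : \norm{\mathcal{P}^H_+(\omega) e_i} > 0\}$, which has probability one. Consequently, for each $i$ there exists an integer $n_i$ such that
\begin{equation*}
\bP\Bigl(\norm{\mathcal{P}^H_+(\omega) e_i} \geq 1/n_i\Bigr) \geq 1 - \epsilon/d.
\end{equation*}

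I would then set
\begin{equation*}
\delta := \min_{1 \leq i \leq d} \frac{1}{n_i}, \qquad \Omega_1^\epsilon := \bigcap_{i=1}^{d} \Bigl\{\omega \in \widetilde{\Omega} : \norm{\mathcal{P}^H_+(\omega) e_i} \geq \delta\Bigr\}.
\end{equation*}
A union bound over the $d$ complementary events gives $\bP(\Omega_1^\epsilon) \geq 1 - d \cdot (\epsilon/d) = 1 - \epsilon$, and by construction the uniform lower bound $\norm{\mathcal{P}^H_+(\omega) e_i} \geq \delta$ holds on $\Omega_1^\epsilon$ for every $i$ simultaneously. The only nontrivial ingredient is the measurability of $\omega \mapsto \mathcal{P}^H_+(\omega)$, and I expect this to be the step that requires a little care in presentation, since it relies on combining the measurability of the Oseledets splitting from Theorem~\ref{thm: MET}(iii) with the standard fact that orthogonal projection onto a measurable family of subspaces depends measurably on the subspace. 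Everything else reduces to continuity from below of $\bP$ and a union bound over the $d$ coordinate directions.
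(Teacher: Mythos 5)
Your proof is correct and follows essentially the same route as the paper: continuity of measure from below applied to the almost-sure positivity guaranteed by Assumption~\ref{Assump: non-zero-proj}. The only cosmetic difference is that the paper applies continuity of measure once to the intersection event over all $i$ simultaneously, while you apply it coordinate-by-coordinate and then take a union bound; your explicit remarks on the measurability of $\omega \mapsto \mathcal{P}^H_+(\omega)$ are a welcome, if minor, addition.
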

	
	\begin{proof}
		Assumption~\ref{Assump: non-zero-proj} implies that 
		\begin{equation*}
			\bP\bigl(\{\omega\in\widetilde{\Omega}:\norm{\mathcal{P}^H_+(\omega) e_i}>0,\quad \forall\ i\in\{1,2,\dots,d\}\}\bigr) = 1
		\end{equation*}
		Notice that 
		\begin{multline*}
			\bigl\{\omega\in\widetilde{\Omega}:\norm{\mathcal{P}^H_+(\omega) e_i}>0,\quad \forall\ i\in\{1,2,\dots,d\}\bigr\}
			= \\ 
			=\bigcup_{n\in\bNp}\Bigl\{\omega\in\widetilde{\Omega}:\norm{\mathcal{P}^H_+(\omega) e_i}\geq \frac{1}{n},\quad \forall\ i\in\{1,2,\dots,d\}\Bigr\}.
		\end{multline*}
		The Lemma follows from continuity of measure. 
	\end{proof}

	We will then be able to handle $\norm{\mathcal{P}_+^H((i,\alpha)_{0:S-1})e_j}$ using Lemma~\ref{lem: Omega1} and the closeness between $\left(\Phi^H(S,\omega)^{\top} \Phi^H(S,\omega)\right)^{1/2S}$ with $\Lambda(\omega)$ as the former converges to the latter as $S \to \infty$ by Theorem~\ref{thm: MET}. 
	More precisely, denote the singular values of $X\in\bR^{d\times d}$ by $s_1(X)\geq s_2(X)\geq \cdots\geq s_d(X)$. Then for $S\in\bNp$ sufficiently large,
	we have 
	\begin{equation}\label{close: singular value}
		\left|\frac{1}{S}\log s_j\left(\Phi^H(S,\omega)\right)- \lambda_{\mu(j)}\right|=\left|\frac{1}{S}\log s_j\left(\Phi^H\bigl((i,\alpha)_{0:S-1}\bigr)\right)- \lambda_{\mu(j)}\right|\leq \gamma,
	\end{equation}
	for every $j\in\{1,2,\dots,d\}$, where $\lambda_1>\lambda_2>\dots>\lambda_p$ are the Lyapunov exponents from Theorem~\ref{thm: MET}, $\gamma$ is given by \eqref{lambplus-gamma}, and the map $\mu:\{1,2,\dots,d\}\rightarrow\{1,2,\dots,p\}$ satisfies that $\mu(j)=i$ if and only if $d_1+\cdots +d_{i-1}<j\leq d_1+\cdots +d_i$, so $\mu$ corresponds the index for the singular values with that of the Lyapunov exponents. 
	Moreover,
	the convergence also implies that 
	\begin{equation*}
		\norm{\mathcal{P}_+^H(S,\omega)-\mathcal{P}_+^H(\omega)}\leq\frac{\delta}{2},
	\end{equation*}
	for sufficiently large $S$, which then leads to
	\begin{equation}\label{close: singular space}
		\norm{\mathcal{P}^H_+\left(S,\omega\right)e_j}=\norm{\mathcal{P}^H_+\left((i,\alpha)_{0:S-1}\right)e_j}\geq \frac{\delta}{2},
	\end{equation}
	for every $j\in\{1,2,\dots,d\}$,
	where $\mathcal{P}^H_+(S,\omega)$ is the projection operator onto the subspace spanned by the right singular vectors of $\Phi^H(S,\omega)$ corresponding to $d_+$ largest singular values. Let 
	\begin{equation}
		\Omega^S=\bigl\{(i,\alpha)_{0:S-1}\in \Omega_0\times\cdots\times\Omega_{S-1}:\eqref{close: singular value}\text{ and }\eqref{close: singular space}\text{ hold}\bigr\}.
	\end{equation}
	The following lemma states that $\Omega^S$ has high probability for sufficiently large $S$, where with slight abuse of notation, we write $\bP(\Omega^S)=\bP\left(\Omega^S\times\left(\bigtimes_{t\geq S}\Omega_t\right)\right)$.

	\begin{lemma}\label{lem: T-Omega2}
		Under the same assumptions of Lemma~\ref{lem: Omega1}, there exists some $S_*>0$, such that for every $S \in \bNp, S \geq S_*$, it holds $\bP(\Omega^S)\geq 1-2\epsilon$.
	\end{lemma}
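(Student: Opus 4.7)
The plan is to extract both \eqref{close: singular value} and \eqref{close: singular space} from the almost sure convergence guaranteed by the multiplicative ergodic theorem (Theorem~\ref{thm: MET}), then upgrade the a.s.\ statement to a uniform in-$S$ high-probability bound via continuity of measure, and finally combine with the high-probability event $\Omega_1^\epsilon$ from Lemma~\ref{lem: Omega1} through a union bound to pick up the factor $1-2\epsilon$.

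First I would note that Theorem~\ref{thm: MET}(i) applied to $\Phi^H$ gives, on the full-measure set $\widetilde\Omega$, the convergence $M_S(\omega) := (\Phi^H(S,\omega)^{\top}\Phi^H(S,\omega))^{1/2S} \to \Lambda(\omega)$. The eigenvalues of the symmetric matrix $M_S(\omega)$ are precisely $s_j(\Phi^H(S,\omega))^{1/S}$ for $j=1,\dots,d$, while those of $\Lambda(\omega)$ are $e^{\lambda_{\mu(j)}}$ with multiplicities indexed by $\mu$. Weyl-type continuity of eigenvalues for symmetric matrices then yields $\tfrac{1}{S}\log s_j(\Phi^H(S,\omega))\to \lambda_{\mu(j)}$ almost surely, so \eqref{close: singular value} holds on $\widetilde\Omega$ once $S$ is large enough (where ``large enough'' depends on $\omega$).

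Next, to obtain \eqref{close: singular space}, I would invoke spectral perturbation theory. Because $\lambda_+>0$, the top $d_+$ eigenvalues of $\Lambda(\omega)$ are all at least $e^{\lambda_+}>1$ while the remaining eigenvalues are at most $1$, so there is a uniform multiplicative gap separating the two groups. Since $\mathcal{P}_+^H(S,\omega)$ and $\mathcal{P}_+^H(\omega)$ are precisely the spectral projectors of $M_S(\omega)$ and $\Lambda(\omega)$ onto the eigenspaces corresponding to eigenvalues exceeding this gap, the Davis--Kahan $\sin\Theta$ theorem applied to the symmetric matrices $M_S(\omega)\to \Lambda(\omega)$ gives $\|\mathcal{P}_+^H(S,\omega)-\mathcal{P}_+^H(\omega)\|\to 0$ almost surely. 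On the event $\Omega_1^\epsilon$ from Lemma~\ref{lem: Omega1}, $\|\mathcal{P}_+^H(\omega)e_j\|\geq \delta$ for every $j$, so once $\|\mathcal{P}_+^H(S,\omega)-\mathcal{P}_+^H(\omega)\|\leq \delta/2$ the triangle inequality forces $\|\mathcal{P}_+^H(S,\omega)e_j\|\geq \delta/2$, which is exactly \eqref{close: singular space}.

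Finally, let $E_{S_*}$ denote the event that both \eqref{close: singular value} and $\|\mathcal{P}_+^H(S,\omega)-\mathcal{P}_+^H(\omega)\|\leq \delta/2$ hold for every $S\geq S_*$. The sets $E_{S_*}$ are monotone increasing in $S_*$ and their union has full measure by the two a.s.\ convergences above; continuity of measure therefore lets me choose $S_*$ so that $\bP(E_{S_*})\geq 1-\epsilon$. For any $S\geq S_*$, the intersection $E_{S_*}\cap \Omega_1^\epsilon$ is contained in the $\Omega^S$-event, and a union bound gives $\bP(\Omega^S)\geq 1-2\epsilon$. The main obstacle is the projector convergence step: it relies crucially on the spectral gap $\lambda_+>0$ to separate the unstable eigenspace from the center/stable one, and on packaging the MET's convergence of $M_S$ into genuine operator-norm convergence of the spectral projector; once that is in hand, the rest is a routine continuity-of-measure plus union-bound argument.
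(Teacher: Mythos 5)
Your argument is correct and tracks the paper's proof in its essential structure: both extract \eqref{close: singular value} and the projector convergence \eqref{conv_subspace} from the multiplicative ergodic theorem plus spectral perturbation, and both upgrade the almost-sure convergence into a single high-probability threshold $S_*$ that is then intersected with $\Omega_1^\epsilon$. The only real difference is the upgrade mechanism: the paper invokes Egorov's theorem to obtain a set $\Omega_2^\epsilon\subset\Omega_1^\epsilon$ of measure at least $1-2\epsilon$ on which the convergence is uniform, whereas you define the monotone increasing events $E_{S_*}$ (where the fixed tolerances $\gamma$ and $\delta/2$ already hold for all $S\geq S_*$) and appeal to continuity of measure, then intersect with $\Omega_1^\epsilon$ via a union bound. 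These are two standard ways to say the same thing; your version is arguably a bit more elementary since you only need the specific tolerances rather than uniform convergence, while Egorov packages the step more compactly. Your explicit appeal to Davis--Kahan with the multiplicative spectral gap $e^{\lambda_+}>1\geq e^{\lambda_j}$ for $\lambda_j\leq 0$ is also a faithful unpacking of the paper's citation to standard matrix perturbation theory. No gaps.
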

	
	\begin{proof}
		For $a.e.\ \omega\in\Omega$, it follows from Theorem~\ref{thm: MET}, in particular \eqref{conv: Lamb}, and standard matrix perturbation analysis (see e.g., \cite[Theorem VI.2.1, Theorem VII.3.1]{bhatia}) that 
		\begin{equation}\label{conv_lya_component}
			\frac{1}{S}s_j(\Phi^H(S,\omega))\rightarrow\lambda_{\mu(j)},\quad S\rightarrow\infty.
		\end{equation}
		for any $j\in\{1,2,\dots,d\}$, and that
		\begin{equation}\label{conv_subspace}
			\mathcal{P}^H_+(S,\omega)\rightarrow\mathcal{P}^H_+(\omega),\quad S\rightarrow\infty.
		\end{equation}
		
		By Egorov's theorem, there exists $\Omega_2^\epsilon\subset\Omega_1^\epsilon$ with $\bP(\Omega_2^\epsilon) \geq 1 - 2 \epsilon$, such that the convergences in \eqref{conv_lya_component} and \eqref{conv_subspace} are both uniform on $\Omega_2^\epsilon$. Here $\Omega_1^\epsilon$ is as in Lemma~\ref{lem: Omega1}. Therefore, for some $S_*$ sufficiently large, we have
		\begin{equation*}\label{uniconv: singval}
			\left|\frac{1}{S}\log s_j\left(\Phi^H(S,\omega)\right)- \lambda_{\mu(j)}\right|\leq \gamma,\quad\forall\ j\in\{1,2,\dots,d\},\ S\geq S_*,\ \omega\in\Omega_2^\epsilon,
		\end{equation*}
		and
		\begin{equation}\label{uniconv: singspace}
			\norm{\mathcal{P}^H_+(S,\omega)-\mathcal{P}^H_+(\omega)}\leq \frac{\delta}{2},\quad\forall\ S\geq S_*,\ \omega\in\Omega_2^\epsilon.
		\end{equation}
		Combining Lemma~\ref{lem: Omega1} and \eqref{uniconv: singspace}, we obtain that
		\begin{equation*}
			\norm{\mathcal{P}^H_+(S,\omega)e_i}\geq \frac{\delta}{2},\quad \forall\ i\in\{1,2,\dots,d\},\ \forall\ S\geq S_*,\ \omega\in\Omega_2^\epsilon.
		\end{equation*}
		
		For any $S\geq S_*$, 
		by the definition of $\Omega^S$, it holds that 
		\begin{equation*}
			\Omega_2^\epsilon\subset\Omega^S\times\Bigl(\bigtimes_{t\geq S}\Omega_t\Bigr),
		\end{equation*}
		which implies the desired estimate 
		\begin{equation*}
			\bP(\Omega^S)=\bP\biggl(\Omega^S\times\Bigl(\bigtimes_{t\geq S}\Omega_t\Bigr)\biggr)\geq\bP(\Omega_2^\epsilon)\geq 1-2\epsilon. 
		\end{equation*}
	\end{proof}
	
	Note that $\alpha_{\varrho_t-1}\sim\mathcal{U}_{[\alpha_{\min},\alpha_{\max}]}$ is independent of $\mathcal{F}_{\varrho_t-2}$, $i_{\varrho_t-1}$, and $(i,\alpha)_{\varrho_t:\varrho_t+S-1}$. 
	The next lemma shows that with high probability, the choice of $\alpha_{\varrho_t-1}$ will lead to a nontrivial orthogonal projection of $x_{\varrho_t}$ onto the unstable subspace of $\Phi^H(S,\tau^{\varrho_t}\omega)=\Phi^H\bigl((i,\alpha)_{\varrho_t:\varrho_t+S-1}\bigr)$.
	
	\begin{lemma}\label{lem: nontrial-proj}
		For any $S\in\bNp$, $x_{\varrho_t-1}$, $i_{\varrho_t-1}$, 
		and $(i,\alpha)_{\varrho_t:\varrho_t+S-1} \in\Omega^S$, there exists $I\subset[\alpha_{\min},\alpha_{\max}]$ with $m(I)\geq (1-\epsilon)(\alpha_{\max}-\alpha_{\min})$ where $m(\cdot)$ is the Lebesgue measure,  such that for any $\alpha_{\varrho_t-1}\in I$, it holds that 
		\begin{equation}\label{eq: nontrial-proj}
			\norm{\mathcal{P}^H_+\left((i,\alpha)_{\varrho_t:\varrho_t+S-1}\right)x_{\varrho_t}}\geq \frac{\epsilon\delta (\alpha_{\max}-\alpha_{\min})}{4}\bigl\lvert e_{i_{\varrho_t-1}}^{\top} \nabla f(x_{\varrho_t-1})\bigr\rvert.
		\end{equation}
	\end{lemma}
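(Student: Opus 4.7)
The plan is to exploit the fact that, with $P := \mathcal{P}^H_+\bigl((i,\alpha)_{\varrho_t:\varrho_t+S-1}\bigr)$, $i := i_{\varrho_t-1}$ and $g := e_i^{\top} \nabla f(x_{\varrho_t-1})$ all held fixed (they are measurable with respect to data independent of $\alpha_{\varrho_t-1}$), the quantity $P x_{\varrho_t}$ is an \emph{affine} function of the scalar $\alpha_{\varrho_t-1}$. Indeed, from the update rule,
\begin{equation*}
    P x_{\varrho_t} \;=\; P x_{\varrho_t-1} \;-\; \alpha_{\varrho_t-1}\, g\, P e_i.
\end{equation*}
Thus $\alpha \mapsto \|P x_{\varrho_t}(\alpha)\|$ is a convex function, and in particular the sublevel set where this norm is small must be an interval of small diameter, provided the coefficient $g\, P e_i$ is not too small.

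First I would handle the trivial case $g = 0$: then $x_{\varrho_t} = x_{\varrho_t-1}$ and the right-hand side of \eqref{eq: nontrial-proj} vanishes, so $I = [\alpha_{\min},\alpha_{\max}]$ works. Assuming $g \neq 0$, I would invoke the hypothesis $(i,\alpha)_{\varrho_t:\varrho_t+S-1} \in \Omega^S$ together with \eqref{close: singular space} to obtain the lower bound $\|P e_i\| \geq \delta/2$. Set
\begin{equation*}
    c \;:=\; \tfrac{\epsilon \delta (\alpha_{\max}-\alpha_{\min})}{4}\,|g|,
    \qquad
    J \;:=\; \bigl\{\alpha \in [\alpha_{\min},\alpha_{\max}] : \|P x_{\varrho_t}(\alpha)\| < c \bigr\}.
\end{equation*}
For any two $\alpha_1, \alpha_2 \in J$, the triangle inequality combined with the affine dependence gives
\begin{equation*}
    |\alpha_1 - \alpha_2|\,|g|\,\|P e_i\|
    \;=\; \bigl\| P x_{\varrho_t}(\alpha_1) - P x_{\varrho_t}(\alpha_2) \bigr\|
    \;\leq\; 2c,
\end{equation*}
so $J$ has diameter at most $2c/(|g|\,\|P e_i\|) \leq 4c/(|g|\delta) = \epsilon(\alpha_{\max}-\alpha_{\min})$. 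Therefore the one-dimensional Lebesgue measure satisfies $m(J) \leq \epsilon(\alpha_{\max}-\alpha_{\min})$, and setting $I := [\alpha_{\min},\alpha_{\max}] \setminus J$ yields $m(I) \geq (1-\epsilon)(\alpha_{\max}-\alpha_{\min})$ and the desired bound \eqref{eq: nontrial-proj} for every $\alpha_{\varrho_t-1} \in I$.

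There is no real obstacle here beyond being careful about what quantities depend on $\alpha_{\varrho_t-1}$; the key conceptual point, already encoded in the statement, is that $P$ is built from the \emph{future} samples $(i,\alpha)_{\varrho_t:\varrho_t+S-1}$ and so is independent of $\alpha_{\varrho_t-1}$, which is what makes the affine-in-$\alpha$ argument legitimate. The role of the randomization of stepsizes thus manifests exactly as foreshadowed in Remark~\ref{random-alpha}: a uniformly random $\alpha_{\varrho_t-1}$ almost surely avoids the unique degenerate value where the component in $W_+^H$ could collapse.
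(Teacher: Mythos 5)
Your proof is correct and takes essentially the same route as the paper: you write $\mathcal{P}_+^H x_{\varrho_t}$ as the affine map $\alpha \mapsto P x_{\varrho_t-1} - \alpha g\, P e_{i_{\varrho_t-1}}$, use $\|P e_{i_{\varrho_t-1}}\| \geq \delta/2$ from the definition of $\Omega^S$ and \eqref{close: singular space}, and conclude that the ``bad'' set of stepsizes has diameter at most $\epsilon(\alpha_{\max}-\alpha_{\min})$. The only cosmetic difference is that the paper phrases this as a preimage of a ball $\mathcal{B}_r(y_2)$ under the map $\alpha \mapsto \alpha y_1$ (noting the preimage is an interval of length $\leq 2r/\|y_1\|$), whereas you bound the diameter directly by the triangle inequality; both are the same computation.
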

	
	\begin{proof}
		We assume that $\bigl\lvert e_{i_{\varrho_t-1}}^{\top} \nabla f(x_{\varrho_t-1})\bigr\rvert\neq 0$; otherwise the result is trivial. 
		
		For simplicity of notation, we write 
		\begin{equation*}
			\begin{aligned}
				\mathcal{P}^H_+\left((i,\alpha)_{\varrho_t:\varrho_t+S-1}\right)x_{\varrho_t} & = 
				\mathcal{P}^H_+\left((i,\alpha)_{\varrho_t:\varrho_t+S-1}\right)x_{\varrho_t-1} \\
				& \qquad 
				- \alpha_{\varrho_t - 1} e_{i_{\varrho_t-1}}^{\top}\nabla f(x_{\varrho_t-1})\mathcal{P}^H_+\left((i,\alpha)_{\varrho_t:\varrho_t+S-1}\right)e_{i_{\varrho_t-1}} \\
				& =: y_2-\alpha_{\varrho_t-1} y_1,
			\end{aligned}
		\end{equation*}
		where the last line defines $y_1$ and $y_2$. Using the short-hand notation 
		\begin{equation*}
			r=\frac{\epsilon\delta (\alpha_{\max}-\alpha_{\min})}{4}\bigl\lvert e_{i_{\varrho_t-1}}^{\top} \nabla f(x_{\varrho_t-1})\bigr\rvert, 
		\end{equation*}
		we observe then \eqref{eq: nontrial-proj} holds if and only if $\alpha_{\varrho_t-1} y_1$ is not located in a ball with radius $r$ centered at $y_2$.
		
		It follows from the definition of $\Omega^S$ and \eqref{close: singular space} that $ \norm{\mathcal{P}^H_+\left((i,\alpha)_{\varrho_t:\varrho_t+S-1}\right)e_{i_{\varrho_t-1}}} \geq \frac{\delta}{2}$, 
		which then leads to
		\begin{equation*}
			\norm{y_1}\geq \frac{\delta}{2}\bigl\lvert e_{i_{\varrho_t-1}}^{\top} \nabla f(x_{\varrho_t-1})\bigr\rvert=:\frac{2r}{\epsilon(\alpha_{\max}-\alpha_{\min})}.
		\end{equation*}
		Thus, the set of $\alpha_{\varrho_t-1}$ such that $\alpha_{\varrho_t -1} y_1 \in \mathcal{B}_r(y_2)$ consists of an interval $J$ in $\bR$ with $\|\sup(J)\cdot y_1 - \inf(J)\cdot y_1\|\leq 2r$ as the diameter of $\mathcal{B}_r(y_2)$ is $2r$, which implies $m(J)\leq 2r/\norm{y_1}\leq \epsilon(\alpha_{\max}-\alpha_{\min})$. The lemma is proved then by setting $I=[\alpha_{\max}-\alpha_{\min}]\backslash J$.
	\end{proof}

	With Lemma~\ref{lem: L}--\ref{lem: nontrial-proj}, we now prove Theorem~\ref{thm: local-growth} which relies on approximation of the nonlinear dynamics by linearization and the amplification from the finite block analysis for the linearized system.

	\begin{proof}[Proof of Theorem~\ref{thm: local-growth}]
		Without loss of generality, we will assume $t=0$ in the proof to simplify notation.
		Since $H=\nabla^2 f(x^*)$ is non-degenerate, we can take a neighborhood 
		$U$ of $x^*=0$ and some fixed $\sigma > 0$ such that
		\begin{equation}\label{eq: sigma}
			\norm{\nabla f(x)}\geq \sigma\norm{x},\quad \forall\ x\in U.    
		\end{equation}
		Assumption~\ref{Assump: Hess_bdd} implies that
		\begin{equation*}
			\norm{\nabla f(x)}=\norm{\nabla f(x)-\nabla f(x^*)}\leq M\norm{x-x^*}=M\norm{x},\quad\forall\ x\in\bR^d.
		\end{equation*}
		Using the above inequality and $\alpha_{\max}<1/M$, it holds for every $\omega\in\Omega$ and $t'\in\bN$ that 
		\begin{equation}\label{r-minus}
			\begin{split}
				\norm{x_{t'+1}}&=\norm{x_{t'}-\alpha_{t'} e_{i_{t'}}e_{i_{t'}}^{\top}\nabla f(x_{t'})} \\
				& \geq \norm{x_{t'}}-\alpha_{t'}\norm{e_{i_{t'}}e_{i_{t'}}^{\top}}\cdot \norm{\nabla f(x_{t'})}\\
				&\geq (1-M\alpha_{\max})\norm{x_{t'}},
			\end{split}
		\end{equation}
		and similarly that
		\begin{equation*}
			\norm{x_{t'+1}}\leq(1+M\alpha_{\max})\norm{x_{t'}}.
		\end{equation*}
		We thus define
		\begin{equation*}
			r_-:=1-M\alpha_{\max}\quad\text{and}\quad r_+:=1+M\alpha_{\max}, 
		\end{equation*}
		so that
		\begin{equation}\label{eq:compareiterate}
			r_- \norm{x_{t'}} \leq \norm{x_{t'+1}} \leq r_+ \norm{x_{t'}}.
		\end{equation}
		
		We now choose the time duration $S$ large enough in the finite block analysis to guarantee significant amplification. More specifically, we choose $S$ so large that $S \geq S_*$ ($S_*$ defined in Lemma~\ref{lem: T-Omega2}) and that the following two inequalities hold: 
		\begin{equation}\label{large S1}
			\exp(S(\lambda_+-\gamma))\cdot \frac{\epsilon\delta \mu\sigma (r_-)^{L-1}(\alpha_{\max}-\alpha_{\min})}{8}\geq
			(r_+)^{L},
		\end{equation}
		and
		\begin{equation}\label{large S2}
			(1-6\epsilon)\left(S(\lambda_+-\gamma)+\log \frac{\epsilon\delta \mu\sigma (r_-)^{2(L-1)}(\alpha_{\max}-\alpha_{\min})}{8}\right)+6\epsilon(L+S)\log r_->0,
		\end{equation}
		where $L$ is the upper bound defined in Lemma~\ref{lem: L}, $\mu\leq 1/\sqrt{d}$ is a fixed constant as in Lemma~\ref{lem: L}, $\delta$ is from Lemma~\ref{lem: Omega1}, and $\sigma$ is set in \eqref{eq: sigma}. 
		Thanks to \eqref{def: eps} for our choice of $\epsilon$ and that $\gamma < \lambda_+$ from \eqref{lambplus-gamma}, 
		\eqref{large S1} and \eqref{large S2} are satisfied for sufficiently large $S$. 
		
		Next, we show that, for any $S$ sufficiently large as above, there exists a convex neighborhood $U_1\subset U$ of $x^*=0$, such that for any $t'\in\bN$, any $x_{t'}\in U_1$, and any $(i,\alpha)_{t':t'+S-1}$, it holds that
		\begin{equation}\label{block-error}
			\norm{x_{t'+S}} \geq \norm{\Phi^H\bigl((i,\alpha)_{t':t'+S-1}\bigr)x_{t'}}-\norm{x_{t'}}.
		\end{equation}
		We first define a convex neighborhood $U_0 \subset U$ of $x^*=0$ such that
		\begin{multline*}
			\norm{\left(x-\alpha e_i e_i^{\top}\nabla f(x)\right)-\left(I-\alpha e_i e_i^{\top}H\right)x}=\norm{\alpha e_i e_i^{\top}\left(\nabla f(x)-H x\right)}\\
			=\norm{\alpha e_i e_i^{\top}\int_0^1 (\nabla^2 f(\eta x)-Hx) \mathrm{d} \eta}\leq \frac{1}{S(r_+)^{S-1}} \norm{x},
		\end{multline*}
		for any $x \in U_0$, any $i\in\{1,2,\dots,d\}$, and any $\alpha\in[\alpha_{\min},\alpha_{\max}]$.
		Applying the inequality $S$ times for $x_{t'}\in U_1 = (r_+)^{-(S-1)}U_0$, we have,
		\begin{equation*}
			\begin{split}
				&\norm{x_{t'+S}-\Phi^H\bigl((i,\alpha)_{t':t'+S-1}\bigr)x_{t'}}\\
				\leq & \norm{x_{t'+S}-\left(I-\alpha_{t'+S-1} e_{i_{t'+S-1}}e_{i_{t'+S-1}}^{\top}H\right)x_{t'+S-1}}\\
				&+\norm{I-\alpha_{t'+S-1} e_{i_{t'+S-1}}e_{i_{t'+S-1}}^{\top}H}\cdot \norm{x_{t'+S-1}-\Phi^H\bigl((i,\alpha)_{t':t'+S-2}\bigr)x_{t'}}\\
				\leq & \frac{1}{S(r_+)^{S-1}} \norm{x_{t'+S-1}}+r_+\norm{x_{t'+S-1}-\Phi^H\bigl((i,\alpha)_{t':t'+S-2}\bigr)x_{t'}}\\
				\leq& \frac{1}{S(r_+)^{S-1}} \left(\norm{x_{t'+S-1}}+r_+\norm{x_{t'+S-2}}+\cdots+(r_+)^{S-1}\norm{x_{t'}}\right)\\
				\leq& \norm{x_{t'}},
			\end{split}
		\end{equation*}
		and hence, inequality \eqref{block-error}.
		
		Setting $V=(r_+)^{-(L-1)} U_1$, we then have $x_{\varrho_0-1}\in U$ for any $x_0\in V$, which implies that $\norm{\nabla f(x_{\varrho_0-1})}\geq\sigma\norm{x_{\varrho_0-1}}$ as $\varrho_0\leq L$. 
		According to Lemma~\ref{lem: L}--\ref{lem: nontrial-proj}, for any given $x_0\in V$, with probability at least $1-4\epsilon$, we have $(i,\alpha)_{\varrho_0:\varrho_0+S-1}\in\Omega^S$, and  the followings hold:
		\begin{align}
			\label{ineq1} & \bigl\lvert e_{i_{\varrho_0-1}}^{\top} \nabla f(x_{\varrho_0-1})\bigr\rvert\geq \mu\norm{\nabla f(x_{\varrho_0-1})}, \\
			\label{ineq2} & \norm{\mathcal{P}^H_+\left((i,\alpha)_{\varrho_0:\varrho_0+S-1}\right)x_{\varrho_0}}\geq \frac{\epsilon\delta (\alpha_{\max}-\alpha_{\min})}{4}\bigl\lvert e_{i_{\varrho_0-1}}^{\top} \nabla f(x_{\varrho_0-1})\bigr\rvert,
		\end{align}
		where the probability is the marginal probability on $(i,\alpha)_{0:L+S-1}\in\Omega_0\times\cdots\times \Omega_{L+S-1}$.  
		
		\smallskip 
		
		Recall $\lambda_+$ and $\gamma$ in \eqref{lambplus-gamma}, and $d_+=\sum_{\lambda_i>0}d_i$. 
		It follows from \eqref{close: singular value} of the construction of the set $\Omega^S$ that
		\begin{equation*}
			\frac{1}{S}\log s_j\left(\Phi^H\left((i,\alpha)_{\varrho_0:\varrho_0+S-1}\right)\right)\geq \lambda_{\mu(j)}-\gamma\geq \lambda_+-\gamma,\quad\forall\ j\leq d_+.
		\end{equation*}
		This is to say that the $d_+$ largest singular values of $\Phi^H\left((i,\alpha)_{\varrho_0:\varrho_0+S-1}\right)$
		are all greater than or equal to $\exp(S(\lambda_+-\gamma))$. 
		Therefore, it holds that 
		\begin{equation}\label{linear-growth}
			\begin{split}
				\norm{\Phi^H\left((i,\alpha)_{\varrho_0:\varrho_0+S-1}\right)x_{\varrho_0}}&\geq \norm{\Phi^H\left((i,\alpha)_{\varrho_0:\varrho_0+S-1}\right)\mathcal{P}^H_+\left((i,\alpha)_{\varrho_0:\varrho_0+S-1}\right)x_{\varrho_0}}\\
				&  \leftstackrel{\eqref{ineq2}}{\geq} \exp(S(\lambda_+-\gamma))\cdot \frac{\epsilon\delta (\alpha_{\max}-\alpha_{\min})}{4}\bigl\lvert e_{i_{\varrho_0-1}}^{\top} \nabla f(x_{\varrho_0-1})\bigr\rvert\\
				&  \leftstackrel{\eqref{ineq1}}{\geq}
				\exp(S(\lambda_+-\gamma))\cdot \frac{\epsilon\delta\mu (\alpha_{\max}-\alpha_{\min})}{4}\norm{\nabla f(x_{\varrho_0-1})},
			\end{split}
		\end{equation}
		where the first inequality follows from the fact that $\Phi^H\left((i,\alpha)_{\varrho_0:\varrho_0+S-1}\right)\mathcal{P}^H_+\left((i,\alpha)_{\varrho_0:\varrho_0+S-1}\right)x_{\varrho_0}$ and $\Phi^H\left((i,\alpha)_{\varrho_0:\varrho_0+S-1}\right)\left(I-\mathcal{P}^H_+\left((i,\alpha)_{\varrho_0:\varrho_0+S-1}\right)\right)x_{\varrho_0}$ are orthogonal. Combining \eqref{block-error} and \eqref{linear-growth}, we obtain that
		\begin{equation*}
			\begin{split}
				\norm{x_{\varrho_0+S}} & \geq \exp(S(\lambda_+-\gamma))\cdot \frac{\epsilon\delta \mu(\alpha_{\max}-\alpha_{\min})}{4}\norm{\nabla f(x_{\varrho_0-1})}-\norm{x_{\varrho_0}}\\
				& \leftstackrel{\eqref{eq:compareiterate}}{\geq} \left(\exp(S(\lambda_+-\gamma))\cdot \frac{\epsilon\delta \mu\sigma (r_-)^{L-1}(\alpha_{\max}-\alpha_{\min})}{4}-(r_+)^{L}\right)\cdot\norm{x_0}\\
				& \leftstackrel{\eqref{large S1}}{\geq} \exp(S(\lambda_+-\gamma))\cdot \frac{\epsilon\delta \mu\sigma (r_-)^{L-1}(\alpha_{\max}-\alpha_{\min})}{8}\cdot\norm{x_0}.
			\end{split}
		\end{equation*}
		Therefore, it holds that
		\begin{equation*}
			\begin{aligned}
				\norm{x_{L+S}} & \; \leftstackrel{\eqref{eq:compareiterate}}{\geq} (r_-)^{L-1} \norm{x_{\varrho_0+S}} \\
				& \; \geq \exp(S(\lambda_+-\gamma))\cdot \frac{\epsilon\delta \mu\sigma (r_-)^{2(L-1)}(\alpha_{\max}-\alpha_{\min})}{8}\cdot\norm{x_0}.
			\end{aligned}
		\end{equation*}
		We finally arrive at \eqref{local-growth} by setting $T = L + S$ and combining the above with \eqref{large S2}.
	\end{proof}

	\subsection{Proof of main results}\label{sec: proofmain}

	In this section, we first prove the following theorem, which relies on the local amplification with high probability established in Theorem~\ref{thm: local-growth}. The main result Theorem~\ref{thm: main} will follow as an immediate corollary since $\calA$ is countable and strict saddle points are isolated. 
	
	\begin{theorem}\label{thm: single-pt}
		Suppose that Assumption~\ref{Assump: Hess_bdd}, \ref{Assump: non-degenerate}, and \ref{Assump: non-zero-proj} hold and that $0<\alpha_{\min}<\alpha_{\max}<1/M$. Then for every $x^*\in\calA$ 
		and every $x_0\in\bR^d\backslash\{x^*\}$, it holds that
		\begin{equation*}
			\bP(\calC(x^*,x_0))=0.
		\end{equation*}
	\end{theorem}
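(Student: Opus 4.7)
The plan is to apply Theorem~\ref{thm: local-growth} over successive blocks of length $T$: on any sample path that stays close to $x^*$ forever, amplification events must occur with asymptotic frequency at least $1-4\epsilon$ by a martingale strong law, while permanent confinement inside a bounded neighborhood of $x^*$ forces that frequency to be at most $1-6\epsilon$. Since $1-4\epsilon>1-6\epsilon$, the set of such samples must have probability zero.

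After translating so that $x^*=0$, I would fix $\epsilon\in(0,\epsilon_*)$, pick $T\geq T_*(\epsilon)$, let $V$ be the associated neighborhood shrunk so that $V\subseteq \overline{B}_R(0)$ for some $R>0$, and set $r_-=1-M\alpha_{\max}$ and $\beta=\tfrac{6\epsilon}{1-6\epsilon}\lvert\log r_-\rvert$. Since convergence to $x^*$ forces the iterate to enter and remain in $V$ for all large $t$,
\begin{equation*}
\calC(x^*,x_0)\subseteq\bigcup_{N\in\bN}B_N,\qquad B_N:=\bigl\{\omega\in\Omega:\varphi(t,\omega)x_0\in V\text{ for all }t\geq NT\bigr\},
\end{equation*}
so it suffices to prove $\bP(B_N)=0$ for every $N$. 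Define the block log-increments $\xi_k:=\log\lVert x_{NT+(k+1)T}\rVert-\log\lVert x_{NT+kT}\rVert$, the amplification events $A_k:=\{\xi_k\geq\beta T\}$, and $S_K:=\sum_{k<K}\mathbf{1}_{A_k}$.

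The upper bound on $S_K$ comes from the deterministic one-step inequality \eqref{r-minus}, which gives $\xi_k\geq -T\lvert\log r_-\rvert$ always, and $\xi_k\geq\beta T$ on $A_k$. Telescoping and using $\lVert x_{NT+KT}\rVert\leq R$ on $B_N$ (together with the almost sure positivity $\lVert x_{NT}\rVert>0$, which follows from absolute continuity of the stepsize distribution and $x_0\neq x^*$) force $\limsup_K S_K/K\leq \lvert\log r_-\rvert/(\beta+\lvert\log r_-\rvert)=1-6\epsilon$ almost surely on $B_N$. For the lower bound, introduce the stopping time $\tau:=\inf\{k\geq 0:x_{NT+kT}\notin V\}$, so that $\{\tau>k\}\in\mathcal{F}_{NT+kT-1}$ and $B_N=\{\tau=\infty\}$. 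On $\{\tau>k\}$, Theorem~\ref{thm: local-growth} yields $\bP(A_k\mid\mathcal{F}_{NT+kT-1})\geq 1-4\epsilon$, so $D_k:=\mathbf{1}_{A_k}\mathbf{1}_{\{\tau>k\}}-\bE\bigl[\mathbf{1}_{A_k}\mathbf{1}_{\{\tau>k\}}\mid\mathcal{F}_{NT+kT-1}\bigr]$ is a bounded martingale difference sequence; Azuma--Hoeffding combined with Borel--Cantelli gives $K^{-1}\sum_{k<K}D_k\to 0$ almost surely, and on $B_N$ this translates to $\liminf_K S_K/K\geq 1-4\epsilon$. Comparing the two frequency estimates forces $\bP(B_N)=0$, and summing over $N$ finishes the proof.

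The main technical obstacle is that the conditional amplification estimate of Theorem~\ref{thm: local-growth} is only available on the event $\{x_{NT+kT}\in V\}$, whereas permanent confinement $B_N$ is a tail event that is not $\mathcal{F}_{NT+kT-1}$-measurable. The stopping time $\tau$ is the device that reconciles the two: it localizes the conditional estimate to an adapted framework, while coinciding with the unstopped sequence on $B_N$, thereby permitting the standard martingale strong law to close the argument.
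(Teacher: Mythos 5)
Your argument is correct and rests on the same core strategy as the paper: on trajectories confined near $x^*$, compare the frequency of amplification blocks guaranteed by Theorem~\ref{thm: local-growth} (at least $1-4\epsilon$) against the maximal frequency compatible with remaining in the bounded set $V$ (at most $1-6\epsilon$, from the telescoping inequality). The technical execution differs in two respects. The paper works directly with the exit time $\rho$ and shows $\bP(\rho>t)\to 0$ by stochastically dominating the amplification counter with a $\text{Binomial}(t,1-4\epsilon)$ variable and invoking a weak law of large numbers (Lemma~\ref{lem: binomial}); you instead decompose $\calC(x^*,x_0)\subseteq\bigcup_N B_N$ and establish $\bP(B_N)=0$ via an almost-sure frequency comparison using Azuma--Hoeffding plus Borel--Cantelli. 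The explicit $B_N$ decomposition is a welcome rigorization of the paper's terse reduction ``it suffices to show that for any $x_0\in V\backslash\{x^*\}$\ldots'', and the martingale strong law is a heavier but equally valid tool than the paper's finite-$t$ binomial tail bound. Two small remarks: the positivity $\lVert x_{NT}\rVert>0$ does not require absolute continuity of the stepsize distribution — it is a deterministic consequence of the one-step bound $\lVert x_{t+1}\rVert\geq(1-M\alpha_{\max})\lVert x_t\rVert>0$ (i.e., \eqref{r-minus}) together with $x_0\neq x^*$; and, strictly speaking, $B_N\subseteq\{\tau=\infty\}$ rather than equality (since $B_N$ controls all $t\geq NT$, not only multiples of $T$), but this is harmless because your argument in fact proves $\bP(\tau=\infty)=0$, which is stronger and suffices.
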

	
	\begin{proof}
		Without loss of generality, we assume that $x^*=0$. Conditioned on $\mathcal{F}_{t-1}$ with $x_t\in V$, where $V$ can be assumed to be bounded, Theorem~\ref{thm: local-growth} states that with probability at least $1-4\epsilon$, 
		\begin{equation*}
			\norm{x_{t+T}} \geq A \norm{x_t},
		\end{equation*}
		where to simplify notation we denote 
		\begin{equation*}
			A := \exp\biggl(\frac{6\epsilon}{1 - 6 \epsilon} \bigl\lvert \log(1-M\alpha_{\max}) \bigr\rvert \, T\biggr)
		\end{equation*}
		the amplification factor appearing on the right-hand side of \eqref{local-growth}. 
		Notice also that, due to \eqref{eq:compareiterate}, we always have
		\begin{equation*}
			\norm{x_{t+T}}\geq (r_-)^{T}\norm{x_t}.
		\end{equation*}
		It suffices to show that for any $x_0\in V\backslash\{x^*\}$, with probability $1$ there exists some $t\in\bNp$ such that $x_t\notin V$.
		
		Let us consider the iterates every $T$ steps: Denote $y_t=x_{T t}$ and $\mathcal{G}_t=\mathcal{F}_{T t-1}$ for $t\in\bN$. Denote stopping time
		\begin{equation*}
			\rho=\inf\{t\in\bN:y_t\notin V\},
		\end{equation*}
		it suffices to show that $\bP(\rho<\infty)=1$. We define a sequence of random variables $I_t$ as follows:
		\begin{equation*}
			I_t(\omega)=\begin{cases}1,&\text{if }\norm{y_{t+1}}\geq A\norm{y_t},\\
				0,&\text{otherwise}.\end{cases}
		\end{equation*}
		By the discussion in the beginning of the proof, we have  
		\begin{equation*}
			\bP(I_t = 1\ |\ \mathcal{G}_t, t<\rho)\geq 1-4\epsilon,
		\end{equation*}
		and moreover, setting $S_t(\omega)=\sum_{0\leq s<t} I_t(\omega)$, we have for $t<\rho(\omega)$,
		\begin{equation*}
			\frac{\norm{y_t}}{\norm{y_0}} \geq A^{S_t(\omega)} \cdot (r_-)^{T(t-S_t(\omega))}.
		\end{equation*}
		Denote $R:=\sup_{x\in V}\norm{x}<\infty$. Since  $(1-5\epsilon)\log A+5\epsilon T\log r_->0$, there exists $t_*\in\bN$, such that
		\begin{equation*}
			\left(A^{1-5\epsilon}\cdot (r_-)^{5\epsilon T}\right)^t>\frac{R}{\norm{y_0}},\quad\forall\ t\geq t_*.
		\end{equation*}
		Therefore, for any $t\geq t_*$, it holds that 
		\begin{equation*}
			\bP(\rho> t) = \bP\bigl(\rho> t,S_t\leq (1-5\epsilon)t\bigr)\leq \sum_{i\leq (1-5\epsilon)t} \binom{t}{i} (1-4\epsilon)^i (4\epsilon)^{t-i}.
		\end{equation*}
		As we will show in the next lemma that the right-hand side of above goes to $0$ as $t \to \infty$, and thus $\lim_{t\rightarrow\infty}\bP(\rho> t)=0$, which implies that $\bP(\rho<\infty)=1$.
	\end{proof}
	
	\begin{lemma}\label{lem: binomial}
		For any $\epsilon\in(0,1/4)$, it holds that
		\begin{equation*}
			\lim_{t\rightarrow\infty}\sum_{i\leq(1-5\epsilon)t} \binom{t}{i} (1-4\epsilon)^i (4\epsilon)^{t-i}=0.
		\end{equation*}
	\end{lemma}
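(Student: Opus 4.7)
The plan is to recognize the sum as a Binomial tail probability and then apply a standard concentration estimate. Let $X_1,X_2,\dots,X_t$ be i.i.d.\ Bernoulli random variables with success probability $p=1-4\epsilon$, and set $X=X_1+\cdots+X_t\sim \mathrm{Binomial}(t,1-4\epsilon)$. Then the $i$-th term of the sum is exactly $\bP(X=i)$, so
\begin{equation*}
\sum_{i\leq(1-5\epsilon)t}\binom{t}{i}(1-4\epsilon)^i(4\epsilon)^{t-i}=\bP\bigl(X\leq (1-5\epsilon)t\bigr).
\end{equation*}
Since $\bE X=(1-4\epsilon)t$ and $(1-5\epsilon)t=\bE X-\epsilon t$ with $\epsilon>0$, this is a lower-tail deviation from the mean of size linear in $t$.

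Next, I would bound this probability via Hoeffding's inequality: because each $X_i$ takes values in $[0,1]$,
\begin{equation*}
\bP\bigl(X\leq \bE X-\epsilon t\bigr)\leq \exp(-2\epsilon^2 t),
\end{equation*}
which tends to $0$ as $t\to\infty$. Alternatively, one can invoke the weak law of large numbers, since $X/t\to 1-4\epsilon$ in probability and $1-5\epsilon<1-4\epsilon$ is a strictly smaller threshold, or use a Chernoff bound via the Bernoulli moment generating function to obtain the same exponential decay.

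There is no real obstacle here beyond naming the right inequality; the only thing to double-check is the sign of the gap, namely that the cutoff $(1-5\epsilon)t$ lies strictly below the mean $(1-4\epsilon)t$, which holds because $\epsilon>0$. Hoeffding's inequality then yields the conclusion immediately.
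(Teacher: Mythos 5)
Your proof is correct and follows essentially the same approach as the paper: both interpret the sum as a binomial lower-tail probability with cutoff strictly below the mean and then apply a concentration argument. The paper uses the weak law of large numbers (which you also note as an alternative), whereas you lead with Hoeffding's inequality, yielding a quantitative exponential rate that the paper's argument does not require.
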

	
	\begin{proof}
		Let $X_0,X_1,X_2,\dots$ be a sequence of i.i.d. random variables with $X_i$ being a Bernoulli random variable with expectation $1-4\epsilon$. Denote the average $\bar{X}_t=\frac{1}{t} \sum_{0\leq s<t} X_s$. The weak law of large numbers yields that
		\begin{equation*}
			\sum_{i\leq(1-5\epsilon)t} \binom{t}{i} (1-4\epsilon)^i (4\epsilon)^{t-i}=\bP\left(\bar{X}_t\leq 1-5\epsilon\right)\leq \bP\left(|\bar{X}_t-\bE X_0|\geq \epsilon\right)\rightarrow 0,
		\end{equation*}
		as $t\rightarrow\infty$.
	\end{proof}

	The main theorem then follows directly from Theorem~\ref{thm: single-pt}.
	
	\begin{proof}[Proof of Theorem~\ref{thm: main}]
		Assumption~\ref{Assump: non-degenerate} guarantees that, in a small neighborhood of $x^*$, the gradient $\nabla f(x)=\nabla^2 f(x^*)(x-x^*)+o(\norm{x-x^*})$ is non-vanishing as long as $x\neq x^*$, which implies that $x^*$ is an isolated stationary point. Therefore, $\calA$ is countable. Then Theorem~\ref{thm: main} follows directly from Theorem~\ref{thm: single-pt} and the countability of $\calA$.
	\end{proof}

	\medskip 
	
	We now prove the global convergence, i.e., Corollary~\ref{cor: global-conv}, for which we will show that Algorithm~\ref{alg:random CGD} converges to a critical point of $f$ with some appropriate assumptions. We first show that the limit of each convergent subsequence of $\{x_t\}_{t\in\bN}$ is a critical point of $f$. 
	
	\begin{proposition}\label{prop: converge-stationary}
		If Assumption~\ref{Assump: Hess_bdd} holds and $0<\alpha_{\min}<\alpha_{\max}<1/M$, 
		for any $x_0\in\bR^d$ with bounded level set $L(x_0)=\{x\in\bR^d:f(x)\leq f(x_0)\}$, with probability $1$, every accumulation point of $\{x_t\}_{t\in\bN}$ is in $\crit$.
	\end{proposition}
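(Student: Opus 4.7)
\smallskip

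\noindent\textbf{Proof plan for Proposition~\ref{prop: converge-stationary}.}
The plan is a standard descent-plus-sampling argument. First, I would use the descent lemma from $M$-smoothness: for any $t$,
\begin{equation*}
f(x_{t+1}) \leq f(x_t) + \langle \nabla f(x_t), x_{t+1}-x_t\rangle + \frac{M}{2}\norm{x_{t+1}-x_t}^2 = f(x_t) - \alpha_t\Bigl(1-\frac{M\alpha_t}{2}\Bigr)\bigl(\partial_{i_t} f(x_t)\bigr)^2.
\end{equation*}
Since $\alpha_t \leq \alpha_{\max} < 1/M$, the factor $1-M\alpha_t/2$ is bounded below by $1/2$, so $f(x_{t+1}) \leq f(x_t) - (\alpha_{\min}/2)(\partial_{i_t} f(x_t))^2$. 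In particular $\{f(x_t)\}$ is non-increasing, so $x_t \in L(x_0)$ for every $t$, and $\{x_t\}$ is bounded. Continuity of $f$ on the bounded set $L(x_0)$ then gives a lower bound $f_* > -\infty$, and summing the descent inequality yields
\begin{equation*}
\sum_{t=0}^{\infty} \bigl(\partial_{i_t} f(x_t)\bigr)^2 \leq \frac{2}{\alpha_{\min}}\bigl(f(x_0)-f_*\bigr) < \infty \qquad \text{almost surely.}
\end{equation*}

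Next, fix $\omega$ in the full-probability set on which the above summability holds and on which each coordinate $j \in \{1,\dots,d\}$ is chosen infinitely often (the latter holds a.s.\ by Borel--Cantelli applied to the i.i.d.\ choices of $i_t$). Suppose $x_{t_k} \to x^*$ along some subsequence and fix any coordinate $j$. For each $k$, let $s_k$ be the smallest index $\geq t_k$ with $i_{s_k}=j$. Since $\bP(s_k - t_k > m \mid \mathcal{F}_{t_k}) = (1-1/d)^m$, choosing $m_k = \lceil C \log k\rceil$ with $C$ large and applying Borel--Cantelli gives $s_k - t_k \leq m_k$ for all large $k$, almost surely. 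The key step is then to show $x_{s_k} \to x^*$: by Cauchy--Schwarz,
\begin{equation*}
\norm{x_{s_k}-x_{t_k}} \leq \alpha_{\max}\sum_{\tau=t_k}^{s_k-1}\bigl\lvert \partial_{i_\tau} f(x_\tau)\bigr\rvert \leq \alpha_{\max}\sqrt{m_k}\,\biggl(\sum_{\tau=t_k}^{s_k-1}\bigl(\partial_{i_\tau}f(x_\tau)\bigr)^2\biggr)^{1/2},
\end{equation*}
which tends to $0$ because the tail of the a.s.-convergent series vanishes while $\sqrt{m_k} = O(\sqrt{\log k})$ grows only logarithmically. Hence $x_{s_k} \to x^*$.

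Finally, the summability $\sum_t (\partial_{i_t}f(x_t))^2 < \infty$ implies $\partial_{i_t}f(x_t) \to 0$, so in particular $\partial_j f(x_{s_k}) = \partial_{i_{s_k}} f(x_{s_k}) \to 0$. By continuity of $\nabla f$, $\partial_j f(x^*) = 0$. Since $j$ was arbitrary, $\nabla f(x^*)=0$, i.e.\ $x^* \in \crit$. The main obstacle I anticipate is the bookkeeping in the third step: one must choose the window length $m_k$ to grow slowly enough that Borel--Cantelli gives $s_k-t_k \leq m_k$ almost surely, yet fast enough to dominate a typical geometric waiting time; a logarithmic rate $m_k = C \log k$ handles both demands simultaneously. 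Everything else is routine smooth-descent analysis.
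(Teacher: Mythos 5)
Your first two steps (descent inequality, summability of $\sum_t(\partial_{i_t}f(x_t))^2$, boundedness of $\{x_t\}$ in $L(x_0)$) are correct and match the spirit of the paper's opening. The divergence is in the final step, and there you have a genuine gap.

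The claim that $\alpha_{\max}\sqrt{m_k}\bigl(\sum_{\tau=t_k}^{s_k-1}(\partial_{i_\tau}f(x_\tau))^2\bigr)^{1/2}\to 0$ is not justified: you assert the tail of the convergent series ``vanishes while $\sqrt{m_k}$ grows only logarithmically,'' but for a generic nonnegative summable sequence the tail $R_t=\sum_{\tau\ge t}a_\tau$ can decay arbitrarily slowly, e.g.\ $a_t=1/(t(\log t)^2)$ gives $R_t\sim 1/\log t$, so that $\sqrt{\log t}\,R_t^{1/2}$ does not tend to $0$. Since the descent inequality gives no control whatsoever on the decay rate of the tail of $\sum_t(\partial_{i_t}f(x_t))^2$, the logarithmic window is not slow enough, and nothing in the problem forces a faster decay. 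There is also a secondary issue: the Borel--Cantelli application to bound $s_k-t_k$ is being run along a subsequence $\{t_k\}$ that is itself $\omega$-dependent (extracted from a convergent subsequence of the $\omega$-dependent iterates), so the events $\{s_k-t_k>m_k\}$ are not of the form to which Borel--Cantelli straightforwardly applies; the fix (bound $s-t$ uniformly over all starting times $t$, not just the random $t_k$) forces $m_k\sim\log t_k$ rather than $\log k$, which only worsens the first problem.

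The paper sidesteps both difficulties by exploiting the monotone decrease of $f$ directly rather than the summability of the squared partials. Fixing $\eta>0$, it covers the \emph{deterministic} compact set $L(x_0,\eta)=\{x\in L(x_0):\norm{\nabla f(x)}\ge\eta\}$ by finitely many neighborhoods $U_{x^k}$, each chosen so small that a single step along a well-aligned coordinate $i_{x^k}$ drops $f$ strictly below $\inf_{U_{x^k}}f$. Because $f(x_t)$ is non-increasing, once such a step occurs the iterate can never return to $U_{x^k}$. Each visit to $U_{x^k}$ thus triggers this permanent escape with conditional probability at least $1/d$, so the number of visits is a.s.\ finite, and the finitely many $U_{x^k}$ together rule out accumulation points in $L(x_0,\eta)$. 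This replaces your tail-rate bookkeeping (which cannot be made to work) with a ``geometric escape'' argument over a fixed finite cover, and that is what makes the union bound and the Borel--Cantelli-type reasoning go through cleanly. If you want to salvage your route, you would need to replace the Cauchy--Schwarz tail bound with something closer to this escape mechanism.
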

	
	\begin{proof}
		Algorithm~\ref{alg:random CGD} is always monotone since the following holds for any $t\in\bN$ by Taylor's expansion:
		\begin{equation}\label{f: decay}
			\begin{split}
				f(x_{t+1})& =f\left(x_t-\alpha_t e_{i_t} e_{i_t}^{\top} \nabla f(x_t)\right)\\
				&=f(x_t)-\alpha_t \left(e_{i_t}^{\top} \nabla f(x_t)\right)^2 \\
				&\qquad\qquad+\frac{1}{2}\alpha_t^2\left(e_{i_t}^{\top} \nabla f(x_t)\right)^2 \cdot e_{i_t}^{\top} \nabla f\left(x_t-\theta_t\alpha_t e_{i_t} e_{i_t}^{\top} \nabla f(x_t)\right)e_{i_t}\\
				& \leq f(x_t)-\frac{1}{2}\alpha_t(e_i^{\top} \nabla f(x_t))^2\\
				& \leq  f(x_t),
			\end{split}
		\end{equation}
		where $\theta_t\in(0,1)$, which implies that the whole sequence $\{x_t\}_{t\in\bN}$ is contained in the bounded level set $L(x_0)$.

		Let us consider any $\eta>0$ and set
		\begin{equation*}
			L(x_0,\eta)=\{x\in L(x_0):\norm{\nabla f(x)}\geq \eta\},
		\end{equation*}
		which is either empty or compact. We claim that with probability $1$, the accumulation points of $\{x_t\}_{t\in\bN}$ will not be located in  $L(x_0,\eta)$. This is clear when $L(x_0,\eta)$ is empty, so it suffices to consider compact $L(x_0,\eta)$. Set $\mu \in (0, 1/\sqrt{d}]$ as a fixed constant.  For any $x\in L(x_0,\eta)$, there exists an open neighborhood $U_x$ of $x$ and a coordinate $i_x\in\{1,2,\dots,d\}$, such that
		\begin{equation}\label{def: Ux1}
			\bigl\lvert e_{i_x}^{\top} \nabla f(y)\bigr\rvert\geq \mu \norm{\nabla f(x)} \geq \mu \eta,\quad \forall\ y\in U_x,
		\end{equation}
		and that
		\begin{equation}\label{def: Ux2}
			\sup_{y\in U_x} f(y)-\inf_{y\in U_x} f(y)<\frac{\alpha_{\min}\mu^2\eta^2}{2}.
		\end{equation}
		Noticing that $L(x_0,\eta)\subset\bigcup_{x\in L(x_0,\eta)} U_x$, by the compactness, there exist finitely many points, say $x^1,x^2,\dots,x^K$, such that 
		\begin{equation*}
			L(x_0,\eta)\subset\bigcup_{1\leq k\leq K} U_{x^k}.
		\end{equation*}
		For any $k\in\{1,2,\dots,K\}$, combining \eqref{f: decay}, \eqref{def: Ux1}, and \eqref{def: Ux2}, we know that for any $t$, conditioned on $\mathcal{F}_{t-1}$ with $x_t\in U_{x^k}$, if $i_t=i_x$ (which has probability $1/d$), then $f(x_{t+1})<\inf_{y\in U_{x^k}} f(y)$ and thus $x_{t'}\not\in U_{x^k}$ for all $t' > t$. 
		
		Therefore, the probability that there are infinitely many $t\in\bN$ with $x_t\in U_{x^k}$ is zero, which implies that $\{x_t\}_{t\in\bN}$ does not have accumulation points in $U_{x^k}$ with probability $1$. We conclude that with probability $1$, $L(x_0,\eta)$ does not contain any accumulation points of $\{x_t\}_{t\in\bN}$ as $K$ is finite.  Since this holds for any $\eta> 0$, we have  for $\bP$-a.e.{} $\omega\in\Omega$, $\{x_t\}_{t\in\bN}$ has no accumulation points in any $\bigcup_{n\geq 1}L(x_0,1/n)$, which then leads to the desired result.
	\end{proof}
	
	Proposition~\ref{prop: converge-stationary} implies that any accumulation point of the algorithm iterate is a critical point. If we further assume that each critical point of $f$ is isolated, we would conclude that the whole sequence $\{x_t\}_{t\in\bN}$ converges and the limit is in $\crit$.
	
	\begin{proposition}\label{prop: converge-stationary2}
		Under the  assumptions of  Proposition~\ref{prop: converge-stationary}. If every $x^*\in\crit$ is an isolated critical point of $f$, then with probability $1$, $x_t$ converges to some critical point of $f$ as $t\rightarrow\infty$.
	\end{proposition}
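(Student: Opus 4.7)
The plan is to combine Proposition~\ref{prop: converge-stationary} with a summable descent estimate and the isolation hypothesis. I would work on the full-probability event on which every accumulation point of $\{x_t\}_{t\in\bN}$ belongs to $\crit$. The monotonicity established in the proof of Proposition~\ref{prop: converge-stationary} already ensures $x_t\in L(x_0)$ for all $t$, so the bounded sequence admits at least one accumulation point $a\in\crit$; it then suffices to prove that $x_t\to a$ on this event.

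The first step is to show $\norm{x_{t+1}-x_t}\to 0$. Telescoping the descent inequality \eqref{f: decay} and using continuity of $f$ on the compact set $L(x_0)$ gives
\begin{equation*}
\sum_{t=0}^{\infty}\alpha_t\bigl(e_{i_t}^{\top}\nabla f(x_t)\bigr)^2 \;\leq\; 2\Bigl(f(x_0)-\inf_{L(x_0)} f\Bigr)<\infty.
\end{equation*}
Since $\alpha_t\geq \alpha_{\min}>0$, the summand tends to zero, hence $\bigl\lvert e_{i_t}^{\top}\nabla f(x_t)\bigr\rvert\to 0$ and therefore $\norm{x_{t+1}-x_t}=\alpha_t\bigl\lvert e_{i_t}^{\top}\nabla f(x_t)\bigr\rvert\to 0$.

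The second and main step is a crossing argument exploiting the isolation of $a$. Pick $r_0>0$ such that $B_{r_0}(a)\cap \crit=\{a\}$. Suppose, for contradiction, that $x_t\not\to a$. Then there exists $r\in(0,r_0)$ with $x_t\notin B_r(a)$ for infinitely many $t$, while $x_t\in B_{r/2}(a)$ for infinitely many other $t$ since $a$ is an accumulation point. Once $t$ is large enough that $\norm{x_{t+1}-x_t}<r/2$, the iterate cannot leave $B_{r/2}(a)$ and land outside $B_r(a)$ in a single step, so each such transition forces $x_t$ into the compact annulus $K_r:=\{x:r/2\leq \norm{x-a}\leq r\}$. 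Consequently $x_t\in K_r$ for infinitely many $t$, and Bolzano-Weierstrass produces an accumulation point in $K_r$, which by Proposition~\ref{prop: converge-stationary} must lie in $\crit$, contradicting $K_r\cap \crit=\emptyset$. The main subtlety lies precisely in this crossing argument: one must combine the vanishing-increment bound with the geometric annulus around $a$ in the right way. No additional probabilistic input beyond Proposition~\ref{prop: converge-stationary} is needed, since both the descent-sum estimate and the topological argument are sample-wise deterministic on the chosen event.
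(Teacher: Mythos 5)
Your proof is correct and uses the same core mechanism as the paper: vanishing step increments together with a crossing argument that produces an accumulation point in a compact set free of critical points, contradicting Proposition~\ref{prop: converge-stationary}. The implementations differ in two small but pleasant ways. To get vanishing increments, the paper proves $\norm{\nabla f(x_t)}\to 0$ by arguing that a subsequence bounded away from zero would yield a non-critical accumulation point, whereas you obtain $\norm{x_{t+1}-x_t}=\alpha_t\lvert e_{i_t}^\top\nabla f(x_t)\rvert\to 0$ directly by telescoping \eqref{f: decay}; your route is slightly more elementary and does not invoke Proposition~\ref{prop: converge-stationary} for this step. For the crossing argument, the paper first shows $\crit\cap L(x_0)$ is finite, surrounds each of these critical points by pairwise-separated neighborhoods, and concludes that oscillation between two of them forces infinitely many iterates into the compact complement; you instead localize around a single accumulation point $a$ and use the annulus $\{x:r/2\leq\norm{x-a}\leq r\}$, which needs only the isolation of $a$ and bypasses the finiteness step entirely. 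Both versions are valid; yours is a bit more streamlined.
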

	
	\begin{proof}
		It follows from Proposition~\ref{prop: converge-stationary} that $\norm{\nabla f(x_t)}$ converges to $0$ as $t\rightarrow\infty$ for $a.e.$ $\omega\in\Omega$. In fact, if there were a subsequence $\{x_{t_k}\}_{k\in\bN}$ and $\epsilon>0$ with $\norm{\nabla f(x_{t_k})}\geq \epsilon,\ \forall\ k\in\bN$. Then by the boundedness of $L(x_0)$, $\{x_{t_k}\}_{k\in\bN}$ would have some accumulation point which is not a stationary point of $f$, which leads to a contradiction.

		Moreover, $\crit\cap L(x_0)$ is a finite set, since otherwise, $\crit\cap L (x_0)$ would have a limiting point which would be a non-isolated critical point of $f$, violating the assumption. 
		
		Consider a fixed $\omega\in\Omega$ with $\lim_{t\rightarrow\infty}\norm{\nabla f(x_t)}=0$. Select a open neighborhood $U_{x^*}$ for every $x^*\in\crit\cap L(x_0)$, such that there exists some $\delta>0$ with
		\begin{equation*}
			\text{dist}(U_{x^*},U_{y^*})=\inf_{x\in U_{x^*},y\in U_{y^*}}\norm{x-y}>\delta,\quad \forall\ x^*,y^*\in\crit.
		\end{equation*}
		If $\{x_t\}_{t\in\bN}$ has more than one accumulation point,  there would be infinitely many iterates located in $L(x_0)\backslash\bigcup_{x^*\in\crit\cap L(x_0)} U_{x^*}$ which is compact. Therefore, $\{x_t\}_{t\in\bN}$ would have an accumulation point in $L(x_0)\backslash\bigcup_{x^*\in\crit\cap L(x_0)} U_{x^*}$, which contradicts Proposition~\ref{prop: converge-stationary}.
	\end{proof}
	
	Corollary~\ref{cor: global-conv} is now an immediate consequence.
	
	\begin{proof}[Proof of Corollary~\ref{cor: global-conv}]
		The result follows directly from Theorem~\ref{thm: main} and Proposition~\ref{prop: converge-stationary2}.
	\end{proof}

	\bibliographystyle{amsxport}
	\bibliography{references}

@book{bhatia,
  title={Matrix analysis},
  author={Bhatia, Rajendra},
  volume={169},
  year={2013},
  publisher={Springer Science \& Business Media}
}

@book{Arnold_RDS,
series = {Springer monographs in mathematics},
publisher = {Springer},
isbn = {3540637583},
year = {1998},
title = {Random dynamical systems},
address = {New York},
author = {Arnold, Ludwig},
keywords = {Stochastic differential equations},
lccn = {^^^98027207^},
}

@article{Lee-2019,
issn = {1436-4646},
abstract = {We establish that first-order methods avoid strict saddle points for almost all initializations. Our results apply to a wide variety of first-order methods, including (manifold) gradient descent, block coordinate descent, mirror descent and variants thereof. The connecting thread is that such algorithms can be studied from a dynamical systems perspective in which appropriate instantiations of the Stable Manifold Theorem allow for a global stability analysis. Thus, neither access to second-order derivative information nor randomness beyond initialization is necessary to provably avoid strict saddle points.},
journal = {Mathematical programming},
pages = {311--337},
volume = {176},
publisher = {Springer Science and Business Media LLC},
number = {1-2},
year = {2019},
title = {First-order methods almost always avoid strict saddle points},
copyright = {Springer-Verlag GmbH Germany, part of Springer Nature and Mathematical Optimization Society 2019},
address = {Berlin/Heidelberg},
author = {Lee, Jason D and Panageas, Ioannis and Piliouras, Georgios and Simchowitz, Max and Jordan, Michael I and Recht, Benjamin},
keywords = {Theoretical, Mathematical and Computational Physics ; Saddle points ; Mathematics ; 90C26 ; Dynamical systems ; Local minimum ; Gradient descent ; Mathematical Methods in Physics ; Calculus of Variations and Optimal Control; Optimization ; Mathematics of Computing ; Numerical Analysis ; Combinatorics ; Smooth optimization ; Computer science ; Electrical engineering ; Algorithms ; Analysis ; Methods},
}

@article{Ruelle-1979,
issn = {1618-1913},
journal = {Publications mathématiques. Institut des hautes études scientifiques},
pages = {27--58},
volume = {50},
publisher = {Springer Science and Business Media LLC},
number = {1},
year = {1979},
title = {Ergodic theory of differentiable dynamical systems},
author = {Ruelle, David},
}

@article{Weigu-Sternberg,
issn = {0010-3640},
abstract = {In this paper, we prove the smooth conjugacy theorems of Sternberg type for random dynamical systems based on their Lyapunov exponents. We also present a stable and unstable manifold theorem with tempered estimates that are used to construct conjugacy. © 2005 Wiley Periodicals, Inc.},
journal = {Communications on Pure and Applied Mathematics},
pages = {941--988},
volume = {58},
publisher = {Wiley Subscription Services, Inc., A Wiley Company},
number = {7},
year = {2005},
title = {Sternberg theorems for random dynamical systems},
copyright = {Copyright © 2005 Wiley Periodicals, Inc.},
address = {Hoboken},
author = {Li, Weigu and Lu, Kening},
keywords = {Randomized algorithms ; Dynamical systems},
}

@article{Ledrappier-91,
	title={Stability of Lyapunov exponents},
	author={Ledrappier, F. and Young, Lai-Sang},
	journal={Ergodic Theory and Dynamical Systems},
	volume={11},
	number={3},
	pages={469--484},
	year={1991},
	publisher={Cambridge University Press}
}

@article{oseledets,
  title={A multiplicative ergodic theorem. Liapunov characteristic numbers for dynamical systems},
  author={Oseledets, Valery Iustinovich},
  journal={Trans. Moscow Math. Soc.},
  volume={19},
  pages={197--231},
  year={1968}
}

@article{raghunathan,
  title={A proof of Oseledec’s multiplicative ergodic theorem},
  author={Raghunathan, Madabusi S},
  journal={Israel Journal of Mathematics},
  volume={32},
  number={4},
  pages={356--362},
  year={1979},
  publisher={Springer}
}

@article{Ruelle-82,
  title={Characteristic exponents and invariant manifolds in Hilbert space},
  author={Ruelle, David},
  journal={Annals of Mathematics},
  pages={243--290},
  year={1982},
  publisher={JSTOR}
}

@article{Boxler-89-center,
  title={A stochastic version of center manifold theory},
  author={Boxler, Petra},
  journal={Probability Theory and Related Fields},
  volume={83},
  number={4},
  pages={509--545},
  year={1989},
  publisher={Springer}
}

@incollection{Wanner-95,
  title={Linearization of random dynamical systems},
  author={Wanner, Thomas},
  booktitle={Dynamics reported},
  pages={203--268},
  year={1995},
  publisher={Springer}
}

@article{Froyland-15,
  title={Stochastic Stability of Lyapunov Exponents and Oseledets Splittings for Semi-invertible Matrix Cocycles},
  author={Froyland, Gary and Gonz{\'a}lez-Tokman, Cecilia and Quas, Anthony},
  journal={Communications on Pure and Applied Mathematics},
  volume={68},
  number={11},
  pages={2052--2081},
  year={2015},
  publisher={Wiley Online Library}
}

@inproceedings{Ge-15,
  title={Escaping from saddle points—online stochastic gradient for tensor decomposition},
  author={Ge, Rong and Huang, Furong and Jin, Chi and Yuan, Yang},
  booktitle={{Conference on Learning Theory}},
  pages={797--842},
  year={2015}
}

@article{Levy-16,
  title={The power of normalization: Faster evasion of saddle points},
  author={Levy, Kfir Y},
  journal={arXiv preprint arXiv:1611.04831},
  year={2016}
}

@inproceedings{Jin-17,
  title={How to Escape Saddle Points Efficiently},
  author={Jin, Chi and Ge, Rong and Netrapalli, Praneeth and Kakade, Sham M and Jordan, Michael I},
  booktitle={{International Conference on Machine Learning}},
  pages={1724--1732},
  year={2017}
}

@inproceedings{Jin-18,
  title={Accelerated gradient descent escapes saddle points faster than gradient descent},
  author={Jin, Chi and Netrapalli, Praneeth and Jordan, Michael I},
  booktitle={{Conference On Learning Theory}},
  pages={1042--1085},
  year={2018},
  organization={PMLR}
}

@article{Jin-19,
  title={On nonconvex optimization for machine learning: Gradients, stochasticity, and saddle points},
  author={Jin, Chi and Netrapalli, Praneeth and Ge, Rong and Kakade, Sham M and Jordan, Michael I},
  journal={Journal of the ACM (JACM)},
  volume={68},
  number={2},
  pages={1--29},
  year={2021},
  publisher={ACM New York, NY, USA}
}

@article{Guo-20,
  title={Perturbed gradient descent with occupation time},
  author={Guo, Xin and Han, Jiequn and Tang, Wenpin},
  journal={arXiv preprint arXiv:2005.04507},
  year={2020}
}

@inproceedings{Lee-16,
  title={Gradient descent only converges to minimizers},
  author={Lee, Jason D and Simchowitz, Max and Jordan, Michael I and Recht, Benjamin},
  booktitle={{Conference on Learning Theory}},
  pages={1246--1257},
  year={2016}
}

@book{Shub-87,
  title={Global stability of dynamical systems},
  author={Shub, Michael},
  year={1987},
  publisher={Springer Science \& Business Media}
}

@article{Li-13,
  title={Normally hyperbolic invariant manifolds for random dynamical systems: Part I-persistence},
  author={Li, Ji and Lu, Kening and Bates, Peter},
  journal={Transactions of the American Mathematical Society},
  pages={5933--5966},
  year={2013},
  publisher={JSTOR}
}

@book{Lian-10,
  title={Lyapunov exponents and invariant manifolds for random dynamical systems in a Banach space},
  author={Lian, Zeng and Lu, Kening},
  year={2010},
  publisher={American Mathematical Soc.}
}

@article{Guo-16,
  title={Smooth center manifolds for random dynamical systems},
  author={Guo, Peng and Shen, Jun},
  journal={Rocky Mountain Journal of Mathematics},
  volume={46},
  number={6},
  pages={1925--1962},
  year={2016},
  publisher={Rocky Mountain Mathematics Consortium}
}

@article{Weigu-05,
  title={Poincar{\'e} theorems for random dynamical systems},
  author={Li, Weigu and Lu, Kening},
  journal={Ergodic Theory and Dynamical Systems},
  volume={25},
  number={4},
  pages={1221},
  year={2005},
  publisher={Cambridge University Press}
}

@article{Weigu-08,
  title={A Siegel theorem for dynamical systems under random perturbations},
  author={Li, Weigu and Lu, Kening},
  journal={Discrete \& Continuous Dynamical Systems-B},
  volume={9},
  number={3\&4, May},
  pages={635},
  year={2008},
  publisher={American Institute of Mathematical Sciences}
}

@article{Weigu-16,
  title={Takens theorem for random dynamical systems},
  author={Li, Weigu and Lu, Kening},
  journal={Discrete \& Continuous Dynamical Systems-B},
  volume={21},
  number={9},
  pages={3191},
  year={2016},
  publisher={American Institute of Mathematical Sciences}
}

@article{Wright-15,
  title={Coordinate descent algorithms},
  author={Wright, Stephen J},
  journal={Mathematical Programming},
  volume={151},
  number={1},
  pages={3--34},
  year={2015},
  publisher={Springer}
}

@article{Nesterov-12,
  title={Efficiency of coordinate descent methods on huge-scale optimization problems},
  author={Nesterov, Yu},
  journal={SIAM Journal on Optimization},
  volume={22},
  number={2},
  pages={341--362},
  year={2012},
  publisher={SIAM}
}

@article{Liu-15,
  title={Asynchronous stochastic coordinate descent: Parallelism and convergence properties},
  author={Liu, Ji and Wright, Stephen J},
  journal={SIAM Journal on Optimization},
  volume={25},
  number={1},
  pages={351--376},
  year={2015},
  publisher={SIAM}
}

@article{leadeigen-19,
  title={Coordinatewise descent methods for leading eigenvalue problem},
  author={Li, Yingzhou and Lu, Jianfeng and Wang, Zhe},
  journal={SIAM Journal on Scientific Computing},
  volume={41},
  number={4},
  pages={A2681--A2716},
  year={2019},
  publisher={SIAM}
}

@article{shi2016primer,
  title={A primer on coordinate descent algorithms},
  author={Shi, Hao-Jun Michael and Tu, Shenyinying and Xu, Yangyang and Yin, Wotao},
  journal={arXiv preprint arXiv:1610.00040},
  year={2016}
}

@article{Richtarik-16,
  title={Parallel coordinate descent methods for big data optimization},
  author={Richt{\'a}rik, Peter and Tak{\'a}{\v{c}}, Martin},
  journal={Mathematical Programming},
  volume={156},
  number={1-2},
  pages={433--484},
  year={2016},
  publisher={Springer}
}

@article{Beck-13,
  title={On the convergence of block coordinate descent type methods},
  author={Beck, Amir and Tetruashvili, Luba},
  journal={SIAM journal on Optimization},
  volume={23},
  number={4},
  pages={2037--2060},
  year={2013},
  publisher={SIAM}
}

@article{Saha-13,
  title={On the nonasymptotic convergence of cyclic coordinate descent methods},
  author={Saha, Ankan and Tewari, Ambuj},
  journal={SIAM Journal on Optimization},
  volume={23},
  number={1},
  pages={576--601},
  year={2013},
  publisher={SIAM}
}

@article{Richtarik-14,
  title={Iteration complexity of randomized block-coordinate descent methods for minimizing a composite function},
  author={Richt{\'a}rik, Peter and Tak{\'a}{\v{c}}, Martin},
  journal={Mathematical Programming},
  volume={144},
  number={1-2},
  pages={1--38},
  year={2014},
  publisher={Springer}
}

@inproceedings{Liu-14,
  title={An asynchronous parallel stochastic coordinate descent algorithm},
  author={Liu, Ji and Wright, Stephen J and R{\'e}, Christopher and Bittorf, Victor and Sridhar, Srikrishna},
  booktitle={{International Conference on Machine Learning}},
  pages={469--477},
  year={2014}
}

@article{Lee-19,
  title={Random permutations fix a worst case for cyclic coordinate descent},
  author={Lee, Ching-Pei and Wright, Stephen J},
  journal={IMA Journal of Numerical Analysis},
  volume={39},
  number={3},
  pages={1246--1275},
  year={2019},
  publisher={Oxford University Press}
}

@article{Oswald-17,
  title={Random reordering in SOR-type methods},
  author={Oswald, Peter and Zhou, Weiqi},
  journal={Numerische Mathematik},
  volume={135},
  number={4},
  pages={1207--1220},
  year={2017},
  publisher={Springer}
}

@article{Gurbuzbalaban-20,
  title={Randomness and permutations in coordinate descent methods},
  author={G{\"u}rb{\"u}zbalaban, Mert and Ozdaglar, Asuman and Vanli, Nuri Denizcan and Wright, Stephen J},
  journal={Mathematical Programming},
  volume={181},
  number={2},
  pages={349--376},
  year={2020},
  publisher={Springer}
}

@article{Sun-15,
  title={Improved iteration complexity bounds of cyclic block coordinate descent for convex problems},
  author={Sun, Ruoyu and Hong, Mingyi},
  journal={Advances in Neural Information Processing Systems},
  volume={28},
  pages={1306--1314},
  year={2015}
}

@article{Sun-19,
  title={Worst-case complexity of cyclic coordinate descent: $O(n^2)$ gap with randomized version},
  author={Sun, Ruoyu and Ye, Yinyu},
  journal={Mathematical Programming},
  pages={1--34},
  year={2019},
  publisher={Springer}
}

@article{Wright-20,
  title={Analyzing random permutations for cyclic coordinate descent},
  author={Wright, Stephen J and Lee, Ching-pei},
  journal={Mathematics of Computation},
  year={2020}
}

@article{ONeill-19,
  title={Behavior of accelerated gradient methods near critical points of nonconvex functions},
  author={O’Neill, Michael and Wright, Stephen J},
  journal={Mathematical Programming},
  volume={176},
  number={1-2},
  pages={403--427},
  year={2019},
  publisher={Springer}
}

@article{Tseng-09,
  title={A coordinate gradient descent method for nonsmooth separable minimization},
  author={Tseng, Paul and Yun, Sangwoon},
  journal={Mathematical Programming},
  volume={117},
  number={1-2},
  pages={387--423},
  year={2009},
  publisher={Springer}
}

@article{Wright-12,
  title={Accelerated block-coordinate relaxation for regularized optimization},
  author={Wright, Stephen J},
  journal={SIAM Journal on Optimization},
  volume={22},
  number={1},
  pages={159--186},
  year={2012},
  publisher={SIAM}
}

@inproceedings{Du-17,
  title={Gradient descent can take exponential time to escape saddle points},
  author={Du, Simon S and Jin, Chi and Lee, Jason D and Jordan, Michael I and Singh, Aarti and Poczos, Barnabas},
  booktitle={Advances in neural information processing systems},
  pages={1067--1077},
  year={2017}
}

@article{Walters-93,
  title={A dynamical proof of the multiplicative ergodic theorem},
  author={Walters, Peter},
  journal={Transactions of the American Mathematical Society},
  volume={335},
  number={1},
  pages={245--257},
  year={1993}
}

@book{Liu-06,
  title={Smooth ergodic theory of random dynamical systems},
  author={Liu, Pei-Dong and Qian, Min},
  year={2006},
  publisher={Springer}
}

@article{kawaguchi2016deep,
  title={Deep learning without poor local minima},
  author={Kawaguchi, Kenji},
  journal={Advances in neural information processing systems},
  volume={29},
  year={2016}
}

@article{lu2017depth,
  title={Depth creates no bad local minima},
  author={Lu, Haihao and Kawaguchi, Kenji},
  journal={arXiv preprint arXiv:1702.08580},
  year={2017}
}

@article{sun2018geometric,
  title={A geometric analysis of phase retrieval},
  author={Sun, Ju and Qu, Qing and Wright, John},
  journal={Foundations of Computational Mathematics},
  volume={18},
  number={5},
  pages={1131--1198},
  year={2018},
  publisher={Springer}
}

@inproceedings{ge2017no,
  title={No spurious local minima in nonconvex low rank problems: A unified geometric analysis},
  author={Ge, Rong and Jin, Chi and Zheng, Yi},
  booktitle={International Conference on Machine Learning},
  pages={1233--1242},
  year={2017},
  organization={PMLR}
}

@article{attouch2013convergence,
  title={Convergence of descent methods for semi-algebraic and tame problems: proximal algorithms, forward--backward splitting, and regularized Gauss--Seidel methods},
  author={Attouch, Hedy and Bolte, J{\'e}r{\^o}me and Svaiter, Benar Fux},
  journal={Mathematical Programming},
  volume={137},
  number={1},
  pages={91--129},
  year={2013},
  publisher={Springer}
}

@article{attouch2010proximal,
  title={Proximal alternating minimization and projection methods for nonconvex problems: An approach based on the Kurdyka-{\L}ojasiewicz inequality},
  author={Attouch, H{\'e}dy and Bolte, J{\'e}r{\^o}me and Redont, Patrick and Soubeyran, Antoine},
  journal={Mathematics of operations research},
  volume={35},
  number={2},
  pages={438--457},
  year={2010},
  publisher={INFORMS}
}

@article{bolte2014proximal,
  title={Proximal alternating linearized minimization for nonconvex and nonsmooth problems},
  author={Bolte, J{\'e}r{\^o}me and Sabach, Shoham and Teboulle, Marc},
  journal={Mathematical Programming},
  volume={146},
  number={1},
  pages={459--494},
  year={2014},
  publisher={Springer}
}

@article{boct2020inertial,
  title={Inertial Proximal Block Coordinate Method for a Class of Nonsmooth and Nonconvex Sum-of-Ratios Optimization Problems},
  author={Bo{\c{t}}, Radu Ioan and Dao, Minh N and Li, Guoyin},
  journal={arXiv preprint arXiv:2011.09782},
  year={2020}
}
	
	\appendix
	
	\section{Validity of Assumption~\ref{Assump: non-zero-proj}}\label{sec: ex-asp3}
	
	In this appendix, we provide some justification of Assumption~\ref{Assump: non-zero-proj}, which is expected to hold generically. In particular, the following proposition validates this assumption when the off-diagonal entries of $H$ are all non-zero. 
	
	\begin{proposition}\label{prop-ex-asp3} Suppose that the largest Lyapunov exponent of $\Phi^H(t,\omega)$ is positive. Then Assumption~\ref{Assump: non-zero-proj} holds as long as $1<\alpha_{\min}<\alpha_{\max}<1/\max_{1\leq i\leq d}|H_{ii}|$ and every off-diagonal entry of $H$ is non-zero.
	\end{proposition}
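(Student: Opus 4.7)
My plan is to show that for each fixed basis index $k\in\{1,\dots,d\}$ the event
\[
B_k := \bigl\{\omega : e_k \in W_-^H(\omega)\bigr\}
\]
(equivalently, $\mathcal{P}_+^H(\omega)e_k=0$) has probability zero, by combining four ingredients: the $A^H$-invariance $W_-^H(\tau\omega) = A^H(\omega)\,W_-^H(\omega)$; the product-space independence of $\pi_0(\omega)=(i,\alpha)$ from $\tau\omega$; the explicit one-step action $A^H(\omega)e_k = e_k - \alpha H_{ik}e_i$ for $i\neq k$ (and $(1-\alpha H_{kk})e_k$ for $i=k$); and the fact that $\lambda_+>0$ forces $W_-^H(\omega)$ to be a proper subspace of $\bR^d$ almost surely. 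The hypothesis that every off-diagonal entry $H_{ik}$ ($i\neq k$) is nonzero would enter decisively twice.

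First, I would show that $B_k$ is $\tau$-invariant modulo null sets and then invoke ergodicity. Rewrite $\omega\in B_k$ as $A^H(\omega)e_k\in W_-^H(\tau\omega)$ and condition on $\tau\omega$. If $\tau\omega\notin B_k$, so $e_k\notin W_-^H(\tau\omega)$, then for $i=k$ the condition $(1-\alpha H_{kk})e_k\in W_-^H(\tau\omega)$ fails for every $\alpha$, while for $i\neq k$ the affine line $\alpha\mapsto e_k-\alpha H_{ik}e_i$ is not contained in the proper subspace $W_-^H(\tau\omega)$ (containment would require both $e_k$ and $e_i$ in $W_-^H(\tau\omega)$, but $e_k\notin W_-^H(\tau\omega)$), so the line meets $W_-^H(\tau\omega)$ in at most one point, a Lebesgue-null set of $\alpha$'s under the uniform law. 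Hence $\bP(B_k\mid\tau\omega)=0$ whenever $\tau\omega\notin B_k$; by Fubini, $\bP(B_k\cap \tau^{-1}B_k^c)=0$, which together with the $\bP$-preservation of $\tau$ upgrades to $B_k = \tau^{-1}B_k$ up to a null set. Ergodicity of $\tau$ on $(\Omega,\mathcal{F},\bP)$ (Kolmogorov $0$-$1$) then gives $\bP(B_k)\in\{0,1\}$.

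The second step rules out $\bP(B_k)=1$. Assuming $e_k\in W_-^H(\omega)$ almost surely, we also have $e_k\in W_-^H(\tau\omega)$ a.s. For $\bP$-a.e.\ $\omega$ with $\pi_0(\omega)=(i,\alpha)$, $i\neq k$, invariance gives $e_k-\alpha H_{ik}e_i\in W_-^H(\tau\omega)$; subtracting the known $e_k\in W_-^H(\tau\omega)$ and using $\alpha H_{ik}\neq 0$ yields $e_i\in W_-^H(\tau\omega)$. Ranging over all $i\neq k$, we conclude that $W_-^H(\tau\omega)$ contains the entire standard basis almost surely, hence $W_-^H(\tau\omega)=\bR^d$ a.s., contradicting $\lambda_+>0$. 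The main technical obstacle I anticipate is the Fubini bookkeeping of the first step: identifying $(\Omega,\mathcal{F},\bP)$ cleanly with $\Omega_0\times\prod_{t\geq 1}\Omega_t$, verifying joint measurability of $B_k$ in the two factors, and checking that the union of the exceptional null sets across the countably many index pairs $(i,k)$ remains null. Once this bookkeeping is in place, the off-diagonal nonvanishing $H_{ik}\neq 0$ does its job through a direct one-line calculation at each occurrence.
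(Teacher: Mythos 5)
Your proof is correct, and it takes a genuinely different route from the paper's. The paper's argument waits for the finite random prefix length $\ell(\omega)$ at which all coordinates $\{1,\dots,d\}$ have appeared, expresses a basis matrix $V$ of $W_-^H(\omega)$ by applying the inverses $(I-\alpha_t e_{i_t}e_{i_t}^{\top}H)^{-1}$ to a basis of $W_-^H(\tau^\ell\omega)$, observes that $e_i\in W_-^H(\omega)$ forces the $(d-1)\times m$ submatrix $V_{\hat{\imath},:}$ to be column-rank deficient, and then shows through a rank-tracking lemma (Lemma~\ref{lem: full-rank}) that this rank deficiency occurs only on a Lebesgue-null set of stepsize tuples $(\alpha_0,\dots,\alpha_{\ell-1})$. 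Your argument instead converts the claim into a zero-one statement: you show $B_k=\{\omega:\mathcal{P}_+^H(\omega)e_k=0\}$ is $\tau$-invariant modulo $\bP$-null sets, using the one-step invariance $W_-^H(\tau\omega)=A^H(\omega)W_-^H(\omega)$ together with the independence of $\pi_0(\omega)$ and $\tau\omega$ and the fact that an affine line not contained in a proper subspace meets it in at most one point; ergodicity of the shift (which the paper establishes via Kolmogorov's $0$--$1$ law) then gives $\bP(B_k)\in\{0,1\}$, and $\bP(B_k)=1$ is ruled out because, again using the nonvanishing off-diagonal entries, it would force $W_-^H(\omega)=\bR^d$ a.s., contradicting $\lambda_+>0$. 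The ergodic route is shorter and more conceptual, exploiting structure (measure preservation, ergodicity) the paper already builds, while the paper's route is more elementary linear algebra and does not invoke ergodicity. One small point to tighten in your write-up: in the second step, ``ranging over all $i\neq k$'' only establishes $e_i\in W_-^H(\tau\omega)$ on the event $\{i_0=i\}$ for each fixed $i$, not for a.e.\ $\omega$; to conclude $\bP(e_i\in W_-^H(\omega))=1$ for every $i$ you should either use the independence of $i_0$ and $\tau\omega$ (so that $\bP(\{i_0=i\}\cap\tau^{-1}B_i)=\tfrac1d\bP(B_i)=\tfrac1d$ forces $\bP(B_i)=1$), or simply invoke the already-proved dichotomy $\bP(B_i)\in\{0,1\}$ together with $\bP(B_i)\geq 1/d>0$.
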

	
	\begin{proof}
		For any element $\omega$ in $\Omega$, we take the smallest $\ell$ such that   $\{1,2,\dots,d\}=\{i_0,i_1,\dots,i_{\ell-1}\}$ and write 
		\begin{equation*}
			\omega=((i_0,\alpha_0),\dots,(i_{\ell-1},\alpha_{\ell-1}),\omega'),
		\end{equation*}
		where $\omega' = \tau^\ell\omega \in \Omega$. We have that $\ell$ is finite for a.e. $\omega\in\Omega$. Note that we can view $\ell-1$ as a stopping time, in particular, given $\ell$, $\omega'$ has distribution $\bP$ and is independent with $\mathcal{F}_{\ell-1}$.
		
		Let $\{v_1',v_2',\dots,v_m'\}$ be a set of basis vectors for $W^H_-(\omega')=W^H_-(\tau^\ell\omega)$. Then a set of basis vectors for $W^H_-(\omega)$ is given by
		\begin{equation*}\label{basis-vj}
			v_j=\bigl(I-\alpha_0 e_{i_0}e_{i_0}^{\top} H\bigr)^{-1}\cdots\bigl(I-\alpha_{\ell-1} e_{i_{\ell-1}}e_{i_{\ell-1}}^{\top} H\bigr)^{-1}v_j',\quad j=1,2,\dots,m.
		\end{equation*}
		Denote the matrices concatenated by the column vectors as $V' = \bigl( v_1'|v_2'|\cdots|v_m'\bigr)$ and $V=\bigl( v_1|v_2|\cdots|v_m\bigr)$. If $e_i\in W^H_-(\omega)=\text{span}\{v_1,v_2,\dots,v_m\}$, then  $V_{\hat{\imath},:}$ is column-rank deficient since the existence of a positive Lyapunov exponent implies that $m\leq d-1$. Here and for the rest of the appendix, we denote by $V_{\hat{\imath},:}$ the $(d-1)\times m$ matrix obtained via removing $i$-th row of $V\in\bR^{d\times m}$.
		
		Therefore, as Assumption~\ref{Assump: non-zero-proj} is equivalent to that $e_i\notin W^H_-(\omega)$ holds for any $i\in\{1,2,\dots,d\}$ and almost every $\omega\in\Omega$, it suffices to show that $V_{\hat{\imath},:}$ has full column-rank with probability $1$. The key point is that given $\ell$,  $\alpha_0,\alpha_1,\dots,\alpha_{\ell-1}$ are independent with  $i_0,i_1,\dots,i_{\ell-1}$ and $\omega'=\tau^\ell\omega$. Thus, it suffices to show that with fixed $\ell$, $i_0,i_1,\dots,i_{\ell-1}$, $\omega'=\tau^\ell\omega$, and $v_1',v_2',\dots,v_m'$, the set of all $\alpha_0,\alpha_1,\dots,\alpha_{\ell-1}$ that yield the rank-deficiency of $V_{\hat{\imath},:}$ is of measure zero; and without loss of generality, we can assume $i = 1$. 
		Noticing that  $i_0,i_1,\cdots,i_{\ell-1}$ cover all the coordinates and that every off-diagonal entry of $H$ is non-zero, the desired result follows directly from the following Lemma~\ref{lem: full-rank} applied repeatedly.
	\end{proof}
	
	\begin{lemma}\label{lem: full-rank}
		Suppose that $X=\bigl( X_1|X_2|\cdots|X_d\bigr)^{\top}$ and $Y=\bigl(Y_1|Y_2|\cdots|Y_d\bigr)^{\top}$ are full-column-rank $d\times m$ matrices satisfying $Y=(I-\alpha e_k e_k^{\top} H)^{-1}X$ (we suppress in the notation the dependence of $Y$ on $k$ and $\alpha$ for simplicity). 
		Then the followings holds:
		\begin{itemize}
			\item[(i)] If $X_{\hat{1},:}$ has full column-rank, then for any $k=\{1,2,\dots,d\}$, $Y_{\hat{1},:}$  also has full column-rank for a.e. $\alpha$.
			\item[(ii)] Suppose that $X_{\hat{1},:}$ is column-rank deficient, and let $2\leq j_1<j_2<\dots<j_{m-1}\leq d$ be row indices such that 
			\begin{equation*}
				X_j\in\emph{span}\{X_{j_1},X_{j_2},\dots,X_{j_{m-1}}\},\quad\forall\ j\in\{2,3,\dots,d\}.
			\end{equation*}
			If $k\in\{1,j_1,j_2,\dots,j_{m-1}\}$, then we have with probability $1$, either $Y_{\hat{1},:}$ has full column-rank or $Y_{\hat{1},:}$ is column-rank deficient with
			\begin{equation*}
				Y_j\in\emph{span}\{Y_{j_1},Y_{j_2},\dots,Y_{j_{m-1}}\},\quad\forall\ j\in\{2,3,\dots,d\}.
			\end{equation*}
			If $k\notin\{1,j_1,j_2,\dots,j_{m-1}\}$ and $H_{k1}\neq 0$, then $Y_{\hat{1},:}$ has full column-rank.
		\end{itemize}
	\end{lemma}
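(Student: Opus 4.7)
The plan is to make the structure of $Y$ completely explicit via the Sherman--Morrison formula \eqref{inverse-AH}. Writing $\beta := \alpha / (1 - \alpha H_{kk})$, one obtains $Y_j = X_j$ for every $j \neq k$, while
\[
Y_k = (1 + \beta H_{kk}) X_k + \beta H_{k1} X_1 + \beta \sum_{\ell \neq 1, k} H_{k\ell} X_\ell.
\]
Thus $Y$ differs from $X$ only in row $k$, and the perturbation is a prescribed linear combination of all rows of $X$. In part~(ii), the hypothesis that $X$ has full column rank $m$ while $X_{\hat{1},:}$ has rank at most $m-1$ gives $X_1 \notin \mathrm{span}\{X_2, \ldots, X_d\}$, so there exists a linear functional $f\colon\bR^m\to\bR$ that vanishes on $\mathrm{span}\{X_2,\ldots,X_d\}$ with $f(X_1)\neq 0$. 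This functional will be the main tool for detecting whether the $X_1$-component introduced into $Y_k$ can be absorbed into the unchanged rows.

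For part~(i), the case $k = 1$ is trivial since $Y_{\hat{1},:} = X_{\hat{1},:}$. When $k \neq 1$, I view $Y_{\hat{1},:}$ as the rank-one perturbation $X_{\hat{1},:} + \beta\, e_{k'}\, v^\top$ of $X_{\hat{1},:}$, where $e_{k'}$ is the standard basis vector of $\bR^{d-1}$ indexing the position of $k$ in $X_{\hat{1},:}$ and $v^\top = e_k^\top H X$. The Gram determinant $\det(Y_{\hat{1},:}^\top Y_{\hat{1},:})$ is a polynomial in $\beta$ whose value at $\beta = 0$ is $\det(X_{\hat{1},:}^\top X_{\hat{1},:})\neq 0$; hence this polynomial is not identically zero and vanishes at finitely many $\beta$. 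Since $\beta$ is a non-degenerate rational function of $\alpha$ on $[\alpha_{\min},\alpha_{\max}]$, $Y_{\hat{1},:}$ has full column rank for Lebesgue-a.e.\ $\alpha$.

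For part~(ii), the sub-case $k = 1$ is again trivial. For $k \notin \{1, j_1, \ldots, j_{m-1}\}$ with $H_{k1}\neq 0$, I argue that $Y_k$ cannot lie in $\mathrm{span}\{X_j : j \neq 1, k\}$: applying $f$ to any putative relation yields $\beta H_{k1} f(X_1)$ on one side and $0$ on the other, which is a contradiction whenever $\beta\neq 0$ (true throughout our parameter range). Since $X_k \in \mathrm{span}\{X_{j_1},\ldots,X_{j_{m-1}}\}$, the unchanged rows $\{X_j:j\neq 1,k\}$ already span the $(m-1)$-dimensional row space of $X_{\hat{1},:}$; adjoining $Y_k$ then produces an $m$-dimensional span and gives full column rank.

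The main case is $k = j_{i_0}$ for some $i_0$. Assuming first $H_{k1}\neq 0$, I apply $f$ to any candidate relation $Y_j = \sum_i a_i Y_{j_i}$ with $j \notin \{1,j_1,\ldots,j_{m-1}\}$; this forces $a_{i_0}=0$, and then $Y_j = X_j = \sum_{i\neq i_0} a_i X_{j_i}$ forces the coefficient $c_{i_0}$ in the unique expansion $X_j = \sum_i c_i X_{j_i}$ to vanish. Consequently, either $c_{i_0}=0$ for every such $j$---in which case a direct check shows $Y_j\in\mathrm{span}\{Y_{j_i}\}$ for every $j \neq 1$, yielding the ``structured'' rank-deficient alternative---or some $c_{i_0}\neq 0$ and the same computation shows $Y_j\notin\mathrm{span}\{Y_{j_i}\}$, giving full column rank. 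If instead $H_{k1}=0$, then $Y_k\in\mathrm{span}\{X_2,\ldots,X_d\}$; writing $Y_k = \sum_i e_i X_{j_i}$ in the basis $\{X_{j_i}\}$, a direct expansion shows $e_{i_0}$ is an affine function of $\beta$ with $e_{i_0}\big\vert_{\beta=0} = 1$, hence $e_{i_0}\neq 0$ for a.e.\ $\alpha$, so that $\mathrm{span}\{Y_{j_i}\} = \mathrm{span}\{X_{j_i}\}$ and the claimed structure is preserved. The main obstacle I anticipate is the bookkeeping across these sub-cases; the underlying mechanism---isolating the $X_1$-contribution either via $f$ or via the affine dependence on $\beta$---is uniform.
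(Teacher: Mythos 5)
Your proof is correct, and it rests on the same Sherman--Morrison decomposition as the paper: only row $k$ of $X$ changes, and the perturbation injects a $\beta H_{k1}$ multiple of $X_1$ plus combinations of the other rows. Your formalization of the key idea through a linear functional $f$ that annihilates $\mathrm{span}\{X_2,\ldots,X_d\}$ but not $X_1$ is a clean device, and your Gram-determinant argument for part~(i) is a slightly more explicit version of the paper's terse statement. Where you diverge is in part~(ii) for the sub-case $k\in\{j_1,\ldots,j_{m-1}\}$: you split on $H_{k1}\neq 0$ versus $H_{k1}=0$ and run a further dichotomy on whether the coefficient $c_{i_0}(j)$ vanishes for all $j\notin\{1,j_1,\ldots,j_{m-1}\}$, which requires a fair amount of bookkeeping (and you leave the linear independence of $Y_{j_1},\ldots,Y_{j_{m-1}}$ implicit in Case~B). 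The paper avoids all of this by a shorter observation: since $Y$ has full column rank $m$, one always has $\mathrm{rank}(Y_{\hat{1},:})\in\{m-1,m\}$, so the ``full rank \emph{or} structured deficiency'' dichotomy is automatic once $Y_{j_1},\ldots,Y_{j_{m-1}}$ are shown linearly independent. And their independence for a.e.~$\alpha$ follows in one line from the fact that $\{Y_1,Y_{j_1},\ldots,Y_{j_{m-1}}\}$ differs from the basis $\{X_1,X_{j_1},\ldots,X_{j_{m-1}}\}$ in exactly one vector, whose coordinate in the old basis is an affine function of $\alpha$ equal to $1$ at $\alpha=0$. Both routes reach the same conclusion; the paper's buys brevity and uniformity across the sub-cases, while yours makes the mechanism (which rows can ``absorb'' the $X_1$-component and when) more transparent at the price of extra casework.
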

	
	\begin{proof}[Proof of Lemma~\ref{lem: full-rank}]
		By \eqref{inverse-AH}, it holds that $Y_j=X_j$ for $j\neq k$ and that
		\begin{equation*}
			Y_k=\frac{1}{1-\alpha H_{kk}}\left(X_k+\alpha\sum_{j\neq k}H_{k j}X_j\right).
		\end{equation*}
		
		For point 
		(i), we notice that if $k=1$, then $Y_{\hat{1},:}=X_{\hat{1},:}$ has full column-rank. If $k>1$, then it follows from $X_1\in\text{span}\{X_2,\dots,X_d\}$ that $Y_{\hat{1},:}$ also has full column-rank for a.e.~$\alpha$.
		
		For point (ii). We have
		\begin{equation*}
			\text{span}\{X_1,X_{j_1},X_{j_2},\dots,X_{j_{m-1}}\}=\bR^m.
		\end{equation*}
		If $k\in\{1,j_1,j_2,\dots,j_{m-1}\}$, then $\text{span}\{Y_1,Y_{j_1},Y_{j_2},\dots,Y_{j_{m-1}}\}=\bR^m$ holds for $a.e.$ $\alpha$. Therefore, we obtain that $Y_{j_1},Y_{j_2},\dots,Y_{j_{m-1}}$ are linearly independent, which implies that either $Y_{\hat{1},:}$ has full column-rank or 
		\begin{equation*}
			Y_j\in\text{span}\{Y_{j_1},Y_{j_2},\dots,Y_{j_{m-1}}\},\quad\forall\ j\in\{2,3,\dots,n\}.
		\end{equation*}
		If $k\notin\{j_1,j_2,\dots,j_{m-1}\}$, then $Y_{\hat{1},:}$ has full column-rank since $H_{k1}\neq 0$.
	\end{proof}
	
\end{document}